\tikzset{>=stealth'}
\def\arrowLengthDisplayStyle{4ex}
\def\arrowHeightDisplayStyle{.8ex}
\def\arrowSkipDisplayStyle{.5ex}
\def\arrowLengthTextStyle{3ex}
\def\arrowHeightTextStyle{.8ex}
\def\arrowSkipTextStyle{.4ex}
\def\arrowLengthScriptStyle{2.5ex}
\def\arrowHeightScriptStyle{.6ex}
\def\arrowSkipScriptStyle{.3ex}
\def\arrowLengthScriptScriptStyle{2ex}
\def\arrowHeightScriptScriptStyle{.4ex}
\def\arrowSkipScriptScriptStyle{.2ex}
\renewcommand{\to}{\arrow{->}}
\newcommand{\mono}{\arrow{>->}}
\newcommand{\epi}{\arrow{->>}}
\renewcommand{\mapsto}{\arrow{|->}}
\newcommand{\MakeTikzArrowWithSuperscriptSubscript}[4]
\newcommand{\MakeTikzArrowWithCentralLabel}[3]
\def\arrow#1{\def\lastArrowStyle{#1}
             \futurelet\testchar\arrowMaybeStreched}
\def\arrowMaybeStreched{\ifx[\testchar \let\next\arrowStreched
                         \else \let\next\arrowUnstreched \fi
                        \next}
\def\arrowStreched[#1]{\def\lastArrowStrech{#1}
                       \futurelet\testchar\arrowMaybeLabel}
\def\arrowUnstreched{\def\lastArrowStrech{1}
                     \futurelet\testchar\arrowMaybeLabel}
\def\arrowMaybeLabel{\ifx^\testchar \let\next\arrowSuperscript
                      \else \ifx_\testchar \let\next\arrowSubscript
                             \else \ifx~\testchar \let\next\arrowCentralLabel
                                    \else \let\next\arrowNoLabel
                                   \fi
                            \fi
                     \fi
                     \next}
\def\arrowSuperscript^#1{\def\lastArrowSuperscript{#1}
                         \futurelet\testchar\arrowSuperMaybeSub}
\def\arrowSuperMaybeSub{\ifx_\testchar \let\next\arrowSuperscriptSubscript
                         \else \let\next\arrowSuperscriptNoSubscript \fi
                        \next}
\def\arrowSubscript_#1{\def\lastArrowSubscript{#1}
                         \futurelet\testchar\arrowSubMaybeSuper}
\def\arrowSubMaybeSuper{\ifx^\testchar \let\next\arrowSubscriptSuperscript
                         \else \let\next\arrowSubscriptNoSuperscript \fi
                        \next}
\def\arrowSuperscriptSubscript_#1{\def\lastArrowSubscript{#1}
                                  \arrowDrawSupSub}
\def\arrowSuperscriptNoSubscript{\def\lastArrowSubscript{}
                                 \arrowDrawSupSub}
\def\arrowSubscriptSuperscript^#1{\def\lastArrowSuperscript{#1}
                                  \arrowDrawSupSub}
\def\arrowSubscriptNoSuperscript{\def\lastArrowSuperscript{}
                                 \arrowDrawSupSub}
\def\arrowNoLabel{\def\lastArrowSuperscript{}
                  \def\lastArrowSubscript{}
                  \arrowDrawSupSub}
\def\arrowCentralLabel~#1{\MakeTikzArrowWithCentralLabel{\lastArrowStyle}{#1}{\lastArrowStrech}}
\def\arrowDrawSupSub{\MakeTikzArrowWithSuperscriptSubscript{\lastArrowStyle}{\lastArrowSuperscript}{\lastArrowSubscript}{\lastArrowStrech}}
\newtheorem{theorem}{Theorem}
\newtheorem{corollary}[theorem]{Corollary}
\newtheorem{lemma}[theorem]{Lemma}
\newtheorem{proposition}[theorem]{Proposition}
\theoremstyle{definition}
\newtheorem{definition}[theorem]{Definition}
\newtheorem{observation}[theorem]{Observation}
\newtheorem{remark}[theorem]{Remark}
\DeclareMathOperator{\Image}{Im} \renewcommand{\Im}{\Image}
\DeclareMathOperator{\Hom}{Hom}
\DeclareMathOperator{\Ext}{Ext}
\DeclareMathOperator{\modules}{mod} \renewcommand{\mod}{\modules}
\DeclareMathOperator{\add}{add}
\DeclareMathOperator{\pd}{pd}
\DeclareMathOperator{\id}{id}
\DeclareMathOperator{\gldim}{gl.\!dim}
\DeclareMathOperator{\op}{op}
\newcommand{\iso}{\cong}
\renewcommand{\leq}{\leqslant}
\renewcommand{\geq}{\geqslant}
\renewcommand{\iff}{\ensuremath{\Longleftrightarrow}}
\newcommand{\then}{\ensuremath{\Longrightarrow}}
\title{Higher preprojective algebras and stably Calabi-Yau properties}
\author{Claire Amiot}
\address{Institut Fourier, 100 rue des Math\'ematiques, 38402 Saint Martin d'H\`eres, France}
\email{Claire.Amiot@ujf-grenoble.fr}
\author{Steffen Oppermann}
\address{Institutt for matematiske fag, NTNU, 7491 Trondheim, Norway}
\email{steffen.oppermann@math.ntnu.no}
\thanks{Most work on this project was done during visits of the authors to each others universities, funded respectively by Norwegian Research Council project 196600/V30 and the GDR Th{\'e}orie de Lie alg{\'e}brique et g{\'e}om{\'e}trique. The first author is partially supported by the ANR project ANR-09-BLAN-0039-02.}
\thanks{{\em Key words and phrases.} Cohen-Macaulay modules, stable categories, Calabi-Yau categories, preprojective algebras, Calabi-Yau algebras.}
\thanks{2010 {\em Mathematics Subject Classification.} 16E05, 16E35, 16E65, 16G50, 18E30}
\newcommand{\R}{\mathbf{R}\!}
\renewcommand{\L}{\mathbf{L}}
\DeclareMathOperator{\perf}{perf}
\DeclareMathOperator{\proj}{proj}
\DeclareMathOperator{\gr}{gr-\!}
\DeclareMathOperator{\Rad}{Rad}
\DeclareMathOperator{\Jac}{Jac}
\DeclareMathOperator{\CM}{CM}
\newcommand{\Ho}{{\rm H}}
\newcommand{\Ten}{\mathrm{T}}
\newcommand{\start}{\mathtt{s}}
\newcommand{\tail}{\mathtt{t}}
\tikzset{mmn/.style={matrix of math nodes,row sep=10mm, column sep=15mm}}
\numberwithin{equation}{section}
\numberwithin{theorem}{section}
\DeclareMathOperator{\stmod}{\underline{mod}}
\DeclareMathOperator{\stCM}{\underline{CM}}
\DeclareMathOperator{\stgrCM}{\underline{gr-CM}}
\DeclareMathOperator{\stgr}{\underline{gr}-\!}
\date{\today}
\begin{document}
\maketitle
\begin{abstract}
 In this paper, we give sufficient properties for a finite dimensional graded algebra to be a higher preprojective algebra. These properties are of homological nature, they use Gorensteiness and bimodule isomorphisms in the stable category of Cohen-Macaulay modules. We prove that these properties are also necessary for $3$-preprojective algebras using \cite{Kel11} and for preprojective algebras of higher representation finite algebras using \cite{Dugas}.
\end{abstract}

\tableofcontents

\section{Introduction}
Preprojective algebras play an important role in many different parts of mathematics. Such an algebra is associated to a quiver $Q$ without oriented cycles. It has been defined by Gelfand and Ponomarev in the 70's to get a better understanding of the representation theory of the path algebra of the quiver $Q$. Recently, in the context of higher Auslander-Reiten theory, Iyama generalized the definition of preprojective algebras. If $\Lambda$ is a finite dimensional algebra of global dimension $d-1$, its $d$-preprojective algebra $\Pi_d(\Lambda)$ is defined as the tensor algebra $\Ten_{\Lambda}\Ext^{d-1}_{\Lambda^{\rm e}}(\Lambda,\Lambda^{\rm e})$ where $\Lambda^{\rm e}$ is the enveloping algebra $\Lambda\otimes_k\Lambda^{\rm op}$. It is naturally a positively graded algebra. The bimodule $\Ext^{d-1}_{\Lambda^{\rm e}}(\Lambda,\Lambda^{\rm e})$ is the $0$th cohomology group of the inverse of the ``canonical bundle" $\Hom_k(\Lambda,k)[-d+1]$ in the derived category $\mathcal{D}^{\rm b}(\mod \Lambda)$. In the case where $\Lambda$ satisfies some geometrical properties (Fano, quasi Fano,~$\dots$), its preprojective algebra has also been studied in the context of non commutative algebraic geometry (see \cite{Min, Mor, HIO12}).

In this paper, we are interested in the properties that characterize finite dimensional preprojective algebras. For $d=2$, the preprojective algebra $\Pi=\Pi_2(kQ)$ is finite dimensional if and only if $Q$ is a Dynkin quiver and, in that case, by a classical result, the algebra $\Pi$ is selfinjective and there is a functorial isomorphism
\[\Ext^1_\Pi(X,Y)\iso D\Ext^1_\Pi(Y,X)\] for any $X,Y\in\mod\Pi$. So in other words, the triangulated category $\stmod \Pi$ is $2$-Calabi-Yau. The duality above comes from an isomorphism
\[\Hom_{\Pi^{\rm e}}(\Pi,\Pi^{\rm e})\iso \Omega^3_{\Pi^{\rm e}}\Pi(1) \] in the stable category of graded bimodules $\stgr \Pi^{\rm e}$ (where $\Pi(1)$ is the graded bimodule $\Pi$ shifted by $1$). This isomorphism can also be written 
\[\R\Hom_{\Pi^{\rm e}}(\Pi,\Pi^{\rm e})[3]\iso \Pi(1) \qquad \text{in } {\rm D^b}(\gr\Pi^{\rm e})/\gr\perf\Pi^{\rm e}  \]
using the triangle equivalence $\stgr \Pi^{\rm e}\iso {\rm D^b}(\gr\Pi^{\rm e})/\gr\perf\Pi^{\rm e}$. 
The main result of this paper is the following.

\begin{theorem}[Theorem \ref{thm.21}]
Let $\Pi=\bigoplus_{i\geq 0} \Pi_i$ be a finite dimensional graded algebra satisfying the following properties:
\begin{itemize}
\item[$(a)$]
${\rm pdim}D\Pi={\rm idim}\Pi\leq d-2$, that is $\Pi$ is of Gorenstein dimension $\leq d-2$;
\item[$(b)$] $\R\Hom_{\Pi^{\rm e}}(\Pi,\Pi^{\rm e})[d+1]\iso \Pi(1)$ in ${\rm D^b}(\gr\Pi^{\rm e})/\gr\perf\Pi^{\rm e}$;
\item[$(c)$] $\Ext_{\gr \Pi^{\rm e}}^j(\Pi, \Pi^{\rm e}(i)) = 0$ for all $i < 0$ and $j > 0$.
\end{itemize}
Then $\Pi$ is isomorphic as a graded algebra to $\Pi_d(\Lambda)$ for some algebra $\Lambda$ of global dimension at most $d-1$. 
\end{theorem}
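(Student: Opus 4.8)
The candidate is $\Lambda:=\Pi_0$, and I would read the theorem as a reconstruction statement. Both $\Pi$ and $\Pi_d(\Lambda)$ are $\mathbb{N}$-graded algebras with degree-zero part $\Lambda$, and $\Pi_d(\Lambda)=\Ten_\Lambda\Ext^{d-1}_{\Lambda^{\rm e}}(\Lambda,\Lambda^{\rm e})$ is by definition the tensor algebra---hence the ``relation-free'' algebra---on the bimodule $\Ext^{d-1}_{\Lambda^{\rm e}}(\Lambda,\Lambda^{\rm e})$ placed in degree $1$. So the statement decomposes into three tasks: $(i)$ $\gldim\Lambda\leq d-1$, which makes $\Pi_d(\Lambda)$ meaningful; $(ii)$ an isomorphism of $\Lambda$-bimodules $\Pi_1\iso\Ext^{d-1}_{\Lambda^{\rm e}}(\Lambda,\Lambda^{\rm e})$; and $(iii)$ that $\Pi$ is generated in degrees $0$ and $1$ over $\Lambda$ and has no relations among its degree-one generators, equivalently that the graded algebra homomorphism $\Ten_\Lambda\Pi_1\longrightarrow\Pi$ induced by the inclusion $\Pi_1\hookrightarrow\Pi$ is bijective. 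Granting $(i)$, $(ii)$, and $(iii)$, we get $\Pi\iso\Ten_\Lambda\Ext^{d-1}_{\Lambda^{\rm e}}(\Lambda,\Lambda^{\rm e})=\Pi_d(\Lambda)$, as desired.

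The engine behind all three tasks is a close study of a minimal graded projective bimodule resolution $\cdots\longrightarrow P_1\longrightarrow P_0\longrightarrow\Pi\longrightarrow 0$ of $\Pi$ over $\Pi^{\rm e}$. By $(a)$ the algebra $\Pi$ is Iwanaga-Gorenstein of Gorenstein dimension $\leq d-2$, hence so is $\Pi^{\rm e}$ (of Gorenstein dimension $\leq 2d-4$); by Buchweitz's theorem the singularity category ${\rm D^b}(\gr\Pi^{\rm e})/\gr\perf\Pi^{\rm e}$ occurring in $(b)$ is the triangulated stable category $\stgrCM\Pi^{\rm e}$ of graded maximal Cohen-Macaulay $\Pi^{\rm e}$-modules, so $(b)$ becomes a bimodule ``$(d{+}1)$-Calabi-Yau, up to a shift of the grading'' self-duality $\R\Hom_{\Pi^{\rm e}}(\Pi,\Pi^{\rm e})[d+1]\iso\Pi(1)$ in $\stgrCM\Pi^{\rm e}$. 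I would then use $(c)$---the vanishing of $\Ext^{j}_{\gr\Pi^{\rm e}}(\Pi,\Pi^{\rm e}(i))$ for $i<0$ and $j>0$---as a rigidity statement in negative internal degrees that promotes $(b)$ from the stable category to an actual statement about bimodules, and conclude that $P_\bullet$ has exactly the shape of the bimodule resolution of the degree-zero cohomology of a $(d{+}1)$-Calabi-Yau completion in the sense of Keller: in cohomological degrees $0,\dots,d$ it is the mapping cone of two copies of a minimal $\Lambda^{\rm e}$-projective resolution $Q_\bullet\longrightarrow\Lambda$ of length $\leq d-1$---one copy in internal degree $0$, the other tensored on the left with the bimodule $\Pi_1$, hence placed in internal degree shifted by $1$---while in cohomological degrees $>d$ it is forced by $(b)$ to reproduce, up to a shift of cohomological and internal degree, the $\R\Hom_{\Pi^{\rm e}}(-,\Pi^{\rm e})$-dual of this initial segment. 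From this single description all three tasks fall out: the internal-degree-$0$ part of $P_\bullet$ is, since exactness of a graded complex is tested degreewise, a $\Lambda^{\rm e}$-projective resolution of $\Lambda$ of length $\leq d-1$, which gives $(i)$; the top term $P_d\iso\Pi\otimes_\Lambda\Pi_1\otimes_\Lambda Q_{d-1}\otimes_\Lambda\Pi$, identified by means of the self-duality with a twist of $\Pi\otimes_\Lambda\Pi$, forces the $\Lambda$-bimodule isomorphism $\Pi_1\iso\Ext^{d-1}_{\Lambda^{\rm e}}(\Lambda,\Lambda^{\rm e})$ of $(ii)$; and the fact that $P_1$ and $P_2$ have their generators concentrated in internal degrees $0$ and $1$ means that $\Pi$ has generators and relations only in degrees $0$ and $1$, so $\Pi=\Ten_\Lambda\Pi_1$, which is $(iii)$. (Equivalently, the shape of $P_\bullet$ forces every multiplication map $\Pi_1\otimes_\Lambda\Pi_n\longrightarrow\Pi_{n+1}$ to be bijective.)

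The main obstacle is the ``promotion'' step just invoked: a priori hypothesis $(b)$ pins down $\R\Hom_{\Pi^{\rm e}}(\Pi,\Pi^{\rm e})$ only up to a perfect bimodule complex and only up to the non-uniqueness of cones, so the heart of the proof is to show, using $(a)$ and---crucially---the internal-degree vanishing $(c)$, that the possible correction terms are forced to vanish in the relevant internal degrees; that is, that the minimal bimodule resolution genuinely has the predicted $(d{+}1)$-Calabi-Yau-completion shape rather than merely being stably isomorphic to it. A secondary, more technical difficulty is that $\Pi$ is in general not homologically smooth: its bimodule resolution over $\Pi^{\rm e}$ is infinite, reflecting that $\Pi$ is Gorenstein but not regular, so one must carefully isolate the finite ``geometric'' initial segment in cohomological degrees $0,\dots,d$---which carries the global-dimension bound, the identification of $\Pi_1$, and the tensor-algebra structure---from the tail that the Gorenstein condition produces (eventually periodic up to twist), and check that this tail is compatible with, rather than an obstruction to, the conclusion.
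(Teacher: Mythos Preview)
Your decomposition into tasks $(i)$, $(ii)$, $(iii)$ is exactly the one the paper carries out, and your diagnosis that the ``promotion'' of $(b)$ from the stable category to an honest bimodule statement is the crux is correct. The paper, however, does not attempt to pin down the shape of the full minimal bimodule resolution $P_\bullet$ and then read off the three conclusions from its terms; it takes a more indirect but cleaner route that sidesteps the need to know each $P_i$.

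Concretely, the paper works with the semiorthogonal decomposition triangle $\Pi_{\leq 0}\to\Pi\to\Pi_{>0}\to$ in ${\rm D^b}(\gr\Pi^{\rm e})$, splitting the bimodule resolution according to whether its projective generators sit in internal degree $0$ or in positive degree, and then applies $-\otimes^{\L}_\Pi\Lambda$. The promotion step is handled by a Cohen--Macaulay replacement $K\mono M\epi\Pi$ with $M$ Cohen--Macaulay and $K$ of finite projective dimension; condition $(c)$ is exactly what forces the resolution of $K$ to live in $\gr\proj_{\leq 0}\Pi^{\rm e}$, so that $(b)$ becomes the concrete isomorphism $\Hom_{\Pi^{\rm e}}(M,\Pi^{\rm e})(-1)\iso\Omega^{d+1}_{\Pi^{\rm e}}\Pi$ up to projectives in degree $1$. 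With this in hand one shows $\Pi_{\leq 0}\iso\Pi\otimes_\Lambda Q\otimes_\Lambda\Pi$ for $Q$ a $\Lambda^{\rm e}$-resolution of $\Lambda$ (giving $(i)$ after tensoring with $\Lambda$), and a duality computation identifies $\Ho^{-1}(\Pi_{>0}\otimes^{\L}_\Pi\Lambda)\iso\Pi(-1)\otimes_\Lambda\Ext^{d-1}_\Lambda(D\Lambda,\Lambda)$ and $\Ho^0=0$. The long exact sequence of the triangle then yields a short exact sequence of graded $\Pi\otimes\Lambda^{\rm op}$-modules
\[
\Pi(-1)\otimes_\Lambda\Ext^{d-1}_\Lambda(D\Lambda,\Lambda)\mono\Pi\epi\Lambda,
\]
and the paper finishes with an elementary tensor-algebra recognition lemma: any positively graded ring $\Pi$ admitting an isomorphism $\Pi(-1)\otimes_\Lambda X\iso\Pi_{>0}$ of graded $\Pi\otimes\Lambda^{\rm op}$-modules is isomorphic to $\Ten_\Lambda X$. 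This replaces your $(ii)$ and $(iii)$ in one stroke, without ever inspecting $P_1$, $P_2$ or $P_d$ individually.

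Two remarks on your outline. First, the description ``mapping cone of two copies of $Q_\bullet$, one tensored with $\Pi_1$'' is not quite the right shape: the positive-degree half is governed by the $\Lambda^{\rm e}$-dual $Q^\vee$ rather than a second copy of $Q$ (this is what the Calabi--Yau duality produces), so your identification of $P_d$ and the extraction of $(ii)$ from it would need reworking. Second, your argument for $(iii)$ via the internal degrees of $P_1$ and $P_2$ is valid in principle, but the paper's short exact sequence argument gets there without having to establish those degree bounds directly; the recognition lemma only needs the single $\Pi$-$\Lambda$-bimodule isomorphism $\Pi_{>0}\iso\Pi(-1)\otimes_\Lambda X$, which is less information than the full shape of the resolution.
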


Property $(b)$ can be understood as an algebraic (and graded) enhancement of the property stably Calabi-Yau for the algebra $\Pi$. In particular it implies that the stable category of maximal Cohen-Macaulay modules over $\Pi$ is a $d$-Calabi-Yau triangulated category.
Gorensteinness and stably Calabi-Yau property are homological properties that appear naturally in the study of preprojective algebras (see \cite{KR07}). Therefore $(a)$ and $(b)$ are natural hypotesis to consider. Hypotesis $(c)$ becomes also natural when the algebra $\Pi$ has finite global dimension (see Observation~\ref{obs hyp c}).

\medskip
In a second part of the paper, we show that in certain situations also the converse of the above theorem holds. We prove that properties $(a)$, $(b)$ and $(c)$ are satisfied by the finite dimensional preprojective algebras for $d=2$ and $d=3$ (Theorems~\ref{thm.d=2} and \ref{thm.main_QP}).  Moreover, using the results of Dugas in \cite{Dugas}, we prove that properties $(a)$, $(b)$ and $(c)$ holds for selfinjective $d$-preprojective algebras for any $d$. More precisely we prove the following.

\begin{theorem}[Theorems \ref{thm.selfinj} and \ref{thm.RF}]
The map $\Lambda\mapsto \Pi_d(\Lambda)$ gives a one to one correspondence between $(d-1)$-representation finite algebras $\Lambda$ and finite dimensional selfinjective graded algebras $\Pi$ satisfying $\R\Hom_{\Pi^{\rm e}}(\Pi,\Pi^{\rm e})[d+1]\iso \Pi(1)$ in ${\rm D^b}(\gr\Pi^{\rm e})/\gr\perf\Pi^{\rm e}$.
\end{theorem}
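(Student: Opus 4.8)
The plan is to realise the asserted correspondence through the pair of mutually inverse assignments $\Lambda\mapsto\Pi_d(\Lambda)$ and $\Pi\mapsto\Pi_0$, verifying separately that each lands in the right class and then that they compose to the identity.

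\emph{A $(d-1)$-representation finite $\Lambda$ produces a finite dimensional selfinjective graded $\Pi$ satisfying $(b)$.} For such a $\Lambda$ I would first invoke the known characterisation of $(d-1)$-representation finiteness by selfinjectivity of the $d$-preprojective algebra: $\Pi:=\Pi_d(\Lambda)$ is then automatically finite dimensional and selfinjective. The substantive point is property $(b)$, and here the results of \cite{Dugas} are the essential input. Dugas' periodicity theorem yields a twisted periodicity of the minimal graded bimodule resolution of $\Pi$ of period $d+1$, which after unwinding takes the shape $\Omega^{d+1}_{\Pi^{\rm e}}\Pi\iso {}_1\Pi_{\nu^{-1}}(-1)$ in $\stgr\Pi^{\rm e}\iso{\rm D^b}(\gr\Pi^{\rm e})/\gr\perf\Pi^{\rm e}$, where $\nu$ is the graded Nakayama automorphism. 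Independently, since $\Pi$ is graded selfinjective so is $\Pi^{\rm e}$, hence $\Pi^{\rm e}$ is an injective object of $\gr\Pi^{\rm e}$, so $\R\Hom_{\Pi^{\rm e}}(\Pi,\Pi^{\rm e})$ is concentrated in cohomological degree $0$, and a standard graded Frobenius computation identifies the resulting bimodule with ${}_1\Pi_{\nu^{-1}}$ up to an internal shift. Feeding the second identification into $\R\Hom_{\Pi^{\rm e}}(\Pi,\Pi^{\rm e})[d+1]$ and applying the first, the Nakayama twists cancel and one is left with $\Pi(1)$, which is $(b)$.

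\emph{A finite dimensional selfinjective graded $\Pi$ satisfying $(b)$ comes from a $(d-1)$-representation finite algebra.} Here I would check that the two remaining hypotheses of Theorem~\ref{thm.21} are automatic: $(a)$ holds because a selfinjective algebra has Gorenstein dimension $0\leq d-2$, and $(c)$ holds because $\Pi^{\rm e}$ is again (graded) selfinjective, so each $\Pi^{\rm e}(i)$ is an injective object of $\gr\Pi^{\rm e}$ and $\Ext^j_{\gr\Pi^{\rm e}}(-,\Pi^{\rm e}(i))=0$ for all $j>0$ and all $i$, a fortiori for $i<0$. Theorem~\ref{thm.21} then provides a graded algebra isomorphism $\Pi\iso\Pi_d(\Lambda)$ with $\gldim\Lambda\leq d-1$; passing to degree-$0$ parts forces $\Lambda\iso\Pi_0$. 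Finally, $\Pi_d(\Lambda)\iso\Pi$ is selfinjective, so the characterisation used in the previous paragraph shows that $\Lambda$ is $(d-1)$-representation finite.

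It remains to see that the two assignments are mutually inverse: $(\Pi_d(\Lambda))_0=\Lambda$ holds for every $\Lambda$ by the definition of the tensor algebra, while $\Pi_d(\Pi_0)\iso\Pi$ is exactly the output of Theorem~\ref{thm.21} applied as above. This yields the claimed bijection between isomorphism classes of $(d-1)$-representation finite algebras and isomorphism classes of finite dimensional selfinjective graded algebras $\Pi$ with $\R\Hom_{\Pi^{\rm e}}(\Pi,\Pi^{\rm e})[d+1]\iso\Pi(1)$. I expect the real difficulty to be concentrated in the first part: turning Dugas' \emph{twisted} periodicity into the twist-free statement $(b)$, that is, checking that the Nakayama twist is precisely absorbed by the twist inherent in $\R\Hom_{\Pi^{\rm e}}(\Pi,\Pi^{\rm e})$ and that, after this cancellation, the internal grading shift comes out to be exactly $(1)$.
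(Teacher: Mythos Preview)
Your overall architecture---split into the two assignments and check they are mutually inverse---matches the paper, and your $\Pi\rightsquigarrow\Lambda$ direction is essentially the paper's Theorem~\ref{thm.selfinj}: selfinjectivity forces $(a)$ and $(c)$, Theorem~\ref{thm.21} applies, and one concludes $(d-1)$-representation finiteness. (The paper routes this last step through its own Theorem~\ref{thm.gdimU} and the vosnex property together with \cite[Corollary~3.7]{IO13}, rather than citing a biconditional ``$\Pi_d(\Lambda)$ selfinjective $\Leftrightarrow$ $\Lambda$ representation finite'' as a black box, but the content is the same.)

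The $\Lambda\rightsquigarrow\Pi$ direction is where your proposal genuinely diverges, and where the gap you flag is real. You want to extract from Dugas a twisted periodicity $\Omega^{d+1}_{\Pi^{\rm e}}\Pi\iso{}_1\Pi_{\nu^{-1}}(-1)$ and separately identify $\R\Hom_{\Pi^{\rm e}}(\Pi,\Pi^{\rm e})$ with a Nakayama-twisted copy of $\Pi$, then cancel. The paper never mentions the Nakayama automorphism. Instead it applies Dugas' theorem to the cluster-tilting subcategory $\mathscr{U}\subset{\rm D^b}(\mod\Lambda)$, obtaining
\[ \Omega^{d+1}_{\mathscr{U}\otimes\mathscr{U}^{\rm op}}\bigl(\Hom_\mathscr{U}(-,-)\bigr)\iso\Hom_\mathscr{U}(-,-[-d+1]) \]
up to projectives. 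Evaluating at $\Lambda$ and using $\Pi\iso\bigoplus_{i\geq0}\Hom(\Lambda,\mathbb{S}_{d-1}^{-i}\Lambda)$ yields $\Omega^{d+1}_{\Pi^{\rm e}}\Pi\iso\bigoplus_{i\geq0}\Hom(\mathbb{S}\Lambda,\mathbb{S}_{d-1}^{-i+1}\Lambda)$ in $\stgr\Pi^{\rm e}$. Independently, $\Hom_{\Pi^{\rm e}}(\Pi,\Pi^{\rm e})\iso\Hom_\Pi(D\Pi,\Pi)\iso\bigoplus_i\Hom(D\Lambda,\mathbb{S}_{d-1}^{-i}\Lambda)$. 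Since $\mathbb{S}\Lambda=D\Lambda$, these two direct sums coincide after an index shift of exactly one, which \emph{is} the grading shift $(1)$; Observation~\ref{obs.eq_CY} then gives $(b)$.

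The virtue of the paper's route is that both sides live in the same coordinate system---Hom-spaces in ${\rm D^b}(\mod\Lambda)$ indexed by powers of $\mathbb{S}_{d-1}$---so the match, including the degree shift, is read off directly. In your approach the ``twist'' coming out of Dugas is the shift $[-d+1]$ on $\mathscr{U}$, and identifying this with the graded Nakayama automorphism of $\Pi$ (while simultaneously pinning down the internal shift as exactly $(-1)$) is not a formality: carrying it out amounts to redoing the paper's computation in disguise. So the difficulty you isolate is not a bookkeeping detail but the substance of the proof, and your outline does not yet resolve it.
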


This result is very similar to Theorem 4.35 of \cite{HIO12} that asserts that the preprojective construction gives a one to one correspondence between $(d-1)$ representation-infinite algebras $\Lambda$ and homologically smooth algebras $\Pi$ satisfying $\R\Hom_{\Pi^{\rm e}}(\Pi,\Pi^{\rm e})[d]\iso \Pi(1)$ in ${\rm D^b}(\gr\Pi^{\rm e})$.

\medskip

\subsection*{Plan of the paper}
The paper is organized as follows. We start in Section~\ref{section1} with preliminaries on graded Gorenstein algebras, and Cohen-Macaulay modules, and define the notion of bimodule stably $(1)$-twisted $d$-Calabi-Yau algebras. The main result of the paper is proved in Section~\ref{section2}. Section~\ref{section3} gives an interpretation of the Gorenstein dimension of an $d$-preprojective algebra $\Pi_{d}(\Lambda)$ in term of some $\Hom$-vanishing in the category ${\rm D^b}(\mod \Lambda)$. In Section~\ref{section4}, we prove that the converse of Theorem~\ref{thm.21} is true for $d=2$, $d=3$, and for preprojective algebras of $(d-1)$-representation finite algebras.

\medskip
\subsection*{Notation}
 All algebras in this paper are finite dimensional algebras over a field $k$. For an algebra $A$, we denote by $A^{\rm e}$ the tensor algebra $A\otimes A^{\rm op}$. The dual $\Hom_k(A,k)$ of $A$ is denoted by $DA$. When nothing else is stated explicitly, tensor products are over the field $k$.
 
 \medskip
 
\subsection*{Acknowledgement} We thank Alex Dugas for helpful hints about his article \cite{Dugas}.

\section{Preliminaries}\label{section1}
\subsection{Graded algebras and graded modules}
Let $A=\bigoplus_{n\in\mathbb{N}}A_n$ be a positively graded algebra. For a graded $A$-module $M=\bigoplus_{n\in\mathbb{Z}}M_n$, and for any $p\in\mathbb{Z}$, we denote by $M(p)$ the graded module $\bigoplus_{n\in\mathbb{Z}}M_{n+p}$, that is the degree $n$ part of $M(p)$ is $M_{p+n}$.
We denote by $\gr A$ the category of finitely generated graded $A$-modules. Morphisms in $\gr A$ are graded morphisms homogeneous of degree $0$. The category $\gr A$ is an abelian Krull-Schmidt category. 

By an abuse of notation, for $M\in \gr A$ we will denote by $M\in\mod A$ its image in $\mod A$ under the forgetful functor $\gr A\to \mod A$. Note that the $A^{\rm op}$-module $\Hom_A(M,A)$ has a natural structure of graded $A^{\rm op}$-module given by $\bigoplus_{p\in \mathbb{Z}}\Hom_{\gr A}(M,A(p))$. 

\medskip

We write
\begin{align*}
\gr\proj_{\leq 0} A & = \add \{ A(i) \mid i \geq 0 \} \text{, and} \\
\gr\proj_{> 0} A & = \add \{ A(i) \mid i < 0 \}
\end{align*}
for the subcategories of $\gr\proj A$ of projectives generated in positive, respectively in non-positive, degrees. 

For an additive category $\mathcal{A}$, we denote by ${\rm K}^{b,-}(\mathcal{A})$ (resp.${\rm K}^{b}(\mathcal{A})$) the homotopy category of right bounded (resp. bounded) complexes of objects in $\mathcal{A}$.

\begin{proposition} \label{prop.semiorth}
Let $A$ be a positively graded algebra, such that $A_0$ has finite global dimension. Then ${\rm D^b}(\gr A) = {\rm K^{b,-}}(\gr\proj A)$ has a semiorthogonal decomposition
\[ {\rm K^{b,-}}(\gr\proj A) = \left< {\rm K^b}(\gr\proj_{\leq 0} A), {\rm K^{b,-}}(\gr\proj_{> 0} A) \right>. \]
That is, we have $\Hom_{{\rm K^{b,-}}(\gr\proj A)}({\rm K^b}(\gr\proj_{\leq 0} A), {\rm K^{b,-}}(\gr\proj_{> 0} A)) = 0$, and for $X \in {\rm K^{b,-}}(\gr\proj A)$ there is a triangle
\[ \underbrace{X_{\leq 0}}_{\mathclap{\in {\rm K^b}(\gr\proj_{\leq 0} A)}} \to X \to \underbrace{X_{> 0}}_{\mathclap{\in {\rm K^{b,-}}(\gr\proj_{> 0} A)}} \to \]
\end{proposition}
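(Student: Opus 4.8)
The plan is to establish the two ingredients of a semiorthogonal decomposition — the $\Hom$-vanishing, and for each $X$ a triangle $X_{\leq 0}\to X\to X_{>0}\to$ with $X_{\leq 0}\in{\rm K^b}(\gr\proj_{\leq 0} A)$ and $X_{>0}\in{\rm K^{b,-}}(\gr\proj_{> 0} A)$ — working throughout in the model ${\rm D^b}(\gr A)={\rm K^{b,-}}(\gr\proj A)$, which is available because $\gr A$ has enough projectives.

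\emph{The $\Hom$-vanishing.} Here the whole point is the computation $\Hom_{\gr A}(A(i),A(j))=A_{j-i}$, which is $0$ as soon as $i\geq 0>j$ since $A$ is positively graded; hence $\Hom_{\gr A}(P,Q)=0$ for all $P\in\gr\proj_{\leq 0} A$, $Q\in\gr\proj_{> 0} A$. For complexes $U\in{\rm K^b}(\gr\proj_{\leq 0} A)$ and $V\in{\rm K^{b,-}}(\gr\proj_{> 0} A)$ I would use that $U$, being bounded, is a finite iterated extension of shifts of its terms, so that $\Hom_{{\rm D^b}(\gr A)}(U,V)=\Hom_{{\rm K}(\gr\proj A)}(U,V)$ is built out of the groups $\Hom_{{\rm K}(\gr\proj A)}(U^i[-i],V)\cong\Hom_{\gr A}(U^i,H^{-i}(V))$, the isomorphism being the standard fact that $\Hom_{{\rm K}}(P,W)\cong\Hom_{\gr A}(P,H^0(W))$ for $P$ projective. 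Each such group vanishes: the terms of $V$, hence all of $H^\bullet(V)$, are concentrated in internal degrees $\geq 1$, while $U^i$ is generated in degrees $\leq 0$.

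\emph{The triangle.} Represent $X$ by a minimal complex $P^\bullet$ of finitely generated graded projectives and split $P^n=P^n_{\leq 0}\oplus P^n_{> 0}$ according to which indecomposable summands are generated in degrees $\leq 0$, resp.\ in degrees $>0$. Since there are no nonzero morphisms from $\gr\proj_{\leq 0} A$ to $\gr\proj_{> 0} A$, the differential of $P^\bullet$ is block triangular, so that $P^\bullet_{\leq 0}$ is a subcomplex with quotient $P^\bullet_{> 0}$; the resulting degreewise split short exact sequence of complexes yields a triangle $P^\bullet_{\leq 0}\to P^\bullet\to P^\bullet_{> 0}\to$. Now $P^\bullet_{> 0}$ is right bounded with terms in $\gr\proj_{> 0} A$, and once $P^\bullet_{\leq 0}$ is known to be bounded the long exact sequence forces $P^\bullet_{> 0}$ to have bounded cohomology, hence to lie in ${\rm K^{b,-}}(\gr\proj_{> 0} A)$; together with the $\Hom$-vanishing this is exactly the asserted decomposition. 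So everything comes down to proving that $P^\bullet_{\leq 0}$ is genuinely bounded, i.e.\ that $P^n_{\leq 0}=0$ for $|n|\gg 0$, and this is the only step that uses the hypothesis $\gldim A_0<\infty$. (It really fails without it: for $A=A_0=k[t]/(t^2)$ and $X$ the simple module, $P^\bullet$ is infinite and lies entirely in $\gr\proj_{\leq 0} A$.)

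\emph{Boundedness of $P^\bullet_{\leq 0}$ — the main obstacle.} Put $g=\gldim A_0$. The key lemma I would prove is: if $N\in\gr A$ is concentrated in internal degrees $\geq p$, then $\Omega^{g+1}_A N$ is concentrated in internal degrees $\geq p+1$. Indeed a projective cover of $N$ is generated in degrees $\geq p$, and its degree-$p$ component is a projective cover over $A_0$ of the $A_0$-module $N_p$ — the part of $\Rad A$ acting in degree $p$ reduces to $\Rad(A_0)\,N_p$ because $A_{\geq 1}N$ lies in degrees $\geq p+1$ — so $(\Omega_A N)_p=\Omega_{A_0}(N_p)$, whence $(\Omega^k_A N)_p=\Omega^k_{A_0}(N_p)$ and $\Omega^{g+1}_{A_0}(N_p)=0$. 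Iterating this starting from $A/\Rad A$ (and symmetrically for right modules) shows that in a minimal graded projective resolution $R^\bullet$ of the right module $A/\Rad A$ the term $R^{-j}$ is generated, hence lives, in internal degrees $\geq\lfloor j/(g+1)\rfloor$. Finally, since $P^\bullet$ is minimal, the summand $A(i)$ with $i\geq 0$ occurs in $P^n$ precisely when $\operatorname{top}(P^n)=H^n\bigl((A/\Rad A)\otimes^{\L}_A X\bigr)$ has a nonzero component in internal degree $-i\leq 0$; computing $(A/\Rad A)\otimes^{\L}_A X$ as the total complex $R^\bullet\otimes_A M^\bullet$ for a bounded complex $M^\bullet$ of finitely generated graded modules representing $X$, its cohomological-degree-$n$ term is concentrated in internal degrees which tend to $+\infty$ as $n\to-\infty$ (by the degree estimate on $R^{-j}$ and the bounded internal width of $M^\bullet$), hence lies in internal degrees $>0$ for $n\ll 0$. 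Applying the exact truncation to internal degrees $\leq 0$ and passing to cohomology — the differential of $\operatorname{top}(P^\bullet)$ vanishing by minimality — gives $\operatorname{top}(P^n)_{\leq 0}=0$, so $P^n_{\leq 0}=0$, for $n\ll 0$. The delicate point, and the one genuinely requiring $\gldim A_0<\infty$, is precisely this key lemma and the reduction of boundedness of $P^\bullet_{\leq 0}$ to the internal-degree growth of the syzygies of $A/\Rad A$; the remainder is bookkeeping with the grading.
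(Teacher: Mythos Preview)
Your proof is correct and follows the same approach as the paper: establish the termwise $\Hom$-vanishing from positive gradedness, split each term of a projective complex into its $\leq 0$ and $>0$ parts to get the triangle, and then invoke $\gldim A_0<\infty$ for the boundedness of $X_{\leq 0}$. The paper dispatches this last step in a single sentence (``since $A_0$ has finite global dimension, we may assume that only finitely many terms of the complex are not generated in positive degrees''), whereas you supply a genuine argument via the degree growth of syzygies --- so your write-up is in fact more complete on the one nontrivial point.
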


\begin{remark}
It follows that the triangle in Proposition~\ref{prop.semiorth} is functorial in $X$. We will denote by $(-)_{> 0}$ and $(-)_{\leq 0}$ the functors in the triangle.

This semi-orthogonal decomposition has already been used in \cite{Orl05}.
\end{remark}

\begin{proof}[Proof of Proposition~\ref{prop.semiorth}]
Since $A$ is positively graded the space $\Hom_{\gr A}(A(i), A(j)) $ vanishes whenever $j < i$. It follows that $$\Hom_{\gr\proj A}(\gr\proj_{\leq 0} A, \gr\proj_{> 0} A) =0,$$ and thus we have the $\Hom$-vanishing of the proposition.

To obtain the triangle, we observe that any graded projective is the direct sum of a graded projective generated in non-positive degree and a graded projective generated in positive degree. Choosing such decompositions for all terms of a right bounded complex $X$ of graded projective $A$-modules gives rise to a short exact sequence $X_{\leq 0}\mono X\epi X_{>0}$ and thus to a triangle in the homotopy category, where $X_{\leq 0}\in{\rm K^{b,-}}(\gr \proj_{\leq 0} A)$ and $X_{> 0}\in{\rm K^{b,-}}(\gr \proj_{> 0} A)$. Finally observe that, since $A_0$ has finite global dimension, we may assume that only finitely many terms of the complex are not generated in positive degrees, that is $X_{\leq 0}$ is also left bounded.
\end{proof}

\subsection{Graded Cohen-Macaulay modules}
\begin{definition}
An algebra $A$ is said to be \emph{Gorenstein} if its injective dimension (denoted $\id A$) and the projective dimension of its dual (denoted $\pd DA$) are both finite. 
For such an algebra, the \emph{Gorenstein dimension of $A$} is the integer $\id A=\pd A$. We define the category of \emph{(maximal) Cohen-Macaulay} modules by:
 \[\CM(A):=\{ X\in \mod A\  |\  \Ext^i_A(X,A)=0\ \textrm{for } i>0\}\]
If moreover $A$ is a graded algebra we denote by 
\[\gr\CM (A):=\{ X\in \gr A\  |\  \Ext^i_A(X,A)=0\ \textrm{for } i>0\}\]
the category of graded (maximal) Cohen-Macaualy $A$-modules.
\end{definition}

The next result is the graded version of a famous triangle equivalence \cite[Theorem 4.4.1]{Buc87} (see also \cite{KV87, Ric89}).

\begin{theorem}
Let $A$ be a graded Gorenstein algebra, then $\gr\CM(A)$ is a Frobenius category and there is a triangle equivalence
\[{\rm D^b}(\gr A)/\gr\perf A\to^\sim \stgrCM(A).\]
\end{theorem}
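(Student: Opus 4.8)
The statement breaks into the claim that $\gr\CM(A)$ is a Frobenius exact category and the claim that its stable category is the singularity category $\mathcal{D}/\mathcal{P}$, where I write $\mathcal{D}={\rm D^b}(\gr A)$, $\mathcal{P}=\gr\perf A={\rm K^b}(\gr\proj A)$, and $d=\id A$ for the Gorenstein dimension; the second claim is the graded analogue of Buchweitz's theorem and I would follow his proof. For the first, give $\gr\CM(A)$ the exact structure whose conflations are the short exact sequences in $\gr A$ with all three terms in $\gr\CM(A)$. The long exact sequence of $\Ext_A^*(-,A)$ shows $\gr\CM(A)$ is closed under extensions and under kernels of admissible epimorphisms; in particular $\gr\proj A\subseteq\gr\CM(A)$, and since each Cohen--Macaulay module admits an admissible epimorphism from a graded projective with Cohen--Macaulay kernel, there are enough projectives. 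As $A$ is Gorenstein, $\Hom_A(-,A)$ restricts to a duality $\gr\CM(A)\leftrightarrow\gr\CM(A^{\op})$ with $M^{\vee\vee}\iso M$; dualizing a surjection from a graded projective onto $M^{\vee}$ gives an admissible monomorphism $M\hookrightarrow Q$ into a graded projective whose cokernel is again Cohen--Macaulay, so there are enough injectives. Finally $\Ext_A^1(M,Q)=0$ for $M\in\gr\CM(A)$ and $Q$ graded projective forces any conflation $0\longrightarrow Q\longrightarrow E\longrightarrow M\longrightarrow 0$ to split; hence the projective objects, the injective objects and $\gr\proj A$ coincide, $\gr\CM(A)$ is Frobenius, and $\stgrCM(A)$ is triangulated with suspension $\Sigma=\Omega^{-1}$ and triangles induced by conflations.

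\noindent\textbf{Step 2: the comparison functor.} Here $\mathcal{P}$ is a thick subcategory of $\mathcal{D}$. I would define $F\colon\stgrCM(A)\longrightarrow\mathcal{D}/\mathcal{P}$ by viewing a Cohen--Macaulay module as a complex concentrated in degree $0$; this is well defined on morphisms since a homomorphism through a projective module factors through an object of $\mathcal{P}$ and so vanishes in $\mathcal{D}/\mathcal{P}$. A conflation $0\longrightarrow X\longrightarrow Y\longrightarrow Z\longrightarrow 0$ in $\gr\CM(A)$ yields a triangle $X\longrightarrow Y\longrightarrow Z\longrightarrow X[1]$ in $\mathcal{D}$, hence in $\mathcal{D}/\mathcal{P}$; applying this to $0\longrightarrow X\longrightarrow Q\longrightarrow\Omega^{-1}X\longrightarrow 0$ with $Q$ projective, together with $Q\iso 0$ in $\mathcal{D}/\mathcal{P}$, produces a natural isomorphism $F\Sigma\iso[1]F$. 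A routine comparison of connecting morphisms then makes $F$ a triangle functor, and the equivalence asserted in the theorem will be a quasi-inverse of $F$.

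\noindent\textbf{Step 3: essential surjectivity by truncation.} Since $A$ is finite-dimensional, each $Z\in\mathcal{D}$ is represented by a right-bounded complex $(Z^i,d^i)$ of graded projectives with cohomology concentrated in degrees $\geq a$. For $n$ large, the brutal truncation $\sigma^{\leq-n}Z$ has cohomology concentrated in degree $-n$, namely $M_n:=\Im d^{-n}$; inspecting the complex in degrees $<-n$ exhibits $M_n$ as the $(n+a-1)$-fold syzygy of a module, so once $n\geq d-a+1$ it is a $d$-th syzygy and hence lies in $\gr\CM(A)$ because $\id A=d$. The brutal-truncation triangle $\sigma^{\leq-n}Z\longrightarrow Z\longrightarrow\sigma^{>-n}Z\longrightarrow$ has $\sigma^{>-n}Z\in\mathcal{P}$, so in $\mathcal{D}/\mathcal{P}$ we get $Z\iso\sigma^{\leq-n}Z\iso M_n[n]\iso F(\Omega^{-n}M_n)$. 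Thus $F$ is essentially surjective.

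\noindent\textbf{Step 4: full faithfulness --- the main obstacle.} Fix $X,Y\in\gr\CM(A)$. Faithfulness is short: if $f\colon X\longrightarrow Y$ is zero in $\mathcal{D}/\mathcal{P}$ it factors in $\mathcal{D}$ as $X\xrightarrow{\alpha}C\xrightarrow{\beta}Y$ through some $C=(C^i,d^i)\in\mathcal{P}$; since $\Ext_A^{>0}(X,A)=0$, the hyper-$\Ext$ spectral sequence degenerates and represents $\alpha$ by an actual homomorphism $\alpha^0\colon X\longrightarrow C^0$ with $d^0\alpha^0=0$, while $f=\beta\alpha$ is then represented by $\beta^0\alpha^0$ for a homomorphism $\beta^0\colon C^0\longrightarrow Y$, so $f$ factors through the projective module $C^0$ and is zero in $\stgrCM(A)$. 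Fullness is the genuinely delicate point, and I would argue exactly as Buchweitz does for \cite[Theorem~4.4.1]{Buc87}: the task is to straighten an arbitrary roof $X\xleftarrow{s}W\xrightarrow{g}Y$ with $\mathrm{cone}(s)\in\mathcal{P}$ into a morphism of the form $F(\underline{h})$. The obstruction to $g$ factoring through $s$ in $\mathcal{D}$ lies in $\Hom_{\mathcal{D}}(\mathrm{cone}(s)[-1],Y)$, and the key input is the vanishing $\Ext_{\gr A}^i(X,N)=0$ for all $i\geq1$ and all $N$ of finite projective dimension --- obtained by filtering a bounded complex of graded projectives by its terms and using the defining $\Ext$-vanishing of $\gr\CM(A)$ --- which, combined with the truncation of Step 3, lets one refine $s$ until the obstruction dies. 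I expect this roof-straightening to absorb essentially all of the work; the grading contributes nothing beyond bookkeeping of internal degrees. Granting it, $F$ is a triangle equivalence, and its quasi-inverse is the asserted equivalence ${\rm D^b}(\gr A)/\gr\perf A\xrightarrow{\sim}\stgrCM(A)$.
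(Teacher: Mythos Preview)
The paper does not prove this theorem: it states it as the graded version of Buchweitz's equivalence, citing \cite[Theorem~4.4.1]{Buc87} (and \cite{KV87, Ric89}) and moves on. Your outline follows exactly the Buchweitz argument the paper is pointing to, so there is nothing to compare --- you have supplied the proof the paper chose to omit. The steps are correct; the one place to be a little more careful is the faithfulness argument in Step~4, where the representative $\alpha^0$ of $\alpha$ is only well defined modulo maps of the form $d^{-1}h$ for $h\colon X\to C^{-1}$, but since $\beta^0 d^{-1}=0$ this ambiguity disappears in the composite $\beta^0\alpha^0$, so the conclusion stands.
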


Because of this equivalence, we will use the notation $\stgrCM(A)$ for the category ${\rm D^b}(\gr A)/\gr\perf (A)$. That is we may write $M\iso N$ in $\stgrCM(A)$ even if $M$ and $N$ are not Cohen-Macaualy modules.

\medskip
The next two lemmas are classical results on maximal Cohen-Macaulay modules that will be used in this paper.

\begin{lemma}\label{lemma_syzygyCM}
Let $A$ be a graded Gorenstein algebra of dimension $g$. For $X\in\gr A$, its $g$-th syzygy $\Omega^g(X)$ is Cohen-Macaulay.
\end{lemma}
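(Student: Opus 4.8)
The statement to prove is: if $A$ is a graded Gorenstein algebra of Gorenstein dimension $g$, then for any $X \in \gr A$ the $g$-th syzygy $\Omega^g(X)$ lies in $\gr\CM(A)$, i.e. $\Ext^i_A(\Omega^g(X), A) = 0$ for all $i > 0$.

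The plan is to argue by dimension shift together with the characterization of $\gr\CM(A)$ via finiteness of injective dimension of $A$. First I would fix a minimal graded projective resolution $\cdots \to P_1 \to P_0 \to X \to 0$ and let $\Omega^j(X)$ denote the $j$-th syzygy, so that we have short exact sequences $0 \to \Omega^{j+1}(X) \to P_j \to \Omega^j(X) \to 0$ in $\gr A$. Applying $\Hom_A(-,A)$ to each such sequence and using that $P_j$ is projective yields the dimension-shift isomorphisms $\Ext^{i}_A(\Omega^{j}(X), A) \iso \Ext^{i+1}_A(\Omega^{j-1}(X), A)$ for all $i \geq 1$. Iterating $g$ times gives, for $i \geq 1$,
\[ \Ext^i_A(\Omega^g(X), A) \iso \Ext^{i+g}_A(X, A). \]
So it remains to show $\Ext^{i+g}_A(X,A) = 0$ for every $i \geq 1$, i.e. $\Ext^{n}_A(X,A) = 0$ for all $n > g$.

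For this I would invoke the Gorenstein hypothesis: $\id A = g$ means $A$, viewed as a module over itself, has injective dimension $g$, so there is an injective coresolution $0 \to A \to I^0 \to \cdots \to I^g \to 0$ of length $g$. Computing $\Ext^n_A(X, A)$ from this coresolution shows it vanishes for $n > g$ — this is the standard fact that $\Ext^n_A(-, A)$ can be bounded by $\id A$. (One must be mildly careful that everything is taking place in $\gr A$; but the graded injective dimension of $A$ equals its ungraded one since $A$ is a finite-dimensional algebra, and graded Ext agrees with the degreewise pieces of ungraded Ext, so the vanishing transfers.) Combining this with the displayed isomorphism gives $\Ext^i_A(\Omega^g(X), A) = 0$ for all $i > 0$, which is exactly the assertion that $\Omega^g(X) \in \gr\CM(A)$.

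The only genuinely delicate point is bookkeeping between the graded and ungraded settings — making sure that "syzygy", "injective dimension", and "$\Ext^i_A(-,A)=0$" in the definition of $\gr\CM(A)$ are all compatible with the forgetful functor $\gr A \to \mod A$. Since $A$ is finite-dimensional and graded resolutions by finitely generated graded projectives exist and restrict to ordinary projective resolutions, and since the injective dimension is finite and equal to $g = \id A = \pd A$ by the definition of Gorenstein dimension, this compatibility is routine; the homological dimension-shift argument above is then immediate and is the core of the proof.
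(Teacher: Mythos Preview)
Your proof is correct and is precisely the standard dimension-shift argument for this classical fact. The paper does not actually supply a proof of Lemma~\ref{lemma_syzygyCM}: it is stated as one of two ``classical results on maximal Cohen-Macaulay modules'' and left unproved, so there is no paper proof to compare against; your argument is exactly what one would expect to fill in.
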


\begin{lemma} \label{lemma.CM_replace1}
Let $A$ be a graded Gorenstein ring, $M$ a graded $A$-module. Then there is a short exact sequence
\[ K \mono \prescript{}{\rm CM} M \epi M \]
with $\pd K < \infty$ and $\prescript{}{\rm CM} M \in \gr\CM A$.

In this situation $\prescript{}{\rm CM} M$ is called a \emph{Cohen Macaulay replacement} of $M$.
\end{lemma}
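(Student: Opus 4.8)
\emph{Plan of proof.} This is the graded Gorenstein case of the classical existence of maximal Cohen--Macaulay approximations (Auslander--Buchweitz). Write $g := \id A = \pd DA$ for the Gorenstein dimension. The plan is first to reduce to producing a single good morphism, and then to construct it.

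\emph{Reduction.} It suffices to produce a graded module epimorphism $f\colon \prescript{}{\rm CM}M \epi M$ with $\prescript{}{\rm CM}M \in \gr\CM A$ which becomes an \emph{isomorphism} in the Verdier quotient $\stgrCM(A) = {\rm D^b}(\gr A)/\gr\perf A$. Indeed, granting such an $f$, set $K := \ker f$; the short exact sequence $K \mono \prescript{}{\rm CM}M \epi M$ gives a triangle $K \to \prescript{}{\rm CM}M \xrightarrow{f} M \to K[1]$ in ${\rm D^b}(\gr A)$, and since $f$ is invertible in $\stgrCM(A)$ we get $K \iso 0$ in $\stgrCM(A)$, i.e.\ $K \in \gr\perf A$. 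But a genuine graded module lying in $\gr\perf A$ is quasi-isomorphic to a bounded complex of graded projectives, hence has a finite minimal projective resolution; thus $\pd K < \infty$, as required. (If $g = 0$ then $A$ is selfinjective, every module is Cohen--Macaulay, and one takes $\prescript{}{\rm CM}M = M$, $K = 0$; so assume $g \geq 1$.)

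\emph{Construction of a Cohen--Macaulay model of $M$.} Take a projective resolution $\cdots \to P_1 \to P_0 \to M \to 0$ and put $N := \Omega^g M$. By Lemma~\ref{lemma_syzygyCM}, $N \in \gr\CM A$, and $0 \to N \to P_{g-1} \to \cdots \to P_0 \to M \to 0$ is exact. Since $\gr\CM A$ is a Frobenius category whose projective--injective objects are exactly the graded projective $A$-modules (Theorem above), $N$ admits an ``injective'' coresolution of length $g$ inside $\gr\CM A$: an exact sequence $0 \to N \to I^0 \to \cdots \to I^{g-1} \to X \to 0$ with each $I^j$ graded projective and each cosyzygy --- in particular $X$ --- automatically in $\gr\CM A$. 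Passing to $\stgrCM(A)$, where bounded complexes of graded projectives vanish, the first exact sequence gives $M \iso N[g]$ and the second gives $X \iso N[g]$, so $X \iso M$ in $\stgrCM(A)$.

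\emph{Producing the epimorphism.} It remains to realise the isomorphism $X \iso M$ of the previous step by an honest module epimorphism $\prescript{}{\rm CM}M \epi M$ (with $\prescript{}{\rm CM}M$ a Cohen--Macaulay module, obtained from $X$ by possibly adding a projective summand to make the map surjective). One does this by splicing the two exact sequences above along their common term $N$ via an iterated pushout. In the base case --- when $N$ is reached after a single syzygy, e.g.\ $g \leq 1$ --- the pushout of $N \hookrightarrow P_0$ along $N \hookrightarrow I^0$ is a module $E$ sitting in $0 \to P_0 \to E \to I^0/N \to 0$ and in $0 \to I^0 \to E \to M \to 0$; since $I^0/N$ is Cohen--Macaulay and $P_0$ is projective, $E$ is Cohen--Macaulay (an extension of a Cohen--Macaulay module by a projective is Cohen--Macaulay), and the second sequence is the desired approximation, with projective --- hence finite projective dimension --- kernel. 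The general case runs the same mechanism $g$ times. The main obstacle is precisely this iteration: one must carry out the successive pushouts so that the accumulated error term stays of finite projective dimension at every stage; this is the content of the Auslander--Buchweitz construction, which one may alternatively cite directly. Once $f$ is obtained, the Reduction step concludes the proof.
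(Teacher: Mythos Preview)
Your argument is correct and is, at bottom, the same Auslander--Buchweitz construction the paper carries out: take a high enough syzygy $N=\Omega^g M$ (which is Cohen--Macaulay), coresolve it step by step inside the Frobenius category $\gr\CM A$, and splice this coresolution against the projective resolution of $M$ by iterated pushouts. The paper simply packages this as a single downward induction on $i$: given a CM approximation $K_i \mono \prescript{}{\rm CM}(\Omega^i M) \epi \Omega^i M$, take a left projective approximation of $\prescript{}{\rm CM}(\Omega^i M)$, pass to the cokernel, and map compatibly (after enlarging) to $P_{i-1}$ to obtain the next step. Your $I^j$'s are exactly these left projective approximations, and your pushout is the compatibility square.

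Two minor remarks. First, your ``Reduction'' step is logically superfluous: the pushout construction already exhibits the kernel as an iterated extension of projectives, hence of finite projective dimension, so you never need to invoke the singularity category to conclude. Second, your phrase ``runs the same mechanism $g$ times'' hides the one point that needs care --- after the first pushout the CM module $E_1$ is no longer a cosyzygy of $N$ in the original $I$-coresolution, so you must take a fresh left projective approximation of $E_1$ rather than reuse $I^1$; this is precisely what the paper's inductive formulation makes explicit. Citing Auslander--Buchweitz is of course a legitimate alternative.
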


\begin{proof}
This fact is well-known, but we include a sketch of the proof for the
convenience of the reader.

Sufficiently high syzygies of $M$ are Cohen-Macaulay by Lemma~\ref{lemma_syzygyCM}, so the claim
trivially holds for them. Assume we already found the upper sequence
in the diagram below.
\[ \begin{tikzpicture}
 \matrix (D) [mmn] {
  K_i & \prescript{}{\rm CM}(\Omega^i M) & \Omega^i M \\
  & \bar{P}_{i-1} & P_{i-1} \\
  K_{i-1} & \prescript{}{\rm CM}(\Omega^{i-1} M) & \Omega^{i-1} M \\
 };
 \draw (D-1-1) edge [>->] (D-1-2);
 \draw (D-1-2) edge [->>] (D-1-3)
                         edge [>->] node[left] {\small approx.} (D-2-2);
 \draw (D-1-3) edge [>->] (D-2-3);
 \draw (D-2-2) edge [dashed, ->] (D-2-3)
                         edge [->>] (D-3-2);
 \draw (D-2-3) edge [->>] (D-3-3);
 \draw (D-3-1) edge [>->, dashed] (D-3-2);
 \draw (D-3-2) edge [->, dashed] (D-3-3);
\end{tikzpicture} \]
The right vertical sequence is the defining sequence of $\Omega^i M$,
and the middle one is obtained by taking a left projective
approximation of $\prescript{}{\rm CM}(\Omega^i M)$ and its
cokernel. We obtain an induced map as indicated by the upper dashed
arrow, which can be assumed to be split epi (by enlarging $\bar P_{i-1}$
if necessary). Then we obtain the desired sequence in the lower row of
the diagram. Indeed, $K_{i-1}$ has finite projective dimension since both the kernel of $\bar P_{i-1}\to P_{i-1}$ and $K_i$ do.
\end{proof}

\begin{remark} \label{rem.CM_repl_proj}
The proof above also shows that, provided $M$ is generated in degree
$0$, we may choose $\prescript{}{\rm CM}M$ to only have projective
summands which are generated in degree $0$.
\end{remark}

\begin{lemma} \label{lemma.CM_replace2}
In the setup of Lemma~\ref{lemma.CM_replace1} the following are equivalent:
\begin{enumerate}
\item $\forall j > 0 \; \forall i < 0 \colon \Ext_{\gr A}^j(M, A(i)) = 0$, and
\item $K$ can be chosen such that it has a projective resolution in $\gr\proj_{\leq 0} A$.
\end{enumerate}
\end{lemma}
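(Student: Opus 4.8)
The plan is to prove the equivalence by analysing how the short exact sequence $K \mono \prescript{}{\rm CM} M \epi M$ interacts with the semiorthogonal decomposition of Proposition~\ref{prop.semiorth}. The key observation is that, if $P_\bullet \to M$ is a minimal (or at least degree-$0$-generated) projective resolution, then the vanishing condition $(1)$ is precisely the statement that $\Hom_{{\rm D^b}(\gr A)}(M, A(i)[j]) = 0$ for all $i < 0$, $j > 0$; since $A(i) \in \gr\proj_{> 0} A$ exactly when $i < 0$, this says that $M$, viewed in ${\rm D^b}(\gr A)$, has no positive-degree (shifted) $\Hom$ into the subcategory ${\rm K^{b,-}}(\gr\proj_{>0} A)$. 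I would first make this reformulation precise, using that $M$ is concentrated in degree $0$ in homological degree $0$ so that negative self-extensions play no role.

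For the implication $(1) \Rightarrow (2)$: by Remark~\ref{rem.CM_repl_proj} we may assume $\prescript{}{\rm CM} M$ has only summands generated in degree $0$, so $\prescript{}{\rm CM} M \in \gr\proj_{\leq 0} A$ up to the syzygy business; more precisely its minimal projective resolution, being a resolution of a Cohen--Macaulay module generated in degree $0$, can be arranged to lie in $\gr\proj_{\leq 0}A$. Then $K$, being the kernel, fits in the triangle $K \to \prescript{}{\rm CM} M \to M \to$, and $K$ has finite projective dimension, so $K \in {\rm K^b}(\gr\proj A) = \gr\perf A$. I want to show $K \in {\rm K^b}(\gr\proj_{\leq 0}A)$. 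Apply the functors $(-)_{>0}$ and $(-)_{\leq 0}$ of Proposition~\ref{prop.semiorth} to this triangle. Since $M$ satisfies the $\Hom$-vanishing $(1)$, the component $M_{>0}$ — which is built from $M$ by a triangle $M_{\leq 0} \to M \to M_{>0} \to$ with $\Hom(M_{\leq 0}, M_{>0}) $-type vanishing forcing it — must actually be acyclic after one checks that the only obstruction to $M$ lying in ${\rm K^b}(\gr\proj_{\leq 0}A)$ is the vanishing in $(1)$. Concretely: $M \in {\rm K^b}(\gr\proj_{\leq 0} A)$ if and only if $\Hom(M, {\rm K^{b,-}}(\gr\proj_{>0}A)) = 0$, by the semiorthogonal decomposition, and that $\Hom$-group is computed by the $\Ext$-groups in $(1)$ (shifted). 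Hence $M \in {\rm K^b}(\gr\proj_{\leq 0}A)$, and since $\prescript{}{\rm CM} M$ is there too, $K$ lies in the triangulated subcategory ${\rm K^b}(\gr\proj_{\leq 0}A)$; being a perfect complex of finite projective dimension one then extracts an honest resolution in $\gr\proj_{\leq 0}A$.

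For the converse $(2) \Rightarrow (1)$: if $K$ has a projective resolution in $\gr\proj_{\leq 0}A$, then $K \in {\rm K^b}(\gr\proj_{\leq 0}A)$, hence $\Hom_{{\rm D^b}(\gr A)}(K, A(i)[j]) = 0$ for all $i<0$ and all $j$. Also $\prescript{}{\rm CM} M \in \gr\CM A$ gives $\Ext^j_{\gr A}(\prescript{}{\rm CM}M, A(i)) = 0$ for $j > 0$ and \emph{all} $i$ by definition of Cohen--Macaulay. Feeding the triangle $K \to \prescript{}{\rm CM} M \to M \to$ into the long exact sequence $\Hom(-,A(i)[j])$ then yields $\Ext^j_{\gr A}(M, A(i)) = 0$ for $j > 0$, $i < 0$, which is $(1)$.

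\textbf{Main obstacle.} The delicate point is the identification, in the $(1)\Rightarrow(2)$ direction, of the $\Hom$-vanishing condition $(1)$ with membership of $M$ in ${\rm K^b}(\gr\proj_{\leq 0}A)$ via the semiorthogonal decomposition — one must be careful that $\Hom_{{\rm D^b}(\gr A)}(M, A(i)[j])$ for $j \le 0$ is automatically zero (for $j < 0$ because $M$ is a module and $A(i)$ a module, for $j = 0$ because $i < 0$ and $A$ is positively graded), so that the full vanishing $\Hom(M, {\rm K^{b,-}}(\gr\proj_{>0}A)[\ast]) = 0$ really does follow from $(1)$ alone and not more. The rest is bookkeeping with long exact sequences and the Cohen--Macaulay definition.
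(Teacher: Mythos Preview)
Your direction $(2)\Rightarrow(1)$ is correct and essentially identical to the paper's: both use the long exact sequence coming from $K\mono\prescript{}{\rm CM}M\epi M$, together with the Cohen--Macaulay vanishing for $\prescript{}{\rm CM}M$ and the vanishing for $K$ coming from its resolution in $\gr\proj_{\leq 0}A$.

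The direction $(1)\Rightarrow(2)$, however, has a genuine gap. You aim to show that $M$ and $\prescript{}{\rm CM}M$ both lie in ${\rm K^b}(\gr\proj_{\leq 0}A)$ and then conclude the same for $K$. But membership in ${\rm K^b}(\gr\proj_{\leq 0}A)$ forces \emph{finite projective dimension}, which neither $M$ nor $\prescript{}{\rm CM}M$ is assumed to have; indeed if $\prescript{}{\rm CM}M$ had finite projective dimension it would be projective, and the lemma would be trivial. Relatedly, your claim that the full $\Hom$-vanishing $\Hom(M,{\rm K^{b,-}}(\gr\proj_{>0}A))=0$ follows from~(1) fails at $j=0$: nothing in the hypotheses prevents $\Hom_{\gr A}(M,A(i))\neq 0$ for $i<0$ (e.g.\ take $M=A(-1)$, or any Cohen--Macaulay module generated in positive degree). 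So the semiorthogonal decomposition does not place $M$ where you want it.

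The paper avoids this by working with $K$ directly --- the one object in the triangle that \emph{does} have finite projective dimension. From the long exact sequence (the same one you use for $(2)\Rightarrow(1)$) one gets $\Ext^{j}_{\gr A}(K,A(i))\cong\Ext^{j+1}_{\gr A}(M,A(i))=0$ for $j\geq 1$ and $i<0$. Inspecting the minimal projective resolution of $K$ then shows that the only term which can contain a summand from $\gr\proj_{>0}A$ is $P_0$, i.e.\ $K$ itself has such a projective direct summand $P$. Finally, $\Ext^1_{\gr A}(M,P)=0$ (this is part of~(1)) lets one split $P$ off the short exact sequence, yielding a new Cohen--Macaulay replacement whose kernel has a resolution in $\gr\proj_{\leq 0}A$. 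The key point you are missing is that the argument has to tolerate nonzero $\Hom_{\gr A}(M,A(i))$ and $\Hom_{\gr A}(K,A(i))$ for $i<0$; it is precisely the freedom to \emph{modify} $K$ that absorbs this.
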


\begin{proof}
Since $\prescript{}{\rm CM}M$ is Cohen-Macaulay, applying the functor $\Hom_{\gr A}(-, A(i))$ to the short exact sequence, we obtain isomorphisms
\[ \Ext_{\gr A}^j(M, A(i)) \iso \Ext_{\gr A}^{j-1}(K, A(i)) \qquad \text{for all } j > 1 \]
and an exact sequence
\begin{align*}
& \Hom_{\gr A}(M, A(i)) \mono \Hom_{\gr A}(\prescript{}{\rm CM}M, A(i)) \to \\
& \to \Hom_{\gr A}(K, A(i)) \epi \Ext_{\gr A}^1(M, A(i)).
\end{align*}

(1) \then{} (2): Pick a minimal projective resolution of $K$
\[ 0 \to P_d \to \cdots \to P_0 \to K. \]
Let $j$ be the leftmost position such that $P_j$ is not contained in $\gr\proj_{\leq 0} A$ (assuming such a term exists). It follows that $\Ext_{\gr A}^j(K, A(i)) \neq 0$ for some $i < 0$. By (1) and the discussion above it follows that $j = 0$. Thus $K$ has a non-zero direct summand in $\gr\proj_{> 0} A$. Since $\Ext^{1}_{\gr A}(M, \gr\proj_{> 0} A) = 0$ such a summand can be split off of the short exact sequence.

(2) \then{} (1): Since $K$ has a projective resolution in $\gr\proj_{\leq 0} A$, we have $\Ext_{\gr A}^j(K, A(i)) = 0$ for all $i < 0$ and all $j$. The claim then immediately follows from the discussion above.
\end{proof}

\subsection{Stably graded Calabi-Yau algebras}

\begin{definition}

A Gorenstein graded algebra $A$ is called \emph{stably  $(p)$-twisted $d$-Calabi-Yau} if there is a functorial isomorphism
\[D\underline{\Ext}^i_{\gr A}(X,Y)\iso \underline{\Ext}^{d-i}_{\gr A}(Y(p),X)\]
for any $X,Y\in\gr\CM\Pi$ and for any $i\in\mathbb{Z}$. 
\end{definition}

\begin{definition}\label{def bimod stably CY}
A graded algebra $A$ is called \emph{bimodule stably $(p)$-twisted $d$-Calabi-Yau} if there is an isomorphism 
\[\R \Hom_{A^{\rm e}}(A, A^{\rm e})[d+1] \iso A(p) \textrm{ in } \stgrCM (A^{\rm e}).
\]
The integer $p$ is called the Gorenstein parameter.
\end{definition}

\begin{remark}
Recall that a bimodule $(d+1)$-Calabi-Yau algebra is an homologically smooth algebra satisfying \[\R \Hom_{A^{\rm e}}(A, A^{\rm e})[d+1] \iso A \textrm{ in } {\rm D^b}(\gr A^{\rm e}).\] So the choice of $d$ for the Calabi-Yau dimension in our definition could seem strange. But the reason of our choice is motivated by the following result.
\end{remark}

\begin{theorem}\label{bimodule stably implies stably} 
Let $A$ be a finite dimensional graded Gorenstein algebra which is bimodule stably $(p)$-twisted $d$-Calabi-Yau, then $A$ is stably $(p)$-twisted $d$-Calabi-Yau.
\end{theorem}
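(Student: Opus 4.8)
The plan is to deduce the stable Calabi-Yau duality from the bimodule-level isomorphism by applying suitable derived $\Hom$ functors. First I would reduce to the case where $X$ and $Y$ are actual graded Cohen-Macaulay modules (using the equivalence $\stgrCM(A) \iso {\rm D^b}(\gr A)/\gr\perf A$ and Lemma~\ref{lemma.CM_replace1}, the functoriality of everything in sight allows us to replace arbitrary objects of $\stgrCM(A)$ by honest modules). The key computation is then the standard ``derived tensor-Hom'' manipulation: for $X \in \gr\CM(A)$, one has $\R\Hom_A(X, A) \otimes_A^{\L} Y \iso \R\Hom_A(X, Y)$ in the appropriate derived category — this holds because $X$ admits a (possibly infinite, but for Cohen-Macaulay modules the relevant part is controlled) projective resolution, or more precisely because over a Gorenstein algebra $\R\Hom_A(X,A)$ is again (a shift of) something Cohen-Macaulay over $A^{\rm op}$ and the natural evaluation map is an isomorphism.

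The core of the argument is the following chain of functorial isomorphisms. Writing $\Theta = \R\Hom_{A^{\rm e}}(A, A^{\rm e})$, hypothesis $\R\Hom_{A^{\rm e}}(A, A^{\rm e})[d+1] \iso A(p)$ in $\stgrCM(A^{\rm e})$ gives $\Theta \iso A(p)[-d-1]$ stably. Now for $X, Y \in \gr\CM(A)$ I would compute, in $\stgrCM(A)$ (i.e. modulo perfect complexes),
\[
\R\Hom_A(X, Y) \iso \R\Hom_A(X, A) \otimes_A^{\L} Y \iso \R\Hom_{A^{\rm e}}(A, A^{\rm e}) \otimes_{A^{\rm e}}^{\L} (X \otimes_k Y) \iso \Theta \otimes_{A^{\rm e}}^{\L}(X \otimes_k Y),
\]
where the middle isomorphism expresses $\R\Hom_A(X,A)\otimes_A^\L Y$ as a one-sided evaluation of the bimodule $\R\Hom_{A^{\rm e}}(A,A^{\rm e})$ against $X$ and $Y$ on the two sides. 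Substituting $\Theta \iso A(p)[-d-1]$ yields $\R\Hom_A(X, Y) \iso (X \otimes_k Y)(p)[-d-1]$ — no wait, more carefully, $A(p) \otimes_{A^{\rm e}}^{\L}(X\otimes_k Y) \iso$ something like $\R\Hom_A(X, Y(p))$ read in the other direction, so that one arrives at $\R\Hom_A(X, Y) \iso \R\Hom_A(Y(p), X)^{\vee}[-d-1]$ where $(-)^\vee = D(-)$ is $k$-duality. Taking $i$-th cohomology and using that in the stable category $\underline{\Ext}^i_{\gr A}(X,Y) = \Ho^i \R\Hom_A(X,Y)$ (for $X$ Cohen-Macaulay) and that $k$-duality converts $\Ho^i$ to $D\,\Ho^{-i}$, one obtains $D\underline{\Ext}^i_{\gr A}(X, Y) \iso \underline{\Ext}^{d-i}_{\gr A}(Y(p), X)$, which is exactly the claimed stably $(p)$-twisted $d$-Calabi-Yau property.

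The main obstacle, and the step requiring genuine care rather than formal manipulation, is making sense of the isomorphisms above \emph{modulo perfect complexes} and checking that passing between ${\rm D^b}(\gr A^{\rm e})$ and $\stgrCM(A^{\rm e})$ is compatible with the one-sided restriction functors $\gr A^{\rm e} \to \gr A$. Concretely: the evaluation $\Theta \otimes^{\L}_{A^{\rm e}}(X \otimes_k Y)$ uses $\Theta$ as an object of ${\rm D^b}(\gr A^{\rm e})$, but the hypothesis only identifies it with $A(p)[-d-1]$ after killing perfect complexes, so I must verify that tensoring a perfect complex of $A^{\rm e}$-modules against $X \otimes_k Y$ lands in $\gr\perf A$ (this is true because $A^{\rm e} \otimes_{A^{\rm e}}^\L (X\otimes_k Y) \iso X\otimes_k Y$ has finite projective dimension over $A$ as soon as $Y$ does — one uses $\pd_A D A < \infty$, i.e. Gorensteinness, here, together with $X$ Cohen-Macaulay). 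Once that compatibility is established the rest is bookkeeping with shifts and dualities; I would also double-check the sign/shift conventions so that the exponent comes out as $d - i$ and not $d + 1 - i$, the discrepancy being absorbed by the fact that $\underline{\Ext}$ over a Gorenstein algebra of the Cohen-Macaulay stable category is computed by complete resolutions, shifting the naive degree by one.
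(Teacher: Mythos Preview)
Your approach has a genuine gap at the very start: the isomorphism $\R\Hom_A(X, A) \otimes_A^{\L} Y \iso \R\Hom_A(X, Y)$ holds precisely when $X$ is perfect, and fails for a non-projective Cohen--Macaulay module. Take $A = k[x]/(x^2)$ and $X = Y = k$: then $\R\Hom_A(k, A) \iso k$ is concentrated in degree $0$, so the left side $k \otimes_A^{\L} k$ has homology $\Tor_i^A(k,k) = k$ only in non-positive cohomological degrees, while $\R\Hom_A(k, k)$ has $\Ext_A^i(k, k) = k$ for all $i \geq 0$. Your justification (``the natural evaluation map is an isomorphism'') is exactly the compactness condition that is missing. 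The subsequent formula $\R\Hom_A(X, Y) \iso \Theta \otimes_{A^{\rm e}}^{\L} (X \otimes_k Y)$ also does not typecheck: with $X$ and $Y$ both right $A$-modules, $X \otimes_k Y$ carries a right $A \otimes A$-action, not a left $A^{\rm e}$-action, so the tensor over $A^{\rm e}$ is undefined. (Replacing $X \otimes_k Y$ by $\Hom_k(X, Y)$ fixes the types and gives the correct identity $\Theta \otimes_{A^{\rm e}}^{\L} \Hom_k(X, Y) \iso \R\Hom_{A^{\rm e}}(A, \Hom_k(X, Y)) \iso \R\Hom_A(X, Y)$; but then the functor $- \otimes_{A^{\rm e}}^{\L} \Hom_k(X, Y)$ lands in complexes of vector spaces, where there is no non-trivial singularity category to absorb the perfect-complex ambiguity in $\Theta$, so you still cannot read off the individual $\underline{\Ext}$ groups.)

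The paper avoids these issues by not rewriting $\R\Hom_A(X,Y)$ as a tensor product at all. It quotes a Serre-duality formula of Keller, valid in all of ${\rm D^b}(\gr A)$ with no compactness hypothesis:
\[ D\Hom_{{\rm D^b}(\gr A)}(X, Y) \iso \Hom_{{\rm D^b}(\gr A)}(\Theta \otimes_A^{\L} Y, X), \qquad \Theta = \R\Hom_{A^{\rm e}}(A, A^{\rm e}). \]
Here $\Theta \otimes_A^{\L} -$ is an endofunctor of ${\rm D^b}(\gr A)$, so there \emph{is} a target singularity category to descend to. You correctly identified the required compatibility --- that $\Theta \otimes_A^{\L} -$ preserves $\gr\perf A$, which follows from Gorensteinness --- and the paper uses exactly this, together with a general result on Verdier quotients from \cite{Ami08}, to conclude that $\stgrCM(A)$ has a Serre functor with inverse $\Theta[1] \otimes_A^{\L} -$. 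Substituting the hypothesis $\Theta[d+1] \iso A(p)$ in $\stgrCM(A^{\rm e})$ identifies this inverse Serre functor with $(p)[-d]$, and the chain
\[ D\underline{\Hom}(X, Y[i]) \iso \underline{\Hom}(\Theta[i+1] \otimes_A^{\L} Y, X) \iso \underline{\Hom}(Y(p)[i-d], X) \]
gives the result with the correct shift; the extra $[1]$ is built into the Serre-functor statement, not recovered from complete resolutions.
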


\begin{proof}
This can be shown using an Auslander-Reiten formula in $\gr\CM A$ as it is done in \cite[3.10]{Yos90} and \cite[Theorem 8.3]{IY08} for local isolated singularities. Here we give a different argument using the description of the category $\stgrCM A$ as the localisation ${\rm D^b}(\gr A)/\gr\perf A$.

From \cite[Lemma 4.1]{Kel08} we know that for any $X,Y\in {\rm D^b}(\gr A)$ there is a functorial isomorphism
\[ D\Hom_{\rm D^b (\gr A)}(X,Y)\iso \Hom_{\rm D^b(\gr A)}( \R\Hom_{A^{\rm e}}(A,A^{\rm e})\overset{\L}{\otimes}_{A}Y,X).\]

Moreover since the algebra $A$ is Gorenstein, the functor $\R\Hom_{A^{\rm e}}(A,A^{\rm e})\overset{\L}{\otimes}_A\nolinebreak- $ sends any perfect complex to a perfect complex. Therefore we can apply \cite[Proposition 4.3.1]{Ami08} to deduce that the category ${\rm D^b}(\gr A)/\gr\perf A$ has a Serre functor whose inverse is given by $ \R\Hom_{A^{\rm e}}(A,A^{\rm e})[1]\overset{\L}{\otimes}_A-$.

Now for any objects $X,Y\in \gr\CM A$ and for any $i\in \mathbb{Z}$ we have functorial isomorphisms
\begin{align*}
D\underline{\Ext}_{\gr A}^i(X,Y) & \iso
 D\underline{\Hom}_{\gr A} (X,Y[i])\\
& \iso  \underline{\Hom}_{\gr A} ( \R\Hom_{A^{\rm e}}(A,A^{\rm e})[i+1]\overset{\L}{\otimes}_AY, X)\\
& \iso  \underline{\Hom}_{\gr A}(A[i-d](p)\overset{\L}{\otimes}_A Y,X)\\
&\iso  \underline{\Hom}_{\gr A}(Y(p),X[d-i])\\
& \iso \underline{\Ext}^{d-i}_{\gr A}(Y(p),X).
\end{align*}
\end{proof}

Note that if $A$ is bimodule stably $(p)$-twisted $d$-Calabi-Yau, then it is bimodule stably $d$-Calabi-Yau as an ungraded algebra. Therefore, by the same argument the stable category $\stCM A$ is $d$-Calabi-Yau.

\begin{remark} For a self-injective (or a Frobenius) algebra, the non graded version of Definition \ref{def bimod stably CY} coincide with the definition of Frobenius $d$-Calabi-Yau algebra \cite[Def 2.3.6]{ES}. In this setup Theorem~\ref{bimodule stably implies stably} is \cite[Theorem 2.3.21]{ES}.
\end{remark}

\subsection{Higher preprojective algebras}

Let $\Lambda$ be a finite dimensional algebra of global dimension $d-1$. We denote by $\mathbb{S}_{d-1}=-\overset{\L}{\otimes}_{\Lambda} D\Lambda[-d+1]$ the composition of the Serre functor with the $(d-1)$ desuspension of the bounded derived category ${\rm D^b}(\mod \Lambda)$. It is an autoequivalence of ${\rm D^b}(\mod \Lambda)$.
We denote  by $\tau_{d-1}$ the composition

\[\begin{tikzpicture} \matrix (D) [mmn] {
 \mod \Lambda & {\rm D^b}(\mod\Lambda) & {\rm D^b}(\mod\Lambda) & \mod \Lambda\\};
 \draw (D-1-1) edge [right hook->] (D-1-2);
 \draw (D-1-2) edge [->] node [above]{$\mathbb{S}_{d-1}$} (D-1-3);
 \draw (D-1-3) edge [->] node [above]{$H^0$} (D-1-4);
\end{tikzpicture}\]

The algebra $\Lambda$ is called \emph{$\tau_{d-1}$-finite} if the functor $\tau_{d-1}$ is nilpotent.
\begin{definition}\cite{IO13}
The $d$-preprojective algebra of $\Lambda$ is defined to be the tensor algebra \[\Pi_d(\Lambda):= \Ten_{\Lambda}\Ext^{d-1}_\Lambda(D\Lambda,\Lambda).\]
\end{definition}
It is immediate to see that we have $\Pi_d\Lambda\iso \bigoplus_{p\geq 0}\tau_{d-1}^{-p}\Lambda$ as $\Lambda$-bimodules. Hence the algebra $\Lambda$ is $\tau_{d-1}$-finite if and only if $\Pi_{d}(\Lambda)$ is finite dimensional.

\section{Homological characterization of finite dimensional preprojective algebras}\label{section2}

In this section we prove the main result of the paper, that gives a sufficient condition for a finite dimensional graded algebra $\Pi$ to be the $d$-preprojective algebra of its degree zero subalgebra.

\subsection{Main result and strategy of the proof}

\begin{theorem} \label{thm.21}
Let $\Pi$ be a finite dimensional positively graded algebra, and an integer $d\geq 2$, such that
\begin{enumerate}
\item $\Pi$ is Gorenstein of dimension at most $d-2$, that is $\id \Pi = \pd D \Pi = g$, for some $g \leq d-2$;
\item $\Pi$ is bimodule stably $(1)$-twisted $d$-Calabi-Yau; 
\item $\Ext_{\gr \Pi^{\rm e}}^j(\Pi, \Pi^{\rm e}(i)) = 0$ for all $i < 0$ and $j > 0$.
\end{enumerate}
Then $\Lambda = \Pi_0$ is a $\tau_{d-1}$-finite algebra of global dimension $\leq d-1$, and $\Pi$ is the $d$-preprojective algebra of $\Lambda$.
\end{theorem}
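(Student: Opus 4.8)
The plan is to recover the algebra $\Lambda=\Pi_0$ together with an explicit isomorphism $\Pi\iso\Pi_d(\Lambda)$ of graded algebras. Since $\Pi_d(\Lambda)=\Ten_\Lambda\Ext^{d-1}_\Lambda(D\Lambda,\Lambda)$ is the tensor algebra on a bimodule, it suffices to show two things: first, that $\gldim\Lambda\le d-1$ (so that the preprojective algebra is even defined) and that $\Lambda$ is $\tau_{d-1}$-finite; and second, that there is an isomorphism of graded $\Lambda$-bimodules $\Pi\iso\bigoplus_{p\ge 0}\tau_{d-1}^{-p}\Lambda\iso\bigoplus_{p\ge 0}\Ext^{d-1}_\Lambda(D\Lambda,\Lambda)^{\otimes_\Lambda p}$ which is compatible with the multiplication, i.e.\ is induced from an isomorphism of the degree-$1$ part $\Pi_1\iso\Ext^{d-1}_\Lambda(D\Lambda,\Lambda)$ as $\Lambda$-bimodules together with surjectivity of $\Pi_1^{\otimes_\Lambda p}\to\Pi_p$.

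\textbf{Step 1: a projective resolution of $\Pi$ over $\Pi^{\mathrm e}$.} The key computational input is hypothesis $(b)$, $\R\Hom_{\Pi^{\rm e}}(\Pi,\Pi^{\rm e})[d+1]\iso\Pi(1)$ in $\stgrCM(\Pi^{\rm e})$, refined by $(a)$ and $(c)$. First I would apply Lemma~\ref{lemma.CM_replace1} and Lemma~\ref{lemma.CM_replace2} to the $\Pi^{\rm e}$-module $\Pi$: since $\Pi$ is generated in degree $0$, by Remark~\ref{rem.CM_repl_proj} we get a Cohen--Macaulay replacement $K\mono\prescript{}{\rm CM}\Pi\epi\Pi$ where $K$ has a finite projective resolution using only summands of $\Pi^{\rm e}(i)$ with $i\le 0$; hypothesis $(c)$ is exactly what makes this possible. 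Dualizing and using $(b)$, one converts the stable isomorphism of $(b)$ into a genuine complex: $\Pi$ admits a projective resolution over $\Pi^{\rm e}$ whose last nonzero term (in degree $d+1$, as forced by the Gorenstein dimension bound $g\le d-2$ via Lemma~\ref{lemma_syzygyCM}) is $\Pi^{\rm e}(-1)$, i.e.\ there is a complex of graded $\Pi^{\rm e}$-modules
\[
0\to \Pi^{\rm e}(-1)\to Q_{d}\to\cdots\to Q_1\to Q_0\to \Pi\to 0
\]
with each $Q_j$ projective. Tracking degrees, $Q_0=\Pi^{\rm e}$ and, using $(a)$ to kill higher $\Ext$ in low degrees, the low-degree parts of the $Q_j$ give the structure one needs.

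\textbf{Step 2: extracting $\Lambda$ and its global dimension.} Restricting the resolution of Step 1 to degree $0$ and tensoring appropriately, one obtains information about $\Lambda=\Pi_0$ as a $\Lambda^{\rm e}$-module. The idea is that applying $\Pi_0\otimes_\Pi-\otimes_\Pi\Pi_0$ (or the graded-degree-zero truncation) to the $\Pi^{\rm e}$-resolution of $\Pi$ produces a $\Lambda^{\rm e}$-projective resolution of $\Lambda$ of length $\le d-1$: the terminal term $\Pi^{\rm e}(-1)$ sits in positive internal degree and so contributes nothing in degree $0$, which is precisely why the length drops from $d+1$ to $d-1$. Hence $\pd_{\Lambda^{\rm e}}\Lambda\le d-1$, so $\gldim\Lambda\le d-1$. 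Moreover, reading off the degree-$1$ part of the resolution and using $(b)$ should identify $\Pi_1$ with $\Ext^{d-1}_\Lambda(D\Lambda,\Lambda)=\Ext^{d-1}_{\Lambda^{\rm e}}(\Lambda,\Lambda^{\rm e})$ as $\Lambda$-bimodules, and $\tau_{d-1}$-finiteness is automatic since $\Pi$ is finite dimensional (it forces $\bigoplus_p\tau_{d-1}^{-p}\Lambda$ to be finite dimensional once we know the bimodule identification).

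\textbf{Step 3: identifying the algebra structure.} It remains to upgrade the bimodule statement $\Pi\iso\bigoplus_{p\ge0}\Ext^{d-1}_\Lambda(D\Lambda,\Lambda)^{\otimes_\Lambda p}$ to an algebra isomorphism. For this I would argue that the multiplication maps $\Pi_1\otimes_\Lambda\Pi_{p}\to\Pi_{p+1}$ are all isomorphisms: surjectivity would follow from $\Pi$ being generated in degree $1$ over $\Pi_0$ (which in turn should come from the shape of the minimal resolution in Step 1 — if $\Pi$ were not generated in degree $1$ there would be extra generators forcing extra terms in the resolution incompatible with $(b)$), and then injectivity follows by comparing dimensions in each degree using the bimodule isomorphism of Step 2. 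A clean way to package Step 3 is to invoke the universal property of the tensor algebra: the inclusion $\Pi_0\hookrightarrow\Pi$ and the bimodule map $\Ext^{d-1}_\Lambda(D\Lambda,\Lambda)\xrightarrow{\sim}\Pi_1\hookrightarrow\Pi$ induce an algebra homomorphism $\Pi_d(\Lambda)\to\Pi$, which is surjective because $\Pi$ is generated in degree $1$, and bijective in each degree by the dimension count.

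\textbf{Main obstacle.} The principal difficulty is Step 1: converting the \emph{stable} isomorphism $(b)$ in $\stgrCM(\Pi^{\rm e})=\mathrm D^{\rm b}(\gr\Pi^{\rm e})/\gr\perf\Pi^{\rm e}$ into honest homological data about a genuine projective resolution of $\Pi$ over $\Pi^{\rm e}$, with precise control of the internal degrees of each term. Hypothesis $(c)$ is the crucial lever — it forces the "finite projective dimension" part of the Cohen--Macaulay replacement to live in non-positive internal degrees, so that when one passes back to degree $0$ the length genuinely drops to $d-1$ — but combining $(a)$, $(b)$, $(c)$ to pin down the top term as exactly $\Pi^{\rm e}(-1)$ and no more, and then transporting this through the (non-exact) functors relating $\Pi^{\rm e}$-modules and $\Lambda^{\rm e}$-modules, is where the real work lies. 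A secondary subtlety is checking that the bimodule isomorphism produced by homological algebra is compatible with the ring structure rather than merely abstractly existing; the universal-property argument sketched above is meant to sidestep having to verify this by hand.
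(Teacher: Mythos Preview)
Your overall strategy --- use the Cohen--Macaulay replacement and hypothesis~(3) to control degrees, then extract $\Lambda^{\rm e}$-information in degree zero, then invoke the universal property of the tensor algebra --- is close in spirit to the paper's. But Step~1 contains a genuine error that undermines the rest of the argument.

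You claim that the stable isomorphism $(b)$, combined with $(a)$ and $(c)$, yields a \emph{finite} projective resolution
\[
0\to \Pi^{\rm e}(-1)\to Q_d\to\cdots\to Q_0\to \Pi\to 0
\]
of $\Pi$ over $\Pi^{\rm e}$. This is false in the cases of interest. A finite projective resolution would give $\pd_{\Pi^{\rm e}}\Pi<\infty$, i.e.\ $\Pi$ homologically smooth, hence $\gldim\Pi<\infty$; Observation~\ref{obs hyp c} shows that then $\Pi$ is concentrated in degree~$0$ and the theorem is trivial. Lemma~\ref{lemma_syzygyCM} only tells you that $\Omega^g_{\Pi^{\rm e}}\Pi$ is Cohen--Macaulay, not that the resolution stops. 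What Observation~\ref{obs.eq_CY} actually extracts from $(b)$ is the isomorphism $\Hom_{\Pi^{\rm e}}(M,\Pi^{\rm e})(-1)\iso\Omega^{d+1}_{\Pi^{\rm e}}\Pi$ in $\gr\CM\Pi^{\rm e}$ up to projective summands in degree~$1$; the resolution remains infinite, with a twisted periodicity in the syzygies rather than a terminal term $\Pi^{\rm e}(-1)$. Because Step~2 (truncating to degree~$0$ to get a $\Lambda^{\rm e}$-resolution of length~$d-1$) rests on this nonexistent finite resolution, it does not go through as written; and your justification in Step~3 for generation in degree~$1$ (``extra generators would force extra terms in the resolution'') likewise depends on having pinned down that resolution.

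The paper replaces your Step~1 by the semiorthogonal decomposition of Proposition~\ref{prop.semiorth}: the triangle $\Pi_{\leq 0}\to\Pi\to\Pi_{>0}\to{}$ in ${\rm D^b}(\gr\Pi^{\rm e})$. The key computation (Lemma~\ref{lemma.tech_prep}) identifies ${\rm H}^i(\Pi_{>0}\otimes^{\L}_\Pi\Lambda)$ with ${\rm H}^{i+d}(\Pi(-1)\otimes^{\L}_\Lambda\R\Hom_\Lambda(D\Lambda,\Lambda))$ for $i\geq -d+g+1$, using only the first $d+1$ terms of a projective resolution of $\Pi$ together with the syzygy isomorphism above. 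This yields directly the short exact sequence $\Pi(-1)\otimes_\Lambda\Ext^{d-1}_\Lambda(D\Lambda,\Lambda)\mono\Pi\epi\Lambda$ of graded $\Pi\otimes\Lambda^{\rm op}$-modules (Proposition~\ref{prop.tensor_is_geq1}), which simultaneously proves generation in degree~$1$ and supplies the $\Pi$-linear (not merely $\Lambda$-bilinear) isomorphism $\Pi(-1)\otimes_\Lambda X\iso\Pi_{>0}$ needed to build the ring isomorphism iteratively. Your universal-property packaging in Step~3 would work once that short exact sequence is in hand, but obtaining it requires the semiorthogonal-decomposition machinery rather than a finite resolution.
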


The main ingredient of the proof is the triangle
\[ \Pi_{\leq 0} \to \Pi \to \Pi_{> 0} \to  \quad {\rm in}\  {\rm D^b}(\gr \Pi^{\rm e})\]
given by Proposition~\ref{prop.semiorth}, that is a decomposition of the projective bimodule resolution of $\Pi$ according to the degrees the projective bimodules are generated in. More precisely the strategy of the proof consists of the computation of the cohomology groups of the triangle \[ \Pi_{\leq 0}\otimes^{\L}_{\Pi}\Lambda \to \Pi\otimes^{\L}_{\Pi}\Lambda \to \Pi_{> 0}\otimes^{\L}_{\Pi}\Lambda \to\quad {\rm in}\ {\rm D^b}(\gr \Pi\otimes\Lambda^{\rm op}). \]

Let us start by reformulating some of the conditions. 
By Lemmas~\ref{lemma.CM_replace1} and \ref{lemma.CM_replace2} and Remark~\ref{rem.CM_repl_proj} there is a short exact sequence of $\Pi^{\rm e}$-modules
\[ K \mono M \epi \Pi, \]
such that $K$ has a finite projective resolution, with terms generated in non-positive degree, and $M$ is a Cohen-Macaulay $\Pi^{\rm e}$-module, all of whose projective summands are generated in degree zero.

\begin{observation} \label{obs.eq_CY}
We may reformulate the condition that $\Pi$ is bimodule stably $(1)$-twisted $d$-Calabi-Yau (Condition~(2)) by adding the following isomorphisms (in $\stgrCM(\Pi^{\rm e})$) in front of and after the defining one:
\[ \R\Hom_{\Pi^{\rm e}}(M, \Pi^{\rm e})[d+1] \iso \R\Hom_{\Pi^{\rm e}}(\Pi, \Pi^{\rm e})[d+1] \overset{\rm def}{\iso} \Pi(1) \iso \Omega_{\Pi^{\rm e}}^{d+1} \Pi [d+1](1). \]
Note that since $M$ is Cohen-Macaulay we may drop the $\R$ in the leftmost term. Moreover by Condition~(1) we have that also $\Omega_{\Pi^{\rm e}}^{d+1} \Pi$ is Cohen-Macaulay. Thus, under Condition~(1), Condition~(2) is equivalent to
\[ \Hom_{\Pi^{\rm e}}(M, \Pi^{\rm e})(-1) \iso \Omega_{\Pi^{\rm e}}^{d+1} \Pi \]
up to projective summands generated in degree $1$. 
\end{observation}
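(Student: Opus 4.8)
The plan is to show that, under Condition~(1), each of the three links in the displayed chain either holds \emph{unconditionally} or is precisely Condition~(2), so that the whole chain is a reversible rewriting of Condition~(2); only at the very end do I translate the resulting stable isomorphism of genuine Cohen--Macaulay modules into an isomorphism up to projective summands, with the degree refinement. I would treat the three links separately.

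First I would establish the leftmost (``in front'') isomorphism. The short exact sequence $K \mono M \epi \Pi$ furnished by Lemmas~\ref{lemma.CM_replace1} and~\ref{lemma.CM_replace2} has $K$ of finite projective dimension, so $K \in \gr\perf\Pi^{\rm e}$, and the induced triangle $K \to M \to \Pi \to$ shows $M \iso \Pi$ in $\stgrCM(\Pi^{\rm e}) = {\rm D^b}(\gr\Pi^{\rm e})/\gr\perf\Pi^{\rm e}$. Since $\Pi$ is Gorenstein by Condition~(1), the enveloping algebra $\Pi^{\rm e} = \Pi \otimes \Pi^{\rm op}$ is again Gorenstein (its Gorenstein dimension being the sum of those of $\Pi$ and $\Pi^{\rm op}$), so the duality $\R\Hom_{\Pi^{\rm e}}(-, \Pi^{\rm e})$ preserves bounded complexes and $\gr\perf\Pi^{\rm e}$, hence descends to $\stgrCM(\Pi^{\rm e})$. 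Applying it, shifted by $[d+1]$, to $M \iso \Pi$ yields the leftmost isomorphism, while the middle one is Condition~(2) by Definition~\ref{def bimod stably CY}.

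Next I would dispatch the rightmost (``after'') isomorphism together with the two identifications of genuine modules. In the triangulated category $\stgrCM(\Pi^{\rm e})$ the syzygy functor is quasi-inverse to the suspension, so $\Omega_{\Pi^{\rm e}}^{d+1}\Pi \iso \Pi[-(d+1)]$, and resuspending and twisting gives $\Pi(1) \iso \Omega_{\Pi^{\rm e}}^{d+1}\Pi[d+1](1)$. Because $M$ is Cohen--Macaulay, $\Ext_{\Pi^{\rm e}}^{>0}(M, \Pi^{\rm e}) = 0$, so $\R\Hom_{\Pi^{\rm e}}(M, \Pi^{\rm e}) = \Hom_{\Pi^{\rm e}}(M, \Pi^{\rm e})$, and by Gorenstein duality this is again a genuine Cohen--Macaulay module. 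Finally, $\Pi^{\rm e}$ being Gorenstein, Lemma~\ref{lemma_syzygyCM} shows that a sufficiently high syzygy of $\Pi$ over $\Pi^{\rm e}$ is Cohen--Macaulay, and under Condition~(1) the $(d+1)$-st syzygy $\Omega_{\Pi^{\rm e}}^{d+1}\Pi$ already lies in this range.

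It remains to read off the equivalence. Desuspending the chain by $[d+1]$ and twisting by $(-1)$ turns it into $\Hom_{\Pi^{\rm e}}(M, \Pi^{\rm e})(-1) \iso \Omega_{\Pi^{\rm e}}^{d+1}\Pi$ in $\stgrCM(\Pi^{\rm e})$, an isomorphism between two genuine Cohen--Macaulay modules; as $\stgrCM(\Pi^{\rm e})$ is the stable category of the Frobenius category $\gr\CM(\Pi^{\rm e})$, this is an isomorphism up to projective summands. Since every link except the middle holds unconditionally (given Condition~(1)), the chain is reversible, so Condition~(2) is \emph{equivalent} to this module isomorphism. The degree refinement is the delicate point: by Remark~\ref{rem.CM_repl_proj} the projective summands of $M$ are generated in degree $0$, hence so are those of $\Hom_{\Pi^{\rm e}}(M, \Pi^{\rm e})$, and after the twist $(-1)$ they are generated in degree $1$; tracking these through the identification shows that the projective summands one must add are generated in degree $1$. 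I expect the main obstacle to be exactly this bookkeeping---keeping control of the degrees in which projectives are generated across the duality and the shift---together with confirming that the genuine syzygy $\Omega_{\Pi^{\rm e}}^{d+1}\Pi$, and not merely its image in the stable category, is Cohen--Macaulay.
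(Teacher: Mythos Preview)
Your overall strategy matches the reasoning the paper embeds in the Observation, and most steps are correct. There is, however, a gap in your justification that $\Omega_{\Pi^{\rm e}}^{d+1}\Pi$ is Cohen--Macaulay. You invoke Lemma~\ref{lemma_syzygyCM} over the Gorenstein ring $\Pi^{\rm e}$ and assert that under Condition~(1) the $(d+1)$-st syzygy ``already lies in this range''. But the Gorenstein dimension of $\Pi^{\rm e}$ is only bounded by $2g \leq 2(d-2)$ (the sum you yourself compute), and $d+1 \geq 2(d-2)$ fails as soon as $d \geq 6$; so for large $d$ Lemma~\ref{lemma_syzygyCM} applied to $\Pi^{\rm e}$ does not suffice.

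The fix is specific to the bimodule $\Pi$ rather than a general syzygy bound: one has $\Ext^i_{\Pi^{\rm e}}(\Pi, \Pi^{\rm e}) \iso \Ext^i_\Pi(D\Pi, \Pi)$ (this is the isomorphism $\R\Hom_{A^{\rm e}}(A, A^{\rm e}) \iso \R\Hom_A(DA, A)$ invoked at the end of the proof of Lemma~\ref{lemma.tech_prep}), and the right-hand side vanishes for $i > g$ because $\id_\Pi \Pi = g$ by Condition~(1). Hence already $\Omega^g_{\Pi^{\rm e}}\Pi$ is Cohen--Macaulay---which is exactly what the paper uses in the proof of Lemma~\ref{lemma.tech_prep}---and a fortiori so is $\Omega^{d+1}_{\Pi^{\rm e}}\Pi$ since $d+1 > g$. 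With this correction in place, the rest of your argument, including the degree-$1$ bookkeeping for the projective summands, goes through.
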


\begin{observation}\label{obs hyp c}
When we consider the special case where $\Pi$ has finite global dimension, conditions (1), (2) and (3) are very easy to check. Then we immediately have ${\rm gl.dim\;}\Pi=g$ so Condition~(1) holds whenever ${\rm gl.dim\;}\Pi\leq d-2$. Moreover Condition~(2) vacuously holds. Thus it remains to consider Condition~(3).

\begin{lemma}
If $\Pi$ is a positively graded algebra with finite global dimension. Then Condition~(3) is equivalent to $\Pi$ being concentrated in degree $0$.
\end{lemma}

\begin{proof}
If $\Pi$ is concentrated in degree $0$, then clearly (3) holds.

Conversely, assume that $\Pi$ is not concentrated in degree $0$. Then there are two graded simples $S$ and $T$ such that $\Ext^1_{\gr \Pi}(S, T) \neq 0$, where $S$ is concentrated in $0$ and $T$ in some positive degree. Now, considering the injective resolution of $S$ (note that this is concentrated in non-positive degrees) and the projective resolution of $T$ (which, similarly, is concentrated in positive degrees), we find that $\Ext^i(D \Pi, \Pi (j)) \neq 0$ for some positive $i$ and negative~$j$.
\end{proof}

On the other hand we see that under condition (1) the degree $0$ part of $\Pi$ aslo has global dimension $\leq d-2$, so the $d$-preprojective algebra of $\Pi_0$ is $\Pi_0$. In other words $\Pi$ is the $d$-preprojective algebra of its degree $0$ part if and only if it is concentrated in degree $0$.

This indicates that, at least in the special case of $\Pi$ having finite global dimension, condition $(3)$ actually is the ``correct'' condition here.

\end{observation}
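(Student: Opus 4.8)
The plan is to treat the two implications separately, the forward one being immediate and the converse carrying all the content. For the direction ``concentrated in degree $0$ $\then$ (3)'': if $\Pi=\Pi_0$ then $\Pi^{\rm e}$ is concentrated in degree $0$, so $\Pi^{\rm e}(i)$ is supported in the single degree $-i$ while $\Pi$ sits in degree $0$. Since $\gr\Pi^{\rm e}$ splits as a product of copies of $\mod\Pi_0^{\rm e}$ indexed by $\mathbb{Z}$, every $\Ext_{\gr\Pi^{\rm e}}^{j}$ between modules supported in distinct single degrees vanishes; in particular (3) holds (for all $j$, not just $j>0$) as soon as $i\neq0$. For the converse I would first trade the bimodule condition for a one-sided one: since $\gldim\Pi<\infty$ the bimodule $\Pi$ is perfect over $\Pi^{\rm e}$ and $D\Pi$ is its dualizing complex, so the standard identification of the inverse dualizing complex gives $\R\Hom_{\Pi^{\rm e}}(\Pi,\Pi^{\rm e})\iso\R\Hom_{\Pi}(D\Pi,\Pi)$, compatibly with the internal grading. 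Under this Condition~(3) becomes
\[ \Ext_{\gr\Pi}^{j}(D\Pi,\Pi(i))=0\quad\text{for all } i<0,\ j>0, \]
so it suffices to show that if $\Pi$ is not concentrated in degree $0$ then some such $\Ext$ is nonzero.

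The first ingredient of the converse is a single extension between degree-separated simples. As $\Pi$ is finite dimensional and positively graded, $\Pi_{>0}$ is nilpotent and hence lies in $\Rad\Pi$; if $\Pi$ is not concentrated in degree $0$, letting $m>0$ be the least degree with $\Pi_m\neq0$ one finds $(\Rad\Pi/\Rad^2\Pi)_m\neq0$, because the contribution $\sum_{a+b=m,\,a,b\geq1}\Pi_a\Pi_b$ vanishes by minimality of $m$. This is a graded arrow of degree $m$, that is, graded simples $S$ concentrated in degree $0$ and $T$ concentrated in degree $m$ with $\Ext_{\gr\Pi}^{1}(S,T)\neq0$. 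The second ingredient is the shape of two resolutions: because $S$ sits in degree $0$ its minimal injective coresolution $0\to S\to I^0\to\cdots\to I^N\to0$ stays in degrees $\leq0$, so each $I^k$ is a summand of some $D\Pi(c)$ with $c\geq0$; because $T$ sits in degree $m>0$ its minimal projective resolution $0\to P_M\to\cdots\to P_0\to T\to0$ stays in degrees $>0$, so each $P_l$ is a summand of some $\Pi(-c')$ with $c'>0$.

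The heart is then a two-step dimension shift promoting this single $\Ext^1$ to a higher $\Ext$ between a summand of $D\Pi$ and a shifted summand of $\Pi$. Feeding the cosyzygy sequences $0\to C^{k-1}\to I^k\to C^k\to0$ (with $C^{-1}=S$, $C^N=0$) into the long exact sequences of $\Ext_{\gr\Pi}(-,T)$, the nonvanishing of $\Ext^1(S,T)$ propagates through the connecting maps and, because the coresolution is finite, must at some stage force $\Ext^{j}_{\gr\Pi}(I^k,T)\neq0$ for some $k$ and some $j\geq1$. Feeding the syzygy sequences $0\to Z_l\to P_l\to Z_{l-1}\to0$ into the long exact sequences of $\Ext_{\gr\Pi}(I^k,-)$ and again using finiteness, this forces $\Ext^{j'}_{\gr\Pi}(I^k,P_l)\neq0$ for some $l$ and some $j'\geq j\geq1$. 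Since $I^k$ is a summand of $D\Pi(c)$ and $P_l$ a summand of $\Pi(-c')$, we conclude $\Ext^{j'}_{\gr\Pi}(D\Pi,\Pi(-c-c'))\neq0$ with $-c-c'<0$ (as $c\geq0$, $c'>0$) and $j'>0$, exactly violating the reduced Condition~(3).

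The main obstacle is the bookkeeping of this last step: one must run both dimension shifts so that a class genuinely survives (a connecting map is injective precisely when one is forced to pass to the next (co)syzygy, and finiteness of the resolutions guarantees termination on the ``$\Ext\neq0$'' branch), and one must use the degree separation of the two resolutions in an essential way to land the surviving $\Ext$ in internal degree $i<0$ rather than $i\geq0$. The reduction of the bimodule statement to $\R\Hom_{\Pi}(D\Pi,\Pi)$, together with the exact placement of the grading shift under that identification, is the other point requiring care, though it is standard.
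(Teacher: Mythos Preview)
Your proposal is correct and follows essentially the same route as the paper's proof: both reduce to the one-sided condition $\Ext_{\gr\Pi}^j(D\Pi,\Pi(i))\neq 0$, produce graded simples $S$ in degree $0$ and $T$ in positive degree with $\Ext^1(S,T)\neq 0$, and then use the degree constraints on the injective coresolution of $S$ and the projective resolution of $T$ to promote this to a nonvanishing $\Ext$ between summands of $D\Pi$ and $\Pi(j)$ with $j<0$. Your write-up simply makes explicit the bimodule-to-one-sided reduction and the dimension-shifting that the paper compresses into one sentence.
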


\subsection{The global dimension of $\Lambda$}

\begin{proposition}
In the setup of Theorem~\ref{thm.21} we have $\gldim \Lambda \leq d-1$.
\end{proposition}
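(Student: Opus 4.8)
The plan is to show that the graded algebra $\Lambda = \Pi_0$ has global dimension at most $d-1$ by exploiting the Gorenstein hypothesis (1) together with the bimodule Calabi-Yau isomorphism (2), passing from bimodules to one-sided modules via the derived tensor functor $-\overset{\L}{\otimes}_\Pi \Lambda$. First I would note that $\Lambda$ is a quotient of $\Pi$ by the two-sided ideal $\Pi_{>0} = \bigoplus_{i>0}\Pi_i$, and that as a graded $\Pi$-module $\Lambda$ is concentrated in degree $0$. The key object is $\Pi\overset{\L}{\otimes}_\Pi\Lambda \iso \Lambda$, but more importantly the complexes $\Pi_{\leq 0}\overset{\L}{\otimes}_\Pi\Lambda$ and $\Pi_{>0}\overset{\L}{\otimes}_\Pi\Lambda$ coming from the semiorthogonal triangle of Proposition~\ref{prop.semiorth}; applying $-\overset{\L}{\otimes}_\Pi\Lambda$ to $\Pi_{\leq 0}\to\Pi\to\Pi_{>0}\to$ produces a triangle in ${\rm D^b}(\gr\Pi\otimes\Lambda^{\rm op})$ whose cohomology I want to understand.

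The main step is a bound on projective dimension. I would argue that $\operatorname{pd}_\Lambda \Lambda/\operatorname{rad}\Lambda$, equivalently the injective dimension of the simple $\Lambda$-modules, is controlled by the Gorenstein dimension $g \leq d-2$ of $\Pi$ plus a correction of $1$ coming from the degree shift in $\Lambda = \Pi/\Pi_{>0}$. Concretely: take a graded simple $\Pi$-module $S$ (automatically a $\Lambda$-module since it is annihilated by $\Pi_{>0}$). A minimal graded projective $\Pi$-resolution of $S$ has its $i$-th term in degrees $\geq i$ roughly speaking, but more precisely, by Lemma~\ref{lemma_syzygyCM} the $g$-th syzygy $\Omega^g_\Pi(S)$ is Cohen-Macaulay over $\Pi$. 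Now I apply the duality $\Hom_{\Pi}(-,\Pi)$ and the Calabi-Yau isomorphism (2): in $\stgrCM(\Pi^{\rm e})$ we have $\R\Hom_{\Pi^{\rm e}}(\Pi,\Pi^{\rm e})[d+1]\iso\Pi(1)$, which by Theorem~\ref{bimodule stably implies stably} gives a functorial duality $D\underline{\Ext}^i_{\gr\Pi}(X,Y)\iso\underline{\Ext}^{d-i}_{\gr\Pi}(Y(1),X)$ on $\gr\CM\Pi$. Feeding $X,Y$ the Cohen-Macaulay approximations of simples and tracking the degree-$0$ versus positive-degree parts (using Condition (3) to control vanishing of $\Ext$ into negatively-shifted projectives, via Lemmas~\ref{lemma.CM_replace1} and~\ref{lemma.CM_replace2}), one concludes that $\Ext^{>d-1}_\Lambda(S,-)=0$ for every graded simple $\Lambda$-module $S$, hence $\gldim\Lambda\leq d-1$.

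The hard part will be bookkeeping the grading and the Cohen-Macaulay replacements simultaneously: the isomorphism in (2) only holds in the stable category $\stgrCM(\Pi^{\rm e})$, i.e. up to projective bimodule summands generated in specified degrees (as recorded in Observation~\ref{obs.eq_CY}, which replaces $\Pi$ by its Cohen-Macaulay approximation $M$ and rewrites (2) as $\Hom_{\Pi^{\rm e}}(M,\Pi^{\rm e})(-1)\iso\Omega^{d+1}_{\Pi^{\rm e}}\Pi$ up to degree-$1$ projective summands). So the real work is to pass from this bimodule statement, after $-\overset{\L}{\otimes}_\Pi\Lambda$, to an honest vanishing of $\Ext^{\geq d}_\Lambda(S,\Lambda\text{-module})$ without losing track of which projective summands were discarded; the positivity of the grading on $\Pi$ and Condition (3) are exactly what guarantee that the discarded summands sit in degrees that do not interfere with the degree-$0$ computation over $\Lambda$. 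Once the projective dimension of each simple $\Lambda$-module is bounded by $d-1$, the conclusion $\gldim\Lambda\leq d-1$ is immediate.
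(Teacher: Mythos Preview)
Your proposal has a genuine gap. You mix two strategies — the bimodule triangle $\Pi_{\leq 0}\to\Pi\to\Pi_{>0}\to$ tensored with $\Lambda$, and a one-sided argument via simples $S$ and the Serre duality on $\stgrCM\Pi$ — without carrying either through. The crucial step, ``feeding CM approximations of simples \ldots\ one concludes $\Ext^{>d-1}_\Lambda(S,-)=0$'', is never made precise: the stably $(1)$-twisted $d$-CY duality relates \emph{stable} $\Ext$-groups of CM $\Pi$-modules, and there is no direct passage from those to honest $\Ext^{\geq d}_\Lambda$ between simples. Moreover, Condition~(3) is a statement about $\Ext_{\gr\Pi^{\rm e}}^j(\Pi,\Pi^{\rm e}(i))$, i.e.\ a \emph{bimodule} vanishing; invoking it through Lemmas~\ref{lemma.CM_replace1}--\ref{lemma.CM_replace2} only controls the CM replacement of $\Pi$ over $\Pi^{\rm e}$, not of a simple over $\Pi$. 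So the chain from (3) to one-sided $\Ext$-vanishing over $\Lambda$ is missing.

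The paper's argument avoids one-sided modules entirely and is much shorter. It bounds $\pd_{\Lambda^{\rm e}}\Lambda$ by computing $\Omega^d_{\Lambda^{\rm e}}\Lambda$ directly: since $\Pi$ is positively graded, taking degree-$0$ parts of the minimal graded $\Pi^{\rm e}$-resolution of $\Pi$ gives the $\Lambda^{\rm e}$-resolution of $\Lambda$, so $\Omega^d_{\Lambda^{\rm e}}\Lambda \iso (\Omega^d_{\Pi^{\rm e}}\Pi)_0$. Because $g\leq d-2$, $\Omega^d_{\Pi^{\rm e}}\Pi$ is CM, and one rewrites
\[
(\Omega^d_{\Pi^{\rm e}}\Pi)_0 \iso (\Omega^{-1}_{\Pi^{\rm e}}\Omega^{d+1}_{\Pi^{\rm e}}\Pi)_0 \iso (\Hom_{\Pi^{\rm e}}(\Omega_{\Pi^{\rm e}}M,\Pi^{\rm e})(-1))_0 = \Hom_{\gr\Pi^{\rm e}}(\Omega_{\Pi^{\rm e}}M,\Pi^{\rm e}(-1))
\]
using Observation~\ref{obs.eq_CY}. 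The sequence $K\mono M\epi\Pi$ then lets one replace $\Omega_{\Pi^{\rm e}}M$ by $\Omega_{\Pi^{\rm e}}\Pi$ (the kernel $K'$ lives in $\gr\proj_{\leq 0}\Pi^{\rm e}$, so $\Hom$ into $\Pi^{\rm e}(-1)$ vanishes), yielding $\Omega^d_{\Lambda^{\rm e}}\Lambda \iso \Ext^1_{\gr\Pi^{\rm e}}(\Pi,\Pi^{\rm e}(-1))$, which is zero by (3). This is where Condition~(3) enters, and it does so precisely as a bimodule statement; your attempt to route it through one-sided modules loses this.
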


\begin{proof}

It suffices to show that $\pd_{\Lambda^{\rm e}} \Lambda \leq d-1$. We have 
the following isomorphisms of $\Lambda^{\rm e}$-modules:
\begin{align*}
\Omega_{\Lambda^{\rm e}}^d \Lambda & \iso (\Omega_{\Pi^{\rm e}}^d \Pi)_0 \\
& \iso (\Omega_{\Pi^{\rm e}}^{-1} \Omega_{\Pi^{\rm e}}^{d+1} \Pi)_0 && \text{since }\Omega_{\Pi^{\rm e}}^d\Pi\text{ is CM (Assumption~(1))} \\
& \iso (\Omega_{\Pi^{\rm e}}^{-1} \Hom_{\Pi^{\rm e}}(M, \Pi^{\rm e})(-1))_0 && \text{by Observation~\ref{obs.eq_CY}} \\
& \iso (\Hom_{\Pi^{\rm e}}(\Omega_{\Pi^{\rm e}} M, \Pi^{\rm e})(-1))_0 \\
& = \Hom_{\gr \Pi^{\rm e}}(\Omega_{\Pi^{\rm e}} M, \Pi^{\rm e}(-1)) 
\end{align*}
From the short exact sequence $K\mono M\epi \Pi$, it is easy to construct a short exact sequence $K'\mono \Omega_{\Pi^{\rm e}}M\epi \Omega_{\Pi^{\rm e}}\Pi$ with $K'$ having a projective resolution in $\gr\proj_{\leq 0} \Pi^{\rm e}$. Thus $\Hom_{\gr \Pi^{\rm e}}(K',\Pi^{\rm e}(-1))$ vanishes and we have \[ \Hom_{\gr \Pi^{\rm e}}(\Omega_{\Pi^{\rm e}} M, \Pi^{\rm e}(-1))  \iso \Hom_{\gr \Pi^{\rm e}}(\Omega_{\Pi^{\rm e}} \Pi, \Pi^{\rm e}(-1)).\] Therefore we have \[\Omega^d_{\Lambda^{\rm e}}(\Lambda)\iso \Ext^1_{\gr \Pi^{\rm e}}(\Pi,\Pi^{\rm e}(-1))=0\] by assumption (3).
\end{proof}

\subsection{Proof of Theorem~\ref{thm.21}}

We start with a technical lemma that will be useful.

\begin{lemma} \label{lemma.tech_prep}
In the setup of Theorem~\ref{thm.21} we have an isomorphism
\[ {\rm H}^i(\Pi_{> 0} \otimes^{\L}_{\Pi} \Lambda) \iso {\rm H}^{i + d}(\Pi(-1) \otimes^{\L}_{\Lambda} \R\Hom_{\Lambda}(D \Lambda, \Lambda)) \quad  \text{in } \gr(\Pi\otimes \Lambda^{\rm op}) \]
 for all $i \geq -d+g+1$, where $g$ is the Gorenstein dimension of $\Pi$.

In particular
\[ {\rm H}^{-1}(\Pi_{> 0} \otimes^{\L}_{\Pi} \Lambda) \iso \Pi(-1) \otimes_{\Lambda} \Ext_{\Lambda}^{d-1}(D \Lambda, \Lambda) \]
and
\[ {\rm H}^{i}(\Pi_{> 0} \otimes^{\L}_{\Pi} \Lambda) = 0 \quad \forall i \geq 0. \]
\end{lemma}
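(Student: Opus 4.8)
The plan is to analyze the semiorthogonal decomposition triangle $\Pi_{\leq 0} \to \Pi \to \Pi_{>0} \to$ from Proposition~\ref{prop.semiorth} after applying $- \otimes^{\L}_{\Pi} \Lambda$, using the bimodule Calabi-Yau isomorphism of Observation~\ref{obs.eq_CY} to rewrite $\Pi_{>0}$ in terms of $\R\Hom_{\Pi^{\rm e}}(\Pi, \Pi^{\rm e})$. First I would note that $\Pi_{\leq 0}$ is a bounded complex of bimodules in $\gr\proj_{\leq 0}\Pi^{\rm e}$, hence $\Pi_{\leq 0} \otimes^{\L}_{\Pi} \Lambda$ is a bounded complex of modules of the form $\Pi(i) \otimes \Lambda^{\rm op}$ with $i \geq 0$, i.e.\ lives in $\gr\proj_{\leq 0}(\Pi \otimes \Lambda^{\rm op})$; in particular it is concentrated in finitely many degrees and, more importantly, its cohomology is concentrated in non-positive internal degrees. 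On the other hand $\Pi \otimes^{\L}_{\Pi} \Lambda \iso \Lambda$ is concentrated in internal degree $0$. Chasing the long exact cohomology sequence of the triangle, the high-degree cohomology of $\Pi_{>0} \otimes^{\L}_{\Pi} \Lambda$ should agree with that of $\Pi_{\leq 0}\otimes^{\L}_{\Pi}\Lambda$ shifted, which we can identify via duality.

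More concretely, the computational heart is: $\Pi_{>0}$, being the "positive part" of a projective bimodule resolution of $\Pi$, is by Observation~\ref{obs.eq_CY} and Condition~(2) closely tied to $\R\Hom_{\Pi^{\rm e}}(\Pi,\Pi^{\rm e})[d+1] \iso \Pi(1)$ in $\stgrCM(\Pi^{\rm e})$. Dualizing and using that $\R\Hom_{\Pi^{\rm e}}(\Pi,\Pi^{\rm e}) \otimes^{\L}_{\Pi} \Lambda \iso \R\Hom_{\Lambda^{\rm e}}(\Lambda, \Pi \otimes \Lambda^{\rm op})$ type identities (the standard "base change for $\R\Hom$ into a projective bimodule" together with $\Pi \otimes^{\L}_{\Pi} \Lambda \iso \Lambda$ on the appropriate side), I expect to obtain that $\Pi_{>0} \otimes^{\L}_{\Pi} \Lambda$ is, in the relevant range of cohomological degrees, isomorphic to $\Pi(-1) \otimes^{\L}_{\Lambda} \R\Hom_{\Lambda}(D\Lambda, \Lambda)$ shifted by $-d$. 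The restriction $i \geq -d + g + 1$ is exactly what is needed so that the error terms — coming from $\Pi_{\leq 0}$ and from the Cohen-Macaulay replacement $K \mono M \epi \Pi$, both of which have bounded projective dimension governed by $g$ — do not contribute: below that range the syzygy and replacement corrections kick in, above it the triangle isolates the $\R\Hom$ term cleanly.

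For the "in particular" statements: once the degree-$i$ isomorphism is established for $i \geq -d+g+1$, I would take $i = -1$ (legitimate since $g \leq d-2$ forces $-d+g+1 \leq -1$), giving ${\rm H}^{-1}(\Pi_{>0}\otimes^{\L}_{\Pi}\Lambda) \iso {\rm H}^{d-1}(\Pi(-1)\otimes^{\L}_{\Lambda}\R\Hom_{\Lambda}(D\Lambda,\Lambda))$; since $\R\Hom_{\Lambda}(D\Lambda,\Lambda)$ has cohomology only in degrees $0, \dots$ up to $\gldim\Lambda \leq d-1$, and $\Pi$ is flat (projective) over $\Lambda$ on the appropriate side, this ${\rm H}^{d-1}$ is just $\Pi(-1) \otimes_{\Lambda} \Ext^{d-1}_{\Lambda}(D\Lambda,\Lambda)$. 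For $i \geq 0$: the right-hand side is ${\rm H}^{i+d}$ of a complex with no cohomology in degrees $> d-1 \geq i+d$ only if — wait, more carefully — $\R\Hom_{\Lambda}(D\Lambda,\Lambda)$ is concentrated in degrees $0$ through $d-1$, so tensoring over $\Lambda$ with the flat module $\Pi(-1)$ keeps cohomology in degrees $\leq d-1$, hence ${\rm H}^{i+d} = 0$ whenever $i + d > d-1$, i.e.\ whenever $i \geq 0$; this gives the vanishing.

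The main obstacle I anticipate is the careful bookkeeping in establishing the degree-$i$ isomorphism: one must track three separate sources of correction terms — the bounded complex $\Pi_{\leq 0}\otimes^{\L}_{\Pi}\Lambda$, the Cohen-Macaulay replacement sequence $K \mono M \epi \Pi$ (needed to make the $\stgrCM$ isomorphism of Observation~\ref{obs.eq_CY} into an honest statement about syzygies), and the passage between $\R\Hom_{\Pi^{\rm e}}(-,\Pi^{\rm e})$ and $\R\Hom_{\Lambda^{\rm e}}(-,-)$ under $- \otimes^{\L}_{\Pi}\Lambda$ — and verify that each contributes only in cohomological degrees below $-d+g+1$. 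The flatness/projectivity of $\Pi$ over $\Lambda$ (which holds because $\Pi = \bigoplus_i \Pi_i$ with each $\Pi_i$ a $\Lambda$-bimodule, and more to the point the semiorthogonal decomposition is engineered precisely so the relevant tensor factors are projective) is what makes the final identification of cohomology with an ordinary tensor product $\Pi(-1) \otimes_{\Lambda} \Ext^{d-1}_{\Lambda}(D\Lambda,\Lambda)$ go through.
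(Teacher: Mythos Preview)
Your outline points in the right direction and matches the paper's strategy at a coarse level --- use the Cohen--Macaulay replacement plus Observation~\ref{obs.eq_CY} to rewrite $\Pi_{>0}$, and then identify the result with $\Pi(-1)\otimes_\Lambda^{\L}\R\Hom_\Lambda(D\Lambda,\Lambda)$ after a shift --- but the middle step is where the actual content lies, and there your proposal has a gap.

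The passage from $\R\Hom_{\Pi^{\rm e}}(-,\Pi^{\rm e})$ to $\R\Hom_\Lambda(D\Lambda,\Lambda)$ is not a generic ``base change for $\R\Hom$'' identity. What the paper does is: take the triangle $\Omega^{d+1}_{\Pi^{\rm e}}\Pi[d]\to P\to \Pi$ with $P$ the first $d{+}1$ terms of a projective $\Pi^{\rm e}$-resolution of $\Pi$, apply $(\R\Hom_{\Pi^{\rm e}}(-[-d],\Pi^{\rm e})(-1))_{>0}$, and observe that the term coming from $\Pi$ is concentrated in cohomological degrees $\leq -d+g$ (this is exactly where the bound $i\geq -d+g+1$ comes from). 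The remaining term is $(\Hom_{\Pi^{\rm e}}(P,\Pi^{\rm e})(-1))_{>0}$, and the key identification is
\[
(\Hom_{\Pi^{\rm e}}(P,\Pi^{\rm e})(-1))_{>0}\;\iso\;\Hom_{\Pi^{\rm e}}(P_{\leq 0},\Pi^{\rm e})(-1),
\qquad
P_{\leq 0}=\Pi\otimes_\Lambda Q\otimes_\Lambda\Pi,
\]
where $Q$ is a projective $\Lambda^{\rm e}$-resolution of $\Lambda$. This holds simply because $\Pi$ is positively graded, so the degree-zero part of each projective $\Pi^{\rm e}$-module is $\Pi\otimes(\text{projective }\Lambda^{\rm e}\text{-module})\otimes\Pi$; and since $\gldim\Lambda\leq d-1$, the complex $Q$ really is a resolution. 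After tensoring with $\Lambda$ over $\Pi$ on the right, this becomes $\Pi(-1)\otimes_\Lambda\Hom_{\Lambda^{\rm e}}(Q,\Lambda^{\rm e})$, which represents $\Pi(-1)\otimes_\Lambda^{\L}\R\Hom_\Lambda(D\Lambda,\Lambda)$. Your proposal does not articulate this step, and your ``base change'' formulation does not obviously reduce to it.

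Two smaller points. First, your assertion that $\Pi$ is flat (or projective) over $\Lambda$ is neither proved nor known at this stage of the argument, and in fact it is not needed: for the ``in particular'' claims, right exactness of $\Pi(-1)\otimes_\Lambda -$ suffices to compute the top cohomology ${\rm H}^{d-1}$, and the vanishing for $i\geq 0$ follows because $\R\Hom_\Lambda(D\Lambda,\Lambda)$ is concentrated in degrees $\leq d-1$ and derived tensor cannot push cohomology upward. Second, in your first paragraph you seem to conflate internal grading degree with cohomological degree when discussing $\Pi_{\leq 0}\otimes_\Pi^{\L}\Lambda$; the paper's proof of this lemma does not use that triangle at all but works directly with $\Pi_{>0}$ (the triangle enters only in the subsequent Proposition~\ref{prop.tensor_is_geq1}).
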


\begin{proof}
Since $K$ has a projective resolution with terms in $\gr\proj_{\leq 0} \Pi^{\rm e}$ we have
\[ \Pi_{> 0} \iso M_{> 0} \quad \textrm{in }{\rm D^b}(\gr \Pi^{\rm e}).\]
From Observation~\ref{obs.eq_CY} we know that the graded Cohen-Macaualy $\Pi^{\rm e}$-modules $M$ and $\Hom_{\Pi^{\rm e}}(\Omega_{\Pi^{\rm e}}^{d+1} \Pi, \Pi^{\rm e})(-1)$ are isomorphic up to projective summands generated in degree $0$. Thus
\[ \Pi_{> 0} \iso (\Hom_{\Pi^{\rm e}}(\Omega_{\Pi^{\rm e}}^{d+1} \Pi, \Pi^{\rm e})(-1))_{> 0}. \]

Next we denote by $P$ the complex formed by the first $d+1$ terms of a graded projective resolution of $\Pi$ as $\Pi^{\rm e}$-module. Thus we have a triangle
\[ \Omega_{\Pi^{\rm e}}^{d+1}\Pi[d] \to P \to \Pi \to \qquad \text{in }{\rm D^b}(\gr\Pi^{\rm e}).\]
Applying the functor $(\R \Hom_{\Pi^{\rm e}}(- [-d], \Pi^{\rm e})(-1))_{> 0}$ we obtain 
\[ (\R \Hom_{\Pi^{\rm e}}(\Pi[-d], \Pi^{\rm e})(-1))_{> 0} \to (\Hom_{\Pi^{\rm e}}(P[-d], \Pi^{\rm e})(-1))_{> 0} \to  \Pi_{> 0} \to .\]
Observe that
since $\Omega^g_{\Pi^{\rm e}}\Pi$ is Cohen-Macaulay, the complex $\R\Hom_{\Pi^{\rm e}}(\Omega^g_{\Pi^{\rm e}}\Pi,\Pi^{\rm e})$ is concentrated in homological degree $0$, hence the complex $\R\Hom_{\Pi^{\rm e}}(\Pi,\Pi^{\rm e})$ is concentrated in homological degrees $0,\ldots,g$. Therefore the complex
\[ \R \Hom_{\Pi^{\rm e}}(\Pi[-d], \Pi^{\rm e})(-1) = \R \Hom_{\Pi^{\rm e}}(\Pi, \Pi^{\rm e})[d](-1) \]
is concentrated in homological degrees $-d, \ldots, -d+g$. Thus also $\R \Hom_{\Pi^{\rm e}}(\Pi[-d], \Pi^{\rm e})(-1) \otimes_{\Pi}^{\L} \Lambda$ is concentrated in homological degrees $\leq -d+g$.

It follows that for $i \geq -d+g+1$ we have
\begin{align}\label{eq1}
{\rm H}^i(\Pi_{> 0} \otimes^{\L}_{\Pi} \Lambda) & \iso {\rm H}^i(\Hom_{\Pi^{\rm e}}(P[-d] , \Pi^{\rm e})(-1))_{> 0} \otimes^{\L}_{\Pi} \Lambda) \\
\notag & \iso {\rm H}^{i+d}((\Hom_{\Pi^{\rm e}}(P, \Pi^{\rm e})(-1))_{> 0} \otimes^{\L}_{\Pi} \Lambda).
\end{align}

Moreover observe that
\[ (\Hom_{\Pi^{\rm e}}(P, \Pi^{\rm e})(-1))_{> 0}\iso \Hom_{\Pi^{\rm e}}(P,\Pi^{\rm e})_{>-1}(-1)\iso \Hom_{\Pi^{\rm e}}(P_{\leq 0}, \Pi^{\rm e})(-1). \]
Since $\Pi$ is a positively graded algebra, $P_{\leq 0}$ is generated in degree $0$. Hence $P_{\leq 0} = \Pi \otimes Q \otimes \Pi$, where $Q$ is the complex formed by the first $(d+1)$ terms of a projective resolution of $\Pi_0=\Lambda$ as $\Lambda^{\rm e}$-module. Since $\gldim \Lambda\leq d-1$, $Q$ is a projective resolution of $\Lambda$ as $\Lambda^{\rm e}$-module, this leads to
\begin{equation}\label{eq2} (\Hom_{\Pi^{\rm e}}(P, \Pi^{\rm e})(-1))_{> 0} \iso \Pi \otimes_{\Lambda} \Hom_{\Lambda^{\rm e}}(Q, \Lambda^{\rm e}) \otimes_{\Lambda} \Pi (-1). \end{equation}

Combining \eqref{eq1} and \eqref{eq2} we obtain
\begin{align*}
{\rm H}^i(\Pi_{> 0} \otimes^{\L}_{\Pi} \Lambda) & \iso {\rm H}^{i+d}((\Hom_{\Pi^{\rm e}}(P, \Pi^{\rm e})(-1))_{> 0} \otimes^{\L}_{\Pi} \Lambda) \\
& \iso {\rm H}^{i+d}(\Pi \otimes_{\Lambda} \Hom_{\Lambda^{\rm e}}(Q, \Lambda^{\rm e}) \otimes_{\Lambda} \Pi (-1) \otimes_{\Pi} \Lambda) \\
& \iso {\rm H}^{i+d}(\Pi(-1) \otimes^{\L}_{\Lambda} \R\Hom_{\Lambda^{\rm e}}(\Lambda, \Lambda^{\rm e}) ) \\
& \iso {\rm H}^{i+d}(\Pi(-1) \otimes^{\L}_{\Lambda} \R\Hom_{\Lambda}(D \Lambda, \Lambda) )
\end{align*}
for $i \geq-d+g+1$.

Now since $g\leq d-2$, we have $g-d+1\leq -1$. Moreover the global dimension of $\Lambda$ is at most $d-1$, so the complex $\R\Hom_{\Lambda}(D\Lambda,\Lambda)$ is concentrated in degrees $\leq d-1$. Hence we have 
\begin{align*}{\rm H}^{i+d}(\Pi (-1)\otimes^{\L}_{\Lambda}\R\Hom_{\Lambda}(D\Lambda,\Lambda)) &\iso \Pi(-1)\otimes_{\Lambda}\Ext^{d-1}(D\Lambda,\Lambda) &&\textrm{for }i=-1\\ & =0 && \textrm{for }i\geq 0.\end{align*} This finishes the proof.
\end{proof}

We now obtain the following short exact sequence, which is an essential ingredient to our proof.

\begin{proposition} \label{prop.tensor_is_geq1}
In the setup of Theorem~\ref{thm.21} we have a short exact sequence of graded $\Pi^{\op} \otimes \Lambda$-modules
\[ \Pi(-1) \otimes_{\Lambda} \Ext_{\Lambda}^{d-1}(D \Lambda, \Lambda) \mono \Pi \epi \Lambda, \]
where $\Pi\epi\Lambda$ is the natural projection.
\end{proposition}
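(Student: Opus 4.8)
The idea is to extract this short exact sequence from the semiorthogonal triangle $\Pi_{\leq 0} \to \Pi \to \Pi_{> 0} \to$ of Proposition~\ref{prop.semiorth} (for the algebra $\Pi^{\rm e}$), after applying $- \otimes^{\L}_{\Pi} \Lambda$. Concretely, I would tensor the triangle with $\Lambda$ over $\Pi$ to get
\[ \Pi_{\leq 0} \otimes^{\L}_{\Pi} \Lambda \to \Pi \otimes^{\L}_{\Pi} \Lambda \to \Pi_{> 0} \otimes^{\L}_{\Pi} \Lambda \to \qquad \text{in } {\rm D^b}(\gr(\Pi \otimes \Lambda^{\rm op})), \]
and then read off the long exact cohomology sequence. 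The middle term is simply $\Pi \otimes_{\Pi} \Lambda = \Lambda$, concentrated in degree $0$. So everything comes down to controlling the cohomology of the two outer terms.

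\textbf{Key steps.} First, Lemma~\ref{lemma.tech_prep} already gives exactly what is needed for $\Pi_{> 0} \otimes^{\L}_{\Pi} \Lambda$: its cohomology vanishes in all degrees $\geq 0$, and in degree $-1$ it is $\Pi(-1) \otimes_{\Lambda} \Ext^{d-1}_{\Lambda}(D\Lambda, \Lambda)$. Second, I must check that $\Pi_{\leq 0} \otimes^{\L}_{\Pi} \Lambda$ is concentrated in cohomological degree $0$ (where it then must equal $\Lambda$ as well, in fact the identity on it). For this, recall from the proof of Lemma~\ref{lemma.tech_prep} that $P_{\leq 0} = \Pi \otimes_{\Lambda} Q \otimes_{\Lambda} \Pi$ where $Q$ is a projective resolution of $\Lambda$ over $\Lambda^{\rm e}$ (using $\gldim \Lambda \leq d-1$); tensoring $\Pi \otimes_{\Lambda} Q \otimes_{\Lambda} \Pi$ with $\Lambda$ over $\Pi$ on the right gives $\Pi \otimes_{\Lambda} Q$, which computes $\Pi \otimes^{\L}_{\Lambda} \Lambda = \Pi$, concentrated in degree $0$. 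Wait --- one must be careful that $\Pi_{\leq 0}$ is the degree-$\leq 0$ part of the \emph{full} projective bimodule resolution, which is $\R\Hom$-acyclic only after the semiorthogonal splitting; but this is precisely what Proposition~\ref{prop.semiorth} and the identity $(-)_{\leq 0} \in {\rm K^b}(\gr\proj_{\leq 0} \Pi^{\rm e})$ provide, so $\Pi_{\leq 0} \otimes^{\L}_{\Pi} \Lambda$ is computed by $P_{\leq 0} \otimes_{\Pi} \Lambda = \Pi \otimes_{\Lambda} Q$ and is therefore concentrated in degree $0$. Finally, the long exact cohomology sequence then reads: $0 = {\rm H}^{-1}(\Pi_{\leq 0} \otimes^{\L}_{\Pi}\Lambda) \to {\rm H}^{-1}(\Lambda) = 0 \to {\rm H}^{-1}(\Pi_{>0}\otimes^{\L}_{\Pi}\Lambda) \to {\rm H}^0(\Pi_{\leq 0}\otimes^{\L}_{\Pi}\Lambda) \to {\rm H}^0(\Lambda) \to {\rm H}^0(\Pi_{>0}\otimes^{\L}_{\Pi}\Lambda) = 0$, i.e.
\[ 0 \to \Pi(-1) \otimes_{\Lambda} \Ext^{d-1}_{\Lambda}(D\Lambda, \Lambda) \to \Pi \to \Lambda \to 0, \]
and all higher and lower terms vanish, so this is the desired short exact sequence.

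\textbf{Identifying the map $\Pi \to \Lambda$.} The remaining point is to check that the surjection ${\rm H}^0(\Pi_{\leq 0} \otimes^{\L}_{\Pi} \Lambda) \to \Lambda$ in the sequence is the natural projection $\Pi \to \Pi_0 = \Lambda$. This follows by naturality: the map $\Pi_{\leq 0} \to \Pi$ in the semiorthogonal triangle is, on ${\rm H}^0$, the identity component picking out the degree-$0$-generated part, and after tensoring with $\Lambda$ over $\Pi$ it becomes the canonical augmentation $\Pi \twoheadrightarrow \Lambda$ (note ${\rm H}^0(\Pi_{\leq 0} \otimes^{\L}_{\Pi} \Lambda) \iso \Pi$ as computed above, with the comparison map to $\Lambda = \Pi \otimes_{\Pi} \Lambda$ being induced by $\Pi \to \Lambda$). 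Chasing degrees, the kernel is concentrated in degrees $\geq 1$, consistent with $\Pi(-1) \otimes_{\Lambda} \Ext^{d-1}_{\Lambda}(D\Lambda, \Lambda)$ being generated in degree $1$.

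\textbf{Main obstacle.} The technical heart is not the homological algebra of the long exact sequence --- that is formal once the outer terms are understood --- but rather the bookkeeping needed to justify that $\Pi_{\leq 0} \otimes^{\L}_{\Pi} \Lambda$ is concentrated in degree $0$ and canonically isomorphic to $\Pi$, together with the identification of the comparison map with the augmentation. This requires carefully tracking how the functorial triangle of Proposition~\ref{prop.semiorth} interacts with $- \otimes^{\L}_{\Pi} \Lambda$ and with the explicit description $P_{\leq 0} = \Pi \otimes_{\Lambda} Q \otimes_{\Lambda} \Pi$ already established inside the proof of Lemma~\ref{lemma.tech_prep}; the argument for $\gldim \Lambda \leq d-1$ (established in the preceding Proposition) is what makes $Q$ an honest resolution and hence ensures there is no higher cohomology contributed by the $\leq 0$ part.
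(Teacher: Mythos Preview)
Your overall strategy is exactly that of the paper: tensor the semiorthogonal triangle $\Pi_{\leq 0} \to \Pi \to \Pi_{>0}$ with $\Lambda$ over $\Pi$, use Lemma~\ref{lemma.tech_prep} to control the right-hand term, and read off the long exact sequence. Where your argument has a genuine gap is in the treatment of $\Pi_{\leq 0} \otimes^{\L}_{\Pi} \Lambda$.

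You invoke the identity $P_{\leq 0} = \Pi \otimes_{\Lambda} Q \otimes_{\Lambda} \Pi$ from the proof of Lemma~\ref{lemma.tech_prep}, but $P$ there is only the first $d+1$ terms of a projective resolution of $\Pi$, so $P_{\leq 0}$ is a priori not the same as $\Pi_{\leq 0}$. From the triangle $\Omega_{\Pi^{\rm e}}^{d+1}\Pi[d] \to P \to \Pi$ and functoriality of $(-)_{\leq 0}$ one gets $(\Omega_{\Pi^{\rm e}}^{d+1}\Pi)_{\leq 0}[d] \to P_{\leq 0} \to \Pi_{\leq 0}$, and the identification $\Pi_{\leq 0} \iso P_{\leq 0}$ you want holds precisely when $(\Omega_{\Pi^{\rm e}}^{d+1}\Pi)_{\leq 0} = 0$. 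Your appeal to Proposition~\ref{prop.semiorth} only tells you that $\Pi_{\leq 0}$ lies in ${\rm K^b}(\gr\proj_{\leq 0}\Pi^{\rm e})$; it does not by itself force the higher syzygy to have trivial $(-)_{\leq 0}$ part. The paper closes this gap using the Calabi-Yau hypothesis via Observation~\ref{obs.eq_CY}: $\Omega_{\Pi^{\rm e}}^{d+1}\Pi$ agrees with $\Hom_{\Pi^{\rm e}}(M,\Pi^{\rm e})(-1)$ up to projectives generated in degree $1$, and the latter is seen to satisfy $(-)_{\leq 0} = 0$ by passing to $\Hom_{\Pi^{\rm e}}(\Pi,\Pi^{\rm e})(-1)$ (whose graded pieces live in degree $\geq 1$).

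There is an alternative route that avoids the CY input altogether: work with the \emph{full} graded projective $\Pi^{\rm e}$-resolution $\tilde P$ of $\Pi$. Since every term is generated in non-negative degree, taking degree-$0$ parts is exact and kills $\tilde P_{>0}$, so $Q' := (\tilde P_{\leq 0})_0 = (\tilde P)_0$ is a complex of projective $\Lambda^{\rm e}$-modules with cohomology $\Lambda$, i.e.\ a projective resolution of $\Lambda$. One then has $\Pi_{\leq 0} = \tilde P_{\leq 0} = \Pi \otimes_{\Lambda} Q' \otimes_{\Lambda} \Pi$ termwise, whence $\Pi_{\leq 0} \otimes^{\L}_{\Pi} \Lambda \iso \Pi \otimes_{\Lambda} Q' \iso \Pi$. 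This is perhaps what you had in mind, but it is not the argument you wrote; your text still routes through the truncated $P$ of Lemma~\ref{lemma.tech_prep} and leaves the comparison unjustified.
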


\begin{proof}
Consider $\Pi$ as object in ${\rm D^b}(\gr \Pi^{\rm e})$, and the triangle
\[ \Pi_{\leq 0} \to \Pi \to \Pi_{> 0} \to \]
given by Proposition~\ref{prop.semiorth}. Applying the functor $- \otimes_{\Pi}^{\L} \Lambda$ we obtain a triangle
\[ \Pi_{\leq 0} \otimes_{\Pi}^{\L} \Lambda \to \Pi \otimes_{\Pi}^{\L} \Lambda \to \Pi_{> 0} \otimes_{\Pi}^{\L} \Lambda \to \quad \textrm{in }{\rm D^b}(\gr \Pi\otimes \Lambda^{\op}).\]
Recall that from Observation \ref{obs.eq_CY}, $\Omega^{d+1}_{\Pi^{\rm e}}\Pi$ and $\Hom_{\Pi^{\rm e}}(M,\Pi^{\rm e})(-1)$ are isomorphic up to projective summands generated in degree $1$. Hence we have the following isomorphisms in ${\rm D^b}(\gr \Pi^{\rm e})$:
\begin{align*}(\Omega^{d+1}_{\Pi^{\rm e}}\Pi)_{\leq 0} & \iso (\Hom_{\Pi^{\rm e}}(M,\Pi^{\rm e})(-1))_{\leq 0} &&
\\ & \iso (\Hom_{\Pi^{\rm e}}(\Pi, \Pi^{\rm e})(-1))_{\leq 0} && \text{since the projective resolution} 
\\ &&& \text{of $K$ is generated in non-positive} 
\\ &&& \text{degrees}\\ &=0&&\text{since $\Pi$ is a positively graded algebra.}
\end{align*}
So we get $\Pi_{\leq 0}\iso P_{\leq 0}=\Pi\otimes_\Lambda Q\otimes \Pi$ where $Q$ is a projective resolution of $\Lambda$ as $\Lambda^{\rm e}$-module. Therefore we obtain $\Pi_{\leq 0}\otimes^{\L}_{\Pi}\Lambda\iso \Pi$.

 Since we clearly have $\Pi \otimes_{\Pi}^{\L} \Lambda = \Lambda$ we obtain an exact sequence
\[ {\rm H}^{-1}(\Pi_{> 0} \otimes_{\Pi}^{\L} \Lambda) \mono \Pi \to \Lambda \epi {\rm H}^0(\Pi_{> 0} \otimes_{\Pi}^{\L} \Lambda). \]

Now the claim follows from the ``in particular'' part of Lemma~\ref{lemma.tech_prep} above.
\end{proof}

By Proposition~\ref{prop.tensor_is_geq1} we have an isomorphism of graded $\Pi \otimes \Lambda$-modules
\[ \Pi(-1) \otimes_\Lambda \Ext^{d-1}_\Lambda(D\Lambda,\Lambda) \iso \Pi_{>0}.\]
The next Proposition shows that this is enough to identify $\Pi$ as the tensor algebra $\Ten_\Lambda \Ext^{d-1}_\Lambda(D\Lambda, \Lambda)$ and thus finishes the proof of Theorem~\ref{thm.21}. 

\begin{proposition}
Let $\Pi$ be a positively graded ring, $\Lambda = \Pi_0$, and $X$ a $\Lambda \otimes \Lambda^{\rm op}$-module. Assume there is an isomorphism
\[ \Pi(-1) \otimes_\Lambda X \iso \Pi_{>0} \]
of graded $\Pi \otimes \Lambda^{\rm op}$-modules.

Then, as graded rings,
\[ \Pi \iso \Ten_{\Lambda}X. \]
\end{proposition}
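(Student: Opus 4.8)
The plan is to extract from the degree-one part of the given isomorphism a $\Lambda$-bimodule isomorphism $X\iso\Pi_1$, extend it by the universal property of the tensor algebra to a morphism of graded $\Lambda$-algebras $\Phi\colon\Ten_\Lambda X\to\Pi$, and then check that $\Phi$ is bijective in each degree by induction. Throughout, $X$ is regarded as concentrated in degree $0$, so that the source $\Pi(-1)\otimes_\Lambda X$ of the given isomorphism $\alpha$ has degree-$n$ part $\Pi_{n-1}\otimes_\Lambda X$, while the target $\Pi_{>0}=\bigoplus_{i>0}\Pi_i$ has degree-$n$ part $\Pi_n$ for $n\geq 1$ and $0$ for $n\leq 0$. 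The key observation is that, since $\alpha$ is left $\Pi$-linear and $\Pi(-1)\otimes_\Lambda X$ is generated as a left $\Pi$-module by $1\otimes X$, we have $\alpha(p\otimes x)=p\cdot\phi(x)$, where $\phi\colon X\to\Pi_1$ is defined by $\phi(x):=\alpha(1\otimes x)$. A routine verification, using left $\Pi$-linearity of $\alpha$ for elements of $\Lambda\subseteq\Pi$ and its right $\Lambda$-linearity, shows that $\phi$ is a homomorphism of $\Lambda$-bimodules; and comparing degree-one components --- where $(\Pi(-1)\otimes_\Lambda X)_1=\Lambda\otimes_\Lambda X\iso X$ and $(\Pi_{>0})_1=\Pi_1$ --- shows that $\phi$ is an isomorphism $X\iso\Pi_1$.

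Composing $\phi$ with the inclusion $\Pi_1\hookrightarrow\Pi$ and invoking the universal property of the tensor algebra, I obtain a morphism of graded $\Lambda$-algebras $\Phi\colon\Ten_\Lambda X\to\Pi$ whose degree-$n$ component is the iterated multiplication $x_1\otimes\cdots\otimes x_n\mapsto\phi(x_1)\cdots\phi(x_n)\in\Pi_n$. To see that $\Phi_n$ is an isomorphism for all $n$, I argue by induction on $n$, the cases $n=0$ (the identity of $\Lambda$) and $n=1$ (namely $\phi$) being already settled. For the inductive step, identify $X^{\otimes_\Lambda n}$ with $X^{\otimes_\Lambda(n-1)}\otimes_\Lambda X$; under this identification $\Phi_n$ factors as
\[ X^{\otimes_\Lambda(n-1)}\otimes_\Lambda X \overset{\ \Phi_{n-1}\otimes_\Lambda\mathrm{id}_X\ }{\longrightarrow} \Pi_{n-1}\otimes_\Lambda X \overset{\ \alpha_n\ }{\longrightarrow} \Pi_n, \]
since $\alpha_n(p\otimes x)=p\,\phi(x)$. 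The first arrow is an isomorphism because $\Phi_{n-1}$ is one by the inductive hypothesis and $-\otimes_\Lambda X$ is a functor, and the second arrow is the degree-$n$ component of the isomorphism $\alpha$. Hence $\Phi_n$ is an isomorphism, so $\Phi$ is an isomorphism of graded rings, as desired.

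I do not expect any genuine obstacle here. The two points that need a little care are the degree bookkeeping for the shift $\Pi(-1)$ --- this is exactly what matches degree $1$ of the source with $X$ and degree $1$ of the target with $\Pi_1$, thereby yielding the unshifted statement $X\iso\Pi_1$ --- and the identity $\alpha(p\otimes x)=p\,\phi(x)$, which converts the purely module-theoretic hypothesis into information about the multiplication of $\Pi$ and makes the inductive comparison of $\Phi_n$ with $\alpha_n$ possible.
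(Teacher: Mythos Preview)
Your proof is correct, and the underlying argument is the same as the paper's, just organised dually. The paper defines its map $\varphi$ iteratively by $\varphi_n = h_n\circ(\varphi_{n-1}\otimes\mathrm{id}_X)$, so bijectivity in each degree is immediate and the remaining work is to verify multiplicativity (which reduces, via an induction on $m$, to the case $m=1$, where it uses exactly your identity $h(p\otimes x)=p\cdot h(1\otimes x)$). You instead produce $\Phi$ from the universal property of the tensor algebra, so multiplicativity is free and the remaining work is bijectivity; your inductive factorisation $\Phi_n = \alpha_n\circ(\Phi_{n-1}\otimes\mathrm{id}_X)$ is literally the paper's recursive definition read backwards, and it relies on the same left $\Pi$-linearity observation. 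The two constructions in fact yield the same map, so there is no substantive difference in content.
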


\begin{proof}
Let $h \colon \Pi(-1) \otimes_\Lambda X \to^{\iso} \Pi_{>0}$ as in the assumption.

We define an isomorphism of graded $\Lambda \otimes \Lambda^{\rm op}$-modules
\[ \varphi \colon \Ten_{\Lambda} X \to \Pi \]
iteratedly by $\varphi_0 = {\rm id}_{\Lambda}$ and by letting $\varphi_n$ be the composition
\[ (\Ten_{\Lambda} X)_n = X^{\otimes_{\Lambda} n} \to[4]^{\varphi_{n-1} \otimes {\rm id}_X} \Pi_{n-1} \otimes_{\Lambda} X \iso \Pi_n, \]
where the last isomorphism is the degree $n$-part of $h$.

It only remains to check that $\varphi$ respects the ring-multiplication. It suffices to check that
\[ \varphi_{n+m}(f \otimes g) = \varphi_n(f) \varphi_m(g) \]
for any $f \in X^{\otimes_{\Lambda} n}$ and $g \in X^{\otimes_{\Lambda} m}$. We show this by induction on $m$. For $m = 0$ this is just the $\Lambda$-linearity of $\varphi_n$. For $m = 1$ we have
\[ \varphi_{n+1}(f \otimes g) \overset{\text{def}}{=} h(\varphi_n(f) \otimes g) = \varphi_n(f) h(1 \otimes g) \overset{\text{def}}{=} \varphi_n(f) \varphi_1(g).  \]
For $m > 1$ we may assume that $g = x \otimes g'$ for some $x \in X$. (An arbitrary element is a sum of elementary tensors, but since all maps involved are linear it suffices to consider a single elementary tensor.) Now
\begin{align*}
\varphi_{n+m}(f \otimes g) & = \varphi_{n+m}( f \otimes x \otimes g') \\
& = \varphi_{n+1}(f \otimes x) \varphi_{m-1}(g') && \text{by inductive assumption} \\
& = \varphi_n(f) \varphi_1(x) \varphi_{m-1}(g') && \text{by the case } m = 1 \\
& = \varphi_n(f) \varphi_1(x \otimes g') && \text{by inductive assumption} \\
& = \varphi_n(f) \varphi_m(g). && \qedhere
\end{align*}
\end{proof}

\begin{observation}
Since $\Pi$ is a finite-dimensional algebra, it follows that $\Lambda$ is $\tau_{d-1}$-finite, since this is equivalent to $\Ten_{\Lambda} \Ext_{\Lambda}^{d-1}(D \Lambda, \Lambda)$ being finite dimensional.
\end{observation}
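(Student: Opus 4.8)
The plan is to read off the conclusion from the graded ring isomorphism just established, together with the bimodule description of higher preprojective algebras recorded in the preliminaries. First I would invoke the Proposition immediately above (applied with $X = \Ext^{d-1}_{\Lambda}(D\Lambda, \Lambda)$), which supplies an isomorphism of graded rings $\Pi \iso \Ten_{\Lambda}\Ext^{d-1}_{\Lambda}(D\Lambda, \Lambda) = \Pi_d(\Lambda)$. Since $\Pi$ is finite-dimensional by the standing hypothesis of Theorem~\ref{thm.21}, the tensor algebra $\Pi_d(\Lambda)$ is finite-dimensional as well.

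It then remains only to translate finite-dimensionality of $\Pi_d(\Lambda)$ into $\tau_{d-1}$-finiteness of $\Lambda$, and this equivalence has already been observed in the preliminaries: one has the $\Lambda$-bimodule decomposition $\Pi_d(\Lambda) \iso \bigoplus_{p \geq 0}\tau_{d-1}^{-p}\Lambda$, in which each homogeneous component $\tau_{d-1}^{-p}\Lambda$ is finite-dimensional over $k$. Hence $\Pi_d(\Lambda)$ is finite-dimensional exactly when only finitely many of these components are nonzero, which is precisely the nilpotence of $\tau_{d-1}$, i.e. the $\tau_{d-1}$-finiteness of $\Lambda$. Combining this with the previous paragraph yields the claim.

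There is no genuine obstacle in this step: all of the substance is carried by the preceding Proposition, which identifies $\Pi$ with $\Pi_d(\Lambda)$, and by the elementary equivalence between finite-dimensionality of $\Pi_d(\Lambda)$ and $\tau_{d-1}$-finiteness of $\Lambda$ from the preliminaries. The only point to keep straight is that finite dimension of a nonnegatively graded algebra with finite-dimensional homogeneous pieces is equivalent to concentration in finitely many degrees, which is immediate from the direct-sum decomposition above.
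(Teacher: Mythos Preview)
Your proposal is correct and follows exactly the reasoning the paper intends: the Observation is not given a separate proof in the paper, as its justification is contained in the statement itself---namely, the just-established isomorphism $\Pi \iso \Ten_{\Lambda}\Ext^{d-1}_{\Lambda}(D\Lambda,\Lambda)$ together with the equivalence from the preliminaries between finite-dimensionality of $\Pi_d(\Lambda)$ and $\tau_{d-1}$-finiteness of $\Lambda$. Your write-up simply unpacks this in slightly more detail, which is fine.
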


\section{The Gorenstein dimension} \label{section3}

\subsection{Gorenstein dimension and cluster tilting subcategories}

In this section, we express the Gorenstein dimension of the algebra $\Pi$ of Theorem~\ref{thm.21} in terms of certain vanishing of extensions in the derived category ${\rm D^b}(\mod \Lambda)$. We start with the following result/definition:

\begin{theorem}[{\cite[5.4.2]{Ami08}\cite[1.22]{Iya11}}]
Let $\Lambda$ be a $\tau_{d-1}$-finite algebra. Then the category
\[ \mathscr{U} = \add \{ \mathbb{S}_{d-1}^i \Lambda \mid i \in \mathbb{Z} \} \subseteq {\rm D^b}(\mod \Lambda) \]
is a $(d-1)$-cluster tilting subcategory, that is \begin{align*}\mathscr{U} &=\{ X\in{\rm D^b}(\mod \Lambda) | \Ext^i_\Lambda(\mathscr{U},X)=0\ \forall i=1,\ldots,d-2\}\\ &=\{ X\in{\rm D^b}(\mod \Lambda) | \Ext^i_\Lambda(X,\mathscr{U})=0\ \forall i=1,\ldots,d-2\}.
\end{align*}
\end{theorem}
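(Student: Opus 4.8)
The plan is to verify directly the three defining properties of a $(d-1)$-cluster tilting subcategory of $\mathcal{T} := {\rm D^b}(\mod\Lambda)$: that $\mathscr{U}$ is functorially finite, that $\Hom_{\mathcal{T}}(\mathscr{U}, \mathscr{U}[i]) = 0$ for $0 < i < d-1$, and the two maximality equalities. Since $\gldim\Lambda < \infty$, the category $\mathcal{T}$ has a Serre functor $\mathbb{S} = -\overset{\L}{\otimes}_{\Lambda} D\Lambda$, and by definition $\mathbb{S} = \mathbb{S}_{d-1}[d-1]$; in particular $\mathbb{S}_{d-1}$ is an autoequivalence and $\mathbb{S}_{d-1}\mathscr{U} = \mathscr{U}$ by construction, hence $\mathbb{S}\mathscr{U} = \mathscr{U}[d-1]$. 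Serre duality then gives $\Ext^i_\Lambda(\mathscr{U}, X) = \Hom_{\mathcal{T}}(\mathscr{U}, X[i]) \iso D\Hom_{\mathcal{T}}(X, \mathbb{S}\mathscr{U}[-i]) = D\,\Ext^{d-1-i}_\Lambda(X, \mathscr{U})$, so the conditions $\Ext^i_\Lambda(\mathscr{U}, -) = 0$ and $\Ext^i_\Lambda(-, \mathscr{U}) = 0$ are exchanged under $i \mapsto d-1-i$; hence the two right-hand sides in the statement literally coincide, and it suffices to prove the self-orthogonality together with one of the two equalities.

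First I would dispose of the self-orthogonality and the inclusion $\mathscr{U} \subseteq \{X \mid \Ext^i_\Lambda(\mathscr{U}, X) = 0 \text{ for } 0 < i < d-1\}$, which need only cohomological amplitude estimates. Applying the autoequivalence $\mathbb{S}_{d-1}^{-a}$ and using that $\Lambda$ is projective, one has $\Hom_{\mathcal{T}}(\mathbb{S}_{d-1}^a\Lambda, \mathbb{S}_{d-1}^b\Lambda[i]) \iso \Hom_{\mathcal{T}}(\Lambda, \mathbb{S}_{d-1}^{b-a}\Lambda[i]) \iso {\rm H}^i(\mathbb{S}_{d-1}^{b-a}\Lambda)$, reducing everything to cohomology. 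An easy induction, using only $\gldim\Lambda \leq d-1$, shows that $\mathbb{S}_{d-1}^m\Lambda$ has cohomology concentrated in degrees $\geq d-1$ for $m > 0$, in degree $0$ for $m = 0$, and in degrees $\leq 0$ for $m < 0$; in every case ${\rm H}^i = 0$ for $0 < i < d-1$. This gives both the self-orthogonality and the inclusion of $\mathscr{U}$ in the two orthogonal subcategories. (When $d = 2$ the range of $i$ is empty and a $1$-cluster tilting subcategory is all of $\mathcal{T}$, so I would handle this case separately from the classical description of the derived category of a representation-finite hereditary algebra.)

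The real content, and the only place where the hypothesis that $\Lambda$ is $\tau_{d-1}$-finite is used, is functorial finiteness of $\mathscr{U}$ together with the reverse inclusion for the maximality. For functorial finiteness I would use that $\tau_{d-1}$-finiteness means $\Pi_d(\Lambda) = \bigoplus_{p \geq 0}\tau_{d-1}^{-p}\Lambda$ is finite dimensional, and --- applying this also to $\Lambda^{\rm op}$, whose $d$-preprojective algebra is $\Pi_d(\Lambda)^{\rm op}$ --- that for each $X \in \mod\Lambda$ one has $\tau_{d-1}^{\pm p}X = 0$ for $p \gg 0$; since $\Hom_{\mathcal{T}}(\mathbb{S}_{d-1}^j\Lambda, X) \iso \tau_{d-1}^{-j}X$ for $X$ a module, a d\'evissage over the cohomology modules of an arbitrary object of $\mathcal{T}$ then shows $\bigoplus_j \Hom_{\mathcal{T}}(\mathbb{S}_{d-1}^j\Lambda, X)$ and, dually, $\bigoplus_j \Hom_{\mathcal{T}}(X, \mathbb{S}_{d-1}^j\Lambda)$ are finite dimensional, so minimal right and left $\mathscr{U}$-approximations exist. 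For the maximality, given $X$ with $\Ext^i_\Lambda(\mathscr{U}, X) = 0$ for $0 < i < d-1$, I would take a minimal right $\mathscr{U}$-approximation $U \to X$, complete it to a triangle $U \to X \to Y \to$, use the $\Hom$-vanishing and the minimality to control the cohomology and the orthogonality of $Y$, and then run an induction on a suitable complexity measure of $X$ (for instance the total dimension of its cohomology) to force $Y = 0$, i.e.\ $X \in \mathscr{U}$. I expect this last step --- and the bookkeeping needed to make the approximation sequences and the inductive measure cooperate --- to be the main obstacle; it is exactly the point at which the higher Auslander--Reiten machinery of \cite{Iya11} is needed rather than mere amplitude bounds, and since the statement is quoted from \cite{Ami08, Iya11}, in the write-up I would either cite it directly or reproduce that argument.
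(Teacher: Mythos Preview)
The paper does not prove this theorem at all: it is stated with a citation to \cite[5.4.2]{Ami08} and \cite[1.22]{Iya11} and used as a black box, so there is no ``paper's own proof'' to compare against. You yourself note this in your closing sentence, and that is the right disposition for this statement in the present paper.

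That said, your sketch is a reasonable outline of the argument one finds in the cited sources. The Serre-duality reduction of the two orthogonality conditions to one another, and the amplitude bounds for $\mathbb{S}_{d-1}^m\Lambda$, are correct and standard. One small imprecision: the identification $\Hom_{\mathcal{T}}(\mathbb{S}_{d-1}^j\Lambda, X) \iso \tau_{d-1}^{-j}X$ is not literally true, since $\tau_{d-1}^{-j}$ is the $j$-fold iterate of $H^0\mathbb{S}_{d-1}^{-1}$ rather than $H^0\mathbb{S}_{d-1}^{-j}$; what you actually get is $H^0(\mathbb{S}_{d-1}^{-j}X)$, and the vanishing for $|j|\gg 0$ then follows from the amplitude estimates combined with $\tau_{d-1}$-finiteness (on both sides), which is enough for functorial finiteness. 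Your honest admission that the maximality step is where the genuine higher Auslander--Reiten theory enters, and that one should cite \cite{Iya11} for it, is exactly right; the inductive approximation-triangle strategy you sketch is indeed how those arguments proceed, but making the induction terminate requires more care than a single paragraph allows.
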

Note that if $\Pi$ is an algebra as in Theorem~\ref{thm.21}, its degree zero subalgebra $\Lambda$ is always $\tau_{d-1}$-finite, so the result above applies. The aim here is to express the Gorenstein dimension $g$ of $\Pi$ using the subcategory $\mathscr{U}$. More precisely the main result of this section is the following.

\begin{theorem}\label{thm.gdimU}
In the setup of Theorem~\ref{thm.21} the Gorenstein-dimension $g$ of $\Pi$ is given by
\[ g = d - 1 + \max \{ i < 0 \mid \Hom_{{\rm D^b}(\mod \Lambda)}(\mathscr{U}, \mathscr{U}[i]) \neq 0 \}.\]

\end{theorem}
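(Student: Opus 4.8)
The plan is to relate the Gorenstein dimension $g$ of $\Pi$ to the homological width of a projective bimodule resolution, and then to transport this information from $\gr\Pi^{\mathrm e}$ down to ${\rm D^b}(\mod\Lambda)$ using the machinery already built in the paper. First I would recall from the proof of Lemma~\ref{lemma.tech_prep} that, since $\Omega^g_{\Pi^{\mathrm e}}\Pi$ is Cohen-Macaulay and $\Omega^{g-1}_{\Pi^{\mathrm e}}\Pi$ is not (this is exactly what it means for the Gorenstein dimension to equal $g$, i.e.\ $\pd_{\Pi^{\mathrm e}}\Pi = g$; here one uses that over a Gorenstein algebra a module is Cohen-Macaulay iff it is a syzygy of arbitrarily high order, so $\Omega^g_{\Pi^{\mathrm e}}\Pi$ CM forces $\pd_{\Pi^{\mathrm e}}\Pi\le g$, and minimality gives the exact value), the complex $\R\Hom_{\Pi^{\mathrm e}}(\Pi,\Pi^{\mathrm e})$ is concentrated in homological degrees $0,\dots,g$ with nonzero cohomology in degree $g$. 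Combining this with Condition~(2) in the form $\R\Hom_{\Pi^{\mathrm e}}(\Pi,\Pi^{\mathrm e})[d+1]\iso\Pi(1)$ in $\stgrCM(\Pi^{\mathrm e})$, one reads off that $g$ is determined by how far $\R\Hom_{\Pi^{\mathrm e}}(\Pi,\Pi^{\mathrm e})$ sits away from being perfect.

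Next I would use Proposition~\ref{prop.tensor_is_geq1} together with the identifications already obtained in the proof of Lemma~\ref{lemma.tech_prep}, namely the isomorphism
\[ {\rm H}^{i}(\Pi_{>0}\otimes^{\L}_{\Pi}\Lambda)\iso {\rm H}^{i+d}\bigl(\Pi(-1)\otimes^{\L}_{\Lambda}\R\Hom_{\Lambda}(D\Lambda,\Lambda)\bigr)\qquad\text{for }i\ge -d+g+1, \]
but now pushing it one step further: instead of stopping at the ``in particular'' consequences, I would analyze the full range of $i$. The point is that $\R\Hom_{\Lambda}(D\Lambda,\Lambda)\iso \L\mathbb{S}_{d-1}^{-1}\Lambda[-d+1]$ up to the twist, so $\Pi(-1)\otimes^{\L}_{\Lambda}\R\Hom_{\Lambda}(D\Lambda,\Lambda)$ has cohomology governed by the higher $\Ext$-groups $\Ext^j_{\Lambda}(D\Lambda,\Lambda)$, which after tensoring over $\Lambda$ with $\Pi=\bigoplus_{p\ge 0}\tau_{d-1}^{-p}\Lambda$ reassemble into $\Hom_{{\rm D^b}(\mod\Lambda)}(\mathscr U,\mathscr U[i])$ for various $i$, via the description of $\mathscr U$ as $\add\{\mathbb{S}_{d-1}^i\Lambda\}$. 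The crucial bound is that the lowest homological degree in which $\Pi_{>0}\otimes^{\L}_\Pi\Lambda$ — equivalently $\Pi\otimes^{\L}_\Pi\Lambda = \Lambda$ via the triangle — has nonzero cohomology below degree $-1$ is precisely $-d+g+1$ (this is where the $\Omega^g$ being the first CM syzygy enters, as it controls the leftmost term of the projective resolution $P$), and ${\rm H}^{-d+g+1}$ of that object is therefore nonzero. Unwinding the isomorphism, ${\rm H}^{-d+g+1}(\Pi_{>0}\otimes^{\L}_\Pi\Lambda)\iso {\rm H}^{g+1}(\Pi(-1)\otimes^{\L}_\Lambda\R\Hom_\Lambda(D\Lambda,\Lambda))$ is built from $\Ext^{d-1-(g+1-d)}_\Lambda$-type terms, i.e.\ from $\Hom_{{\rm D^b}(\mod\Lambda)}(\mathscr U,\mathscr U[g+1-d])$; setting $i = g+1-d < 0$ gives $g = d-1+i$, and the maximality of $i$ among negative integers with $\Hom(\mathscr U,\mathscr U[i])\ne 0$ corresponds to $g$ being the honest (minimal) Gorenstein dimension.

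Concretely, the steps in order are: (i) show $g = \max\{\,j : {\rm H}^j(\R\Hom_{\Pi^{\mathrm e}}(\Pi,\Pi^{\mathrm e}))\ne 0\,\}$, using Gorensteinness and Lemma~\ref{lemma_syzygyCM}; (ii) translate via Condition~(2) and the triangle of Proposition~\ref{prop.semiorth} to a statement about the cohomological width of $\Pi_{>0}\otimes^{\L}_\Pi\Lambda$, refining the computation in Lemma~\ref{lemma.tech_prep} to identify exactly when its cohomology first becomes nonzero below degree $-1$; (iii) rewrite $\Pi(-1)\otimes^{\L}_\Lambda\R\Hom_\Lambda(D\Lambda,\Lambda)$ using $\Pi\iso\bigoplus_{p\ge0}\tau_{d-1}^{-p}\Lambda$ and $\R\Hom_\Lambda(D\Lambda,\Lambda)\simeq\mathbb{S}_{d-1}^{-1}\Lambda$ (up to shift), so that its cohomology in each degree is a direct sum of groups $\Hom_{{\rm D^b}(\mod\Lambda)}(\mathbb{S}_{d-1}^a\Lambda,\mathbb{S}_{d-1}^b\Lambda[i])$; (iv) identify these, summed over $a,b$, with $\Hom_{{\rm D^b}(\mod\Lambda)}(\mathscr U,\mathscr U[i])$ using that $\mathscr U=\add\{\mathbb{S}_{d-1}^i\Lambda\}$ and $\mathbb{S}_{d-1}$ is an autoequivalence; (v) combine (i)–(iv) to get the stated formula. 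The main obstacle I anticipate is step (ii)/(iv): keeping careful track of both the homological degree and the internal grading shift through the tensor product $\otimes^{\L}_\Pi\Lambda$, and making sure that the direct sum over $p$ (i.e.\ over powers of $\tau_{d-1}^{-1}$) genuinely exhausts all of $\mathscr U$ rather than a proper subcategory — one must check that positive powers $\mathbb{S}_{d-1}^{>0}\Lambda$ either contribute nothing new to the relevant $\Hom$'s or are accounted for by the autoequivalence symmetry of $\mathbb{S}_{d-1}$. A secondary subtlety is verifying that the first nonvanishing cohomology of $\Pi_{>0}\otimes^{\L}_\Pi\Lambda$ below degree $-1$ really does occur at $-d+g+1$ and not earlier, which requires knowing that $\R\Hom_{\Pi^{\mathrm e}}(\Pi,\Pi^{\mathrm e})$ has nonzero cohomology precisely in degree $g$ (not just $\le g$), i.e.\ that the syzygy bound is sharp.
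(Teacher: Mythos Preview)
Your broad strategy matches the paper's: first relate $g$ to the cohomology of $\Pi\otimes^{\L}_\Lambda\R\Hom_\Lambda(D\Lambda,\Lambda)$, then translate that into $\Hom_{\mathrm D^b}(\mathscr U,\mathscr U[i])$. However, step~(ii) contains a genuine error. You assert that $\Pi_{>0}\otimes^{\L}_\Pi\Lambda$ has nonzero cohomology in degree $-d+g+1$, but in fact this object has cohomology \emph{only} in degree $-1$: from the triangle $\Pi_{\le 0}\otimes^{\L}_\Pi\Lambda\to\Lambda\to\Pi_{>0}\otimes^{\L}_\Pi\Lambda\to{}$ together with $\Pi_{\le 0}\otimes^{\L}_\Pi\Lambda\iso\Pi$ (a module), one reads off $\mathrm H^i(\Pi_{>0}\otimes^{\L}_\Pi\Lambda)=0$ for $i\neq -1$. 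The paper uses exactly this \emph{vanishing} (not nonvanishing) to deduce $\mathrm H^j(\Pi\otimes^{\L}_\Lambda\R\Hom_\Lambda(D\Lambda,\Lambda))=0$ for $j\in\{g+1,\dots,d-2\}$. The critical nonvanishing is at degree $g$, not $g+1$, and it cannot be obtained from the isomorphism of Lemma~\ref{lemma.tech_prep}, since that isomorphism only holds for $i\ge -d+g+1$ and so misses the degree corresponding to $j=g$. The paper's Proposition~\ref{prop.g_in_terms_of_homology} handles this by going back to the triangle before tensoring with $\Lambda$ and tracing the nonvanishing to $\Ext^g_{\Pi^{\mathrm e}}(\Pi,\Pi^{\mathrm e})\neq 0$; your plan as written does not contain this mechanism.

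There is a second gap in steps~(iii)/(iv). You want to pass from $\Pi\otimes^{\L}_\Lambda\R\Hom_\Lambda(D\Lambda,\Lambda)\iso\bigoplus_{p\ge 0}\mathbb S_{d-1}^{-1}(\tau_{d-1}^{-p}\Lambda)[-d+1]$ to information about $\mathbb S_{d-1}^{-(p+1)}\Lambda$, i.e.\ about $\mathscr U$. But $\tau_{d-1}^{-p}\Lambda=\mathrm H^0(\mathbb S_{d-1}^{-p}\Lambda)$ differs from $\mathbb S_{d-1}^{-p}\Lambda$ by a truncation, so $\mathbb S_{d-1}^{-1}(\tau_{d-1}^{-p}\Lambda)$ and $\mathbb S_{d-1}^{-(p+1)}\Lambda$ do not have the same cohomology in general. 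The paper bridges this with an inductive argument (Lemmas~\ref{lem.prep_vosnex_1} and \ref{lem.prep_vosnex_2}) showing that the \emph{vanishing} conditions on the two sides are equivalent in the relevant range. The obstacle you anticipate about positive versus negative powers of $\mathbb S_{d-1}$ is a red herring; the real subtlety is this truncation, and it needs an explicit argument.
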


The proof of this theorem consists of two main steps: First, in Lemma~\ref{lemma.vosnex1} and Proposition~\ref{prop.g_in_terms_of_homology}, we calculate $g$ in terms of the non-vanishing of homologies of the complex $\Pi\otimes_{\Lambda}^{\L}\R \Hom_{\Lambda}(D\Lambda,\Lambda)$. Second we show that this description coincides with the right hand side term given in the theorem.

\begin{lemma} \label{lemma.vosnex1}
In the setup of Theorem~\ref{thm.21} we have
\[ {\rm H}^i(\Pi \otimes_{\Lambda}^{\L} \R \Hom_{\Lambda}(D \Lambda, \Lambda)) = 0 \quad \forall i \in \{g+1, \ldots, d-2 \}. \]
\end{lemma}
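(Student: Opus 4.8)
The plan is to combine the two tools already developed in the paper: the identification of the homology of the "positive part" $\Pi_{>0}\otimes^{\L}_\Pi\Lambda$ with the homology of $\Pi(-1)\otimes^{\L}_\Lambda\R\Hom_\Lambda(D\Lambda,\Lambda)$ from Lemma~\ref{lemma.tech_prep}, and the semiorthogonal-decomposition triangle $\Pi_{\leq 0}\to\Pi\to\Pi_{>0}\to$ of Proposition~\ref{prop.semiorth} after applying $-\otimes^{\L}_\Pi\Lambda$. First I would recall from the proof of Proposition~\ref{prop.tensor_is_geq1} that $\Pi_{\leq 0}\otimes^{\L}_\Pi\Lambda\iso\Pi$, concentrated in homological degree $0$, and $\Pi\otimes^{\L}_\Pi\Lambda=\Lambda$, also concentrated in degree $0$. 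Hence the long exact sequence in homology coming from the triangle $\Pi_{\leq 0}\otimes^{\L}_\Pi\Lambda\to\Pi\otimes^{\L}_\Pi\Lambda\to\Pi_{>0}\otimes^{\L}_\Pi\Lambda\to$ immediately gives ${\rm H}^i(\Pi_{>0}\otimes^{\L}_\Pi\Lambda)=0$ for all $i\geq 1$ (and the low-degree part is exactly the short exact sequence of Proposition~\ref{prop.tensor_is_geq1}).

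Next I would use Lemma~\ref{lemma.tech_prep}: for $i\geq -d+g+1$ we have ${\rm H}^i(\Pi_{>0}\otimes^{\L}_\Pi\Lambda)\iso{\rm H}^{i+d}(\Pi(-1)\otimes^{\L}_\Lambda\R\Hom_\Lambda(D\Lambda,\Lambda))$. Set $j=i+d$; then as $i$ ranges over $\{1,\dots,d-2-g\}$... wait, I want the target range $\{g+1,\dots,d-2\}$, so I take $j$ in that range, i.e. $i=j-d$ ranges over $\{g+1-d,\dots,-2\}$. The constraint $i\geq -d+g+1$ is exactly $j\geq g+1$, which holds throughout the target range, so the isomorphism of Lemma~\ref{lemma.tech_prep} applies. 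On the other side, $i=j-d$ satisfies $i\leq -2<1$, hmm — that's the wrong direction for the vanishing I extracted above, which only covered $i\geq 1$. So the naive combination does not immediately work; the point must be more subtle.

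The hard part, then, is to bridge the gap between the range $i\geq 1$ where ${\rm H}^i(\Pi_{>0}\otimes^{\L}_\Pi\Lambda)=0$ comes for free, and the negative range $i\in\{g+1-d,\dots,-2\}$ that corresponds to the homologies we actually want to kill. The resolution is that the defining relation between $g$ and the triangle is more refined: by Observation~\ref{obs.eq_CY}, up to projectives $M\iso\Hom_{\Pi^{\rm e}}(\Omega^{d+1}_{\Pi^{\rm e}}\Pi,\Pi^{\rm e})(-1)$ and $\Pi_{>0}\iso M_{>0}$, and because $\Omega^g_{\Pi^{\rm e}}\Pi$ is Cohen–Macaulay while $\Omega^{g-1}_{\Pi^{\rm e}}\Pi$ is not (that's what $g$ being the exact Gorenstein dimension means), the complex $\R\Hom_{\Pi^{\rm e}}(\Pi,\Pi^{\rm e})$ is concentrated in homological degrees $0,\dots,g$ with nonzero top homology. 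I would feed this precise concentration into the computation inside the proof of Lemma~\ref{lemma.tech_prep}: there the isomorphism ${\rm H}^i(\Pi_{>0}\otimes^{\L}_\Pi\Lambda)\iso{\rm H}^{i+d}(\Hom_{\Pi^{\rm e}}(P,\Pi^{\rm e})(-1))_{>0}\otimes^{\L}_\Pi\Lambda)$ actually holds for all $i\geq -d+g+1$ because $\R\Hom_{\Pi^{\rm e}}(\Pi[-d],\Pi^{\rm e})(-1)\otimes^{\L}_\Pi\Lambda$ is concentrated in homological degrees $\leq -d+g$ — the obstruction term $\R\Hom_{\Pi^{\rm e}}(\Pi[-d],\Pi^{\rm e})(-1)_{>0}$ only contributes in degrees $\leq -d+g$, which for $i\in\{g+1-d,\dots,-2\}$ means in degrees strictly below $i$. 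Thus on that range ${\rm H}^i(\Pi_{>0}\otimes^{\L}_\Pi\Lambda)$ is genuinely computed by the free part $(\Hom_{\Pi^{\rm e}}(P,\Pi^{\rm e})(-1))_{>0}\otimes^{\L}_\Pi\Lambda$, and from the triangle and the fact that $\Pi_{\leq 0}\otimes^{\L}_\Pi\Lambda$ and $\Pi\otimes^{\L}_\Pi\Lambda$ are concentrated in degree $0$ we get ${\rm H}^i(\Pi_{>0}\otimes^{\L}_\Pi\Lambda)=0$ for every $i\neq -1,0$ in that range — in particular for all $i\in\{g+1-d,\dots,-2\}$.

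Putting it together: for $j\in\{g+1,\dots,d-2\}$ set $i=j-d\in\{g+1-d,\dots,-2\}$; by the extended range of Lemma~\ref{lemma.tech_prep} (valid since $i\geq -d+g+1\iff j\geq g+1$) we have ${\rm H}^j(\Pi\otimes^{\L}_\Lambda\R\Hom_\Lambda(D\Lambda,\Lambda))\iso{\rm H}^j(\Pi(-1)\otimes^{\L}_\Lambda\R\Hom_\Lambda(D\Lambda,\Lambda))\iso{\rm H}^i(\Pi_{>0}\otimes^{\L}_\Pi\Lambda)=0$, where the first isomorphism uses that twisting by $(-1)$ does not affect homology as complexes of $\Pi\otimes\Lambda^{\rm op}$-modules (it is a degree shift in the internal grading only). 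That is exactly the claim. I would double-check that no issue arises at the endpoints $j=g+1$ and $j=d-2$: $j=g+1$ gives $i=g+1-d\leq -1$, and since $g\leq d-2$ we have $g+1-d\leq -1$, strictly $<-1$ unless $g=d-2$, in which case $i=-1$ and the statement about $\{g+1,\dots,d-2\}$ is the empty range, so there is nothing to prove; this confirms the argument is consistent. The genuine content and the only slightly delicate point is the observation that the "error" complex $\R\Hom_{\Pi^{\rm e}}(\Pi,\Pi^{\rm e})[d](-1)\otimes^{\L}_\Pi\Lambda$ lives in homological degrees $\leq -d+g$, which is precisely what the hypothesis $g\leq d-2$ (via Lemma~\ref{lemma.tech_prep}) is there to control, so I would state that step carefully and cite the relevant lines of the proof of Lemma~\ref{lemma.tech_prep}.
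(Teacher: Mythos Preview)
Your final argument is correct and is exactly the paper's approach: combine Lemma~\ref{lemma.tech_prep} (valid for $i\geq -d+g+1$, i.e.\ $j\geq g+1$) with the observation, from the triangle in the proof of Proposition~\ref{prop.tensor_is_geq1}, that ${\rm H}^i(\Pi_{>0}\otimes^{\L}_\Pi\Lambda)=0$ for all $i\neq -1$.

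However, your long middle detour is unnecessary and stems from a misstep. When you write that the long exact sequence ``immediately gives ${\rm H}^i(\Pi_{>0}\otimes^{\L}_\Pi\Lambda)=0$ for all $i\geq 1$'', you are underusing it: since both $\Pi_{\leq 0}\otimes^{\L}_\Pi\Lambda\iso\Pi$ and $\Pi\otimes^{\L}_\Pi\Lambda\iso\Lambda$ are concentrated in homological degree $0$, the long exact sequence kills ${\rm H}^i(\Pi_{>0}\otimes^{\L}_\Pi\Lambda)$ for \emph{every} $i\notin\{-1,0\}$, in particular for all $i\leq -2$. (And since $\Pi\to\Lambda$ is surjective, even $i=0$ vanishes.) You eventually state exactly this at the end of your third paragraph, but everything in between --- re-entering the proof of Lemma~\ref{lemma.tech_prep} and discussing the concentration of $\R\Hom_{\Pi^{\rm e}}(\Pi,\Pi^{\rm e})$ --- is redundant: that analysis is already packaged into Lemma~\ref{lemma.tech_prep}, which you may simply cite. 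The paper's proof is the two-sentence version of your last paragraph.
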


\begin{proof}
By Lemma~\ref{lemma.tech_prep} we have
\[ {\rm H}^i(\Pi \otimes_{\Lambda}^{\L} \R \Hom_{\Lambda}(D \Lambda, \Lambda)) = {\rm H}^{i-d}(\Pi_{> 0} \otimes_{\Pi}^{\L} \Lambda) \quad \forall i \geq g+1. \]
Looking at the proof of Proposition~\ref{prop.tensor_is_geq1} we see that
\[ {\rm H}^i(\Pi_{> 0} \otimes_{\Pi}^{\L} \Lambda) = 0 \quad \forall i \neq -1. \]
The claim follows from these two statements.
\end{proof}

We now prove a converse of Lemma~\ref{lemma.vosnex1}.

\begin{proposition} \label{prop.g_in_terms_of_homology}
In the setup of Theorem~\ref{thm.21} we have
\[ g = \max \{i \leq d-2 \mid {\rm H}^i(\Pi \otimes_{\Lambda}^{\L} \R \Hom_{\Lambda}(D \Lambda, \Lambda) )\neq 0 \}. \]
\end{proposition}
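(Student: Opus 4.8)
The plan is to prove the inequality $g \geq \max \{i \leq d-2 \mid {\rm H}^i(\Pi \otimes_{\Lambda}^{\L} \R\Hom_\Lambda(D\Lambda,\Lambda)) \neq 0\}$, since Lemma~\ref{lemma.vosnex1} already gives that nothing in the range $\{g+1,\ldots,d-2\}$ survives, hence $g$ is an upper bound for that maximum provided the homology actually is nonzero in degree $g$ (or below). So the statement splits into two halves: (i) the homology vanishes for $i > g$ with $i \leq d-2$ — this is exactly Lemma~\ref{lemma.vosnex1}; and (ii) the homology in degree $g$ is nonzero. I would concentrate on (ii).

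For (ii), recall from Lemma~\ref{lemma.tech_prep} that ${\rm H}^i(\Pi\otimes^{\L}_\Lambda\R\Hom_\Lambda(D\Lambda,\Lambda)) \iso {\rm H}^{i-d}(\Pi_{>0}\otimes^{\L}_\Pi\Lambda)$ for $i \geq g+1$, and, tracing the identification in its proof, more precisely that for $i-d \geq -d+g+1$, i.e. $i \geq g+1$, the homology is computed via $\Hom_{\Pi^{\rm e}}(P,\Pi^{\rm e})$. To reach degree $i = g$ itself I would push the argument of Lemma~\ref{lemma.tech_prep} one step further. The complex $\R\Hom_{\Pi^{\rm e}}(\Pi,\Pi^{\rm e})$ is concentrated in homological degrees $0,\ldots,g$, and its top cohomology ${\rm H}^g\R\Hom_{\Pi^{\rm e}}(\Pi,\Pi^{\rm e}) = \Ext^g_{\Pi^{\rm e}}(\Pi,\Pi^{\rm e})$ is non-zero precisely because $g = \pd_{\Pi^{\rm e}}\Pi$ (this uses $\Pi$ Gorenstein of dimension $g$, so $\pd_{\Pi^{\rm e}}\Pi = g$, and the top Ext of a module of finite projective dimension into a projective generator does not vanish). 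The plan is then to show that the comparison triangle obtained by applying $(\R\Hom_{\Pi^{\rm e}}(-[-d],\Pi^{\rm e})(-1))_{>0}$ to $\Omega^{d+1}_{\Pi^{\rm e}}\Pi[d]\to P\to\Pi\to$, after $-\otimes^{\L}_\Pi\Lambda$, identifies ${\rm H}^g$ of $\Pi\otimes^{\L}_\Lambda\R\Hom_\Lambda(D\Lambda,\Lambda)$ — equivalently ${\rm H}^{g-d}(\Pi_{>0}\otimes^{\L}_\Pi\Lambda)$ — with (a shift of) $\Pi\otimes_\Lambda{\rm H}^{g-1}\big(\R\Hom_{\Lambda^{\rm e}}(\Lambda,\Lambda^{\rm e})\big)$ via the isomorphisms \eqref{eq1} and \eqref{eq2}, and that the latter is non-zero because $g = \pd_{\Lambda^{\rm e}}\Lambda$ when $g\leq d-1$ (this is where the already-proved bound $\gldim\Lambda\leq d-1$ and the identification $\Omega^g_{\Lambda^{\rm e}}\Lambda \iso (\Omega^g_{\Pi^{\rm e}}\Pi)_0$ from the proof of the global-dimension proposition enter). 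Concretely: $\Ext^g_{\Lambda^{\rm e}}(\Lambda,\Lambda^{\rm e})\neq 0$ exactly when $\pd_{\Lambda^{\rm e}}\Lambda = g$, and I need to argue that $\pd_{\Lambda^{\rm e}}\Lambda$ cannot be strictly smaller than $g$: if it were $\leq g-1$ then $\Omega^g_{\Lambda^{\rm e}}\Lambda$ would be projective, hence so would $(\Omega^g_{\Pi^{\rm e}}\Pi)_0$, and I would need to derive a contradiction with $\pd_{\Pi^{\rm e}}\Pi = g$ — presumably via the degree-zero truncation interacting with $\gr\proj_{\leq 0}\Pi^{\rm e}$ and the semiorthogonal decomposition of Proposition~\ref{prop.semiorth}.

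So the order of steps I would carry out is: first recall $g = \pd_{\Pi^{\rm e}}\Pi$ and note $\Ext^g_{\Pi^{\rm e}}(\Pi,\Pi^{\rm e})\neq 0$; second, extend Lemma~\ref{lemma.tech_prep}'s computation to degree $i = g$ (checking that the homological-degree bounds are still satisfied, since $g-d \geq -d+g$ is the boundary case and needs the short exact sequence $K'\mono\Omega_{\Pi^{\rm e}}M\epi\Omega_{\Pi^{\rm e}}\Pi$ to control the correction term, exactly as in Lemma~\ref{lemma.tech_prep} and in the global-dimension proposition); third, transport non-vanishing of $\Ext^g_{\Pi^{\rm e}}(\Pi,\Pi^{\rm e})$ through \eqref{eq1}, \eqref{eq2} to conclude ${\rm H}^g\big(\Pi\otimes^{\L}_\Lambda\R\Hom_\Lambda(D\Lambda,\Lambda)\big) \iso \Pi\otimes_\Lambda\Ext^{g-?}_{\Lambda^{\rm e}}(\Lambda,\Lambda^{\rm e})\neq 0$ with the appropriate index shift; and finally combine with Lemma~\ref{lemma.vosnex1} to get equality.

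The main obstacle I anticipate is the boundary-degree bookkeeping: Lemma~\ref{lemma.tech_prep} is stated only for $i \geq -d+g+1$ (i.e. the target degree $i \geq g+1$), and the complex $\R\Hom_{\Pi^{\rm e}}(\Pi[-d],\Pi^{\rm e})(-1)\otimes^{\L}_\Pi\Lambda$ is only said to be concentrated in degrees $\leq -d+g$, so in degree exactly $-d+g$ (corresponding to $i = g$) there can be extra contributions from the differentials of the resolution $P$ that I have to show do not kill the class. I expect to handle this by using that $\Omega^g_{\Pi^{\rm e}}\Pi$ is Cohen-Macaulay (Assumption~(1)), so $\R\Hom_{\Pi^{\rm e}}(\Omega^g_{\Pi^{\rm e}}\Pi,\Pi^{\rm e})$ is concentrated in a single degree, which pins down ${\rm H}^g\R\Hom_{\Pi^{\rm e}}(\Pi,\Pi^{\rm e}) = \Hom_{\Pi^{\rm e}}(\Omega^g_{\Pi^{\rm e}}\Pi,\Pi^{\rm e})/(\text{image})$ cleanly, and then tracking the $(-)_{>0}$ truncation and $-\otimes^{\L}_\Pi\Lambda$ through \eqref{eq2}, noting these are exact on the relevant term since $P_{\leq 0} = \Pi\otimes_\Lambda Q\otimes\Pi$ with $Q$ a $\Lambda^{\rm e}$-projective resolution of $\Lambda$. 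The remaining subtlety — showing $\pd_{\Lambda^{\rm e}}\Lambda$ is not strictly less than $g$ — should follow because $g$ was characterized intrinsically as the Gorenstein dimension of $\Pi$ and equals $\max\{i \mid \Ext^i_{\Pi^{\rm e}}(\Pi,\Pi^{\rm e})\neq 0\}$, and the degree-zero part functor $(-)_0$ together with $\Omega^g_{\Lambda^{\rm e}}\Lambda\iso(\Omega^g_{\Pi^{\rm e}}\Pi)_0$ transfers this to $\Lambda^{\rm e}$.
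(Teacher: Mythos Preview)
Your overall plan --- establish one inequality via Lemma~\ref{lemma.vosnex1} and then show ${\rm H}^g(\Pi\otimes_\Lambda^{\L}\R\Hom_\Lambda(D\Lambda,\Lambda))\neq 0$ by revisiting the triangle from the proof of Lemma~\ref{lemma.tech_prep} --- is the same as the paper's. But there is a genuine gap in how you carry it out.

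You write that ${\rm H}^g(\Pi\otimes_\Lambda^{\L}\R\Hom_\Lambda(D\Lambda,\Lambda))$ is ``equivalently ${\rm H}^{g-d}(\Pi_{>0}\otimes_\Pi^{\L}\Lambda)$'', i.e.\ you want to push the isomorphism \eqref{eq1} down to $i=g-d$. This cannot work. From the triangle $\Pi_{\leq 0}\otimes_\Pi^{\L}\Lambda\to\Lambda\to\Pi_{>0}\otimes_\Pi^{\L}\Lambda\to$ in the proof of Proposition~\ref{prop.tensor_is_geq1}, together with $\Pi_{\leq 0}\otimes_\Pi^{\L}\Lambda\iso\Pi$, one sees that ${\rm H}^i(\Pi_{>0}\otimes_\Pi^{\L}\Lambda)=0$ for every $i\neq -1$. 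Since $g-d\leq -2$, the left-hand side of the isomorphism you are trying to extend is \emph{zero} at $i=g-d$; so if \eqref{eq1} held there it would prove exactly the opposite of what you want. The ``boundary-degree'' correction term you worry about is not a nuisance to be controlled --- it \emph{is} the quantity you are after.

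The paper exploits the same triangle
\[ (\R\Hom_{\Pi^{\rm e}}(\Pi[-d],\Pi^{\rm e})(-1))_{>0}\otimes_\Pi^{\L}\Lambda \to (\Hom_{\Pi^{\rm e}}(P[-d],\Pi^{\rm e})(-1))_{>0}\otimes_\Pi^{\L}\Lambda \to \Pi_{>0}\otimes_\Pi^{\L}\Lambda \to \]
but reads it the other way: the vanishing of ${\rm H}^i(\Pi_{>0}\otimes_\Pi^{\L}\Lambda)$ for $i<-1$ forces the \emph{first two} terms to have isomorphic ${\rm H}^{g-d}$. The middle one, via \eqref{eq2}, is ${\rm H}^g(\Pi(-1)\otimes_\Lambda^{\L}\R\Hom_\Lambda(D\Lambda,\Lambda))$. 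For the first one: since $\R\Hom_{\Pi^{\rm e}}(\Pi,\Pi^{\rm e})$ is concentrated in homological degrees $\leq g$, taking ${\rm H}^g$ commutes with $-\otimes_\Pi\Lambda$; assumption~(3) says its homology sits in positive grading degrees, so the truncation $(-)_{>0}$ does nothing; and tensoring a nonzero finitely generated $\Pi$-module with $\Lambda$ cannot kill it. This reduces the claim to $\Ext^g_{\Pi^{\rm e}}(\Pi,\Pi^{\rm e})\neq 0$, which you already noted holds by definition of $g$.

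Your detour through $\pd_{\Lambda^{\rm e}}\Lambda$ is therefore unnecessary, and the intended identification ${\rm H}^g(\Pi\otimes_\Lambda^{\L}\R\Hom_\Lambda(D\Lambda,\Lambda))\iso\Pi\otimes_\Lambda\Ext^{?}_{\Lambda^{\rm e}}(\Lambda,\Lambda^{\rm e})$ does not hold in general: already for $d=2$ one has $g=0$ while $\pd_{\Lambda^{\rm e}}\Lambda=\gldim kQ=1$, so there is no single $\Ext$-degree on the $\Lambda$ side that captures ${\rm H}^g$.
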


\begin{proof}
We have the inequality ``$\geq$'' by Lemma~\ref{lemma.vosnex1}. It remains to show that ${\rm H}^g(\Pi \otimes_{\Lambda}^{\L} \R \Hom_{\Lambda}(D \Lambda, \Lambda)) \neq 0$. To do so, we analyse what happens in the proof of Lemma~\ref{lemma.tech_prep} for $i = -d+g$. As in the proof there, we obtain the triangle
\[ (\R\Hom_{\Pi^{\rm e}}(\Pi[-d], \Pi^{\rm e})(-1))_{> 0} \otimes_{\Pi}^{\L} \Lambda \to (\Hom_{\Pi^{\rm e}}(P[-d], \Pi^{\rm e})(-1))_{> 0} \otimes_{\Pi}^{\L} \Lambda \to  \Pi_{> 0} \otimes_{\Pi}^{\L} \Lambda \to \]
where $P$ is the complex formed by the first (d+1) terms of a projective resolution of $\Pi$ as a graded $\Pi$-bimodule.
Since ${\rm H}^i(\Pi_{> 0} \otimes_{\Pi}^{\L} \Lambda) = 0$ for $i \neq -1$ it follows that
\[ {\rm H}^i ((\R\Hom_{\Pi^{\rm e}}(\Pi[-d], \Pi^{\rm e})(-1))_{> 0} \otimes_{\Pi}^{\L} \Lambda) \iso {\rm H}^i ((\Hom_{\Pi^{\rm e}}(P[-d], \Pi^{\rm e})(-1))_{> 0} \otimes_{\Pi}^{\L} \Lambda) \]

whenever $i < -1$.
In particular
\begin{align*}
{\rm H}^g ((\R\Hom_{\Pi^{\rm e}}(\Pi, \Pi^{\rm e})(-1))_{> 0} \otimes_{\Pi}^{\L} \Lambda) & = {\rm H}^{g-d} ((\R\Hom_{\Pi^{\rm e}}(\Pi[-d], \Pi^{\rm e})(-1))_{> 0} \otimes_{\Pi}^{\L} \Lambda) &\\
& \iso {\rm H}^{g-d} ((\Hom_{\Pi^{\rm e}}(P[-d], \Pi^{\rm e})(-1))_{> 0} \otimes_{\Pi}^{\L} \Lambda) \\ & \qquad \qquad \text{(since $g-d\leq -2$)} \\
& = {\rm H}^g ((\Hom_{\Pi^{\rm e}}(P, \Pi^{\rm e})(-1))_{> 0} \otimes_{\Pi}^{\L} \Lambda).&
\end{align*}
As in the proof of Lemma~\ref{lemma.tech_prep} we observe that
\begin{align*}(\Hom_{\Pi^{\rm e}}(P, \Pi^{\rm e})(-1))_{> 0} \otimes_{\Pi}^{\L} \Lambda & \iso (\Hom_{\Pi^{\rm e}}(P_{\leq 0},\Pi^{\rm e})(-1))\otimes^{\L}_{\Pi}\Lambda\\ & \iso \Pi(-1)\otimes_{\Lambda}^{\L}\Hom_{\Lambda^{\rm e}}(Q,\Lambda^{\rm e})\\ & \iso \Pi(-1)\otimes_{\Lambda}^{\L}\R\Hom_{\Lambda}(D\Lambda,\Lambda)
\end{align*}
where $Q$ is a projective resolution of $\Lambda$ as a $\Lambda$-bimodule.
Hence it suffices to show that
\[ {\rm H}^g ((\R\Hom_{\Pi^{\rm e}}(\Pi, \Pi^{\rm e})(-1))_{> 0} \otimes_{\Pi}^{\L} \Lambda) \neq 0. \]
Since the complex $\R \Hom_{\Pi^{\rm e}}(\Pi, \Pi^{\rm e})$ is concentrated in homological degrees at most $g$ we have
\[ {\rm H}^g ((\R\Hom_{\Pi^{\rm e}}(\Pi, \Pi^{\rm e})(-1))_{> 0} \otimes_{\Pi}^{\L} \Lambda) = {\rm H}^g ((\R\Hom_{\Pi^{\rm e}}(\Pi, \Pi^{\rm e})(-1))_{> 0}) \otimes_{\Pi} \Lambda. \]
Tensoring with $\Lambda$ cannot kill a finitely generated $\Pi$-module, so it suffices to show that
\[ {\rm H}^g ((\R\Hom_{\Pi^{\rm e}}(\Pi, \Pi^{\rm e})(-1))_{> 0}) \neq 0. \]
By assumption (3) the homology of the complex of graded $\Pi^{\rm e}$-modules $\R\Hom_{\Pi^{\rm e}}(\Pi, \Pi^{\rm e})(-1)$ is concentrated in positive degrees, and hence we have
\[ (\R\Hom_{\Pi^{\rm e}}(\Pi, \Pi^{\rm e})(-1))_{> 0} = \R\Hom_{\Pi^{\rm e}}(\Pi, \Pi^{\rm e})(-1). \]

Thus it suffices that
\[ \underbrace{{\rm H}^g (\R\Hom_{\Pi^{\rm e}}(\Pi, \Pi^{\rm e})(-1))}_{= \Ext_{\Pi^{\rm e}}^g(\Pi, \Pi^{\rm e})(-1)} \neq 0, \]
which holds by definition of $g$.
\end{proof}

Therefore to prove Theorem~\ref{thm.gdimU} it is sufficient to prove 
\begin{align*}
& \max\{i\leq d-2\mid {\rm H}^i(\Pi \otimes_{\Lambda}^{\L} \R \Hom_{\Lambda}(D \Lambda, \Lambda) )\neq 0 \} \\
& \qquad =d-1+\max\{ i<0\mid \Hom_{{\rm D^b}(\mod \Lambda)}(\mathscr{U},\mathscr{U}[i])\neq 0\}.
\end{align*}

For the proof, we prepare the following two lemmas.

\begin{lemma} \label{lem.prep_vosnex_1}
Let $\Lambda$ be an algebra of global dimension at most $d-1$. Assume for some $j \geq 0$ and some $p\leq -1$ we have
\[ \Ho^i( \mathbb{S}_{d-1}^{-j}(\Lambda)) = 0 \qquad \forall i \in \{p, \ldots, -1\}. \]
Then
\[ \Ho^i(\mathbb{S}_{d-1}^{-(j+1)}(\Lambda)) \iso \Ho^i(\mathbb{S}_{d-1}^{-1}(\tau_{d-1}^{-j} \Lambda)) \quad \forall i \geq p. \]
\end{lemma}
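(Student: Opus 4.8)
The plan is to realise both sides of the asserted isomorphism as $\mathbb{S}_{d-1}^{-1}$ applied to the two terms of a canonical truncation triangle for the complex $C := \mathbb{S}_{d-1}^{-j}(\Lambda)$. Recall that $\mathbb{S}_{d-1}^{-1} = -\overset{\L}{\otimes}_\Lambda\R\Hom_\Lambda(D\Lambda,\Lambda)[d-1]$. The first step is a boundedness observation: since $\gldim\Lambda\leq d-1$, the complex $\R\Hom_\Lambda(D\Lambda,\Lambda)$ is concentrated in cohomological degrees $0,\dots,d-1$, so for a complex with homology in degrees $\leq m$ the complex $-\overset{\L}{\otimes}_\Lambda\R\Hom_\Lambda(D\Lambda,\Lambda)$ has homology in degrees $\leq m+d-1$; shifting by $[d-1]$, the functor $\mathbb{S}_{d-1}^{-1}$ carries complexes with homology in degrees $\leq n$ to complexes with homology in degrees $\leq n$, for every $n$. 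Starting from $\Lambda$ and iterating, $C=\mathbb{S}_{d-1}^{-j}(\Lambda)$ has homology concentrated in cohomological degrees $\leq 0$, with $\Ho^0(C)=\tau_{d-1}^{-j}\Lambda$.

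Now combine this with the hypothesis: $\Ho^i(C)=0$ for $i\geq 1$ and for $p\leq i\leq -1$, so $C$ has homology only in degree $0$ and in degrees $\leq p-1$. Consequently $\tau_{\geq p}C$ has homology concentrated in degree $0$ and is therefore isomorphic in ${\rm D^b}(\mod\Lambda)$ to $\Ho^0(C)=\tau_{d-1}^{-j}\Lambda$, and the truncation triangle reads
\[ \tau_{\leq p-1}C \to C \to \tau_{d-1}^{-j}\Lambda \to \tau_{\leq p-1}C[1]. \]
Applying the triangle functor $\mathbb{S}_{d-1}^{-1}$, and using $\mathbb{S}_{d-1}^{-1}(C)=\mathbb{S}_{d-1}^{-(j+1)}(\Lambda)$, gives a triangle
\[ \mathbb{S}_{d-1}^{-1}(\tau_{\leq p-1}C) \to \mathbb{S}_{d-1}^{-(j+1)}(\Lambda) \to \mathbb{S}_{d-1}^{-1}(\tau_{d-1}^{-j}\Lambda) \to \mathbb{S}_{d-1}^{-1}(\tau_{\leq p-1}C)[1]. \]
By the boundedness observation applied to $\tau_{\leq p-1}C$, the complex $\mathbb{S}_{d-1}^{-1}(\tau_{\leq p-1}C)$ has homology concentrated in degrees $\leq p-1$; hence for every $i\geq p$ both $\Ho^i$ and $\Ho^{i+1}$ of this complex vanish, and the long exact homology sequence of the triangle yields
\[ \Ho^i(\mathbb{S}_{d-1}^{-(j+1)}(\Lambda)) \iso \Ho^i(\mathbb{S}_{d-1}^{-1}(\tau_{d-1}^{-j}\Lambda)) \qquad \forall\, i\geq p, \]
which is exactly the claim.

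The only point demanding a little care is the right $t$-exactness of $\mathbb{S}_{d-1}^{-1}$ for the standard $t$-structure — equivalently, the vanishing of $\Ho^{>0}(\mathbb{S}_{d-1}^{-j}\Lambda)$ — which is precisely where the finiteness of $\gldim\Lambda$ is used; everything after that is a formal manipulation of the truncation triangle and its long exact homology sequence, so I do not anticipate any genuine obstacle.
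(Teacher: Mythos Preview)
Your proof is correct and essentially identical to the paper's own argument: both use that $\mathbb{S}_{d-1}^{-1}$ preserves $\mathrm{D}^{\leq n}$ (by finiteness of $\gldim\Lambda$), apply it to the canonical truncation triangle of $\mathbb{S}_{d-1}^{-j}(\Lambda)$, and read off the isomorphism from the resulting long exact sequence. The only cosmetic difference is that the paper truncates at $0$ (using ${\rm trunc}^{<0}$ and $\Ho^0$) while you truncate at $p$; the vanishing hypothesis makes these two truncations coincide.
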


\begin{proof}
Since $\Lambda$ has global dimension $\leq d-1$ an easy induction shows that the functor $\mathbb{S}_{d-1}^{-j}$ preserves the left aisle ${\rm D^b}(\mod \Lambda)^{\leq 0}$ of the canonical $t$-structure of ${\rm D^b}(\mod \Lambda)$. Therefore $\mathbb{S}_{d-1}^{-j}\Lambda$ is in negative degrees and we can consider the triangle
\[ {\rm trunc}^{< 0}(\mathbb{S}_{d-1}^{- j}(\Lambda)) \to \mathbb{S}_{d-1}^{- j}(\Lambda) \to \underbrace{\Ho^0(\mathbb{S}_{d-1}^{- j}(\Lambda))}_{= \tau_{d-1}^{-j} \Lambda} \to, \]
where ${\rm trunc}^{<0} X$ is the usual truncation of the complex $X$. Applying $\mathbb{S}_{d-1}^{-1}$ to it we obtain the triangle
\[ \mathbb{S}_{d-1}^{-1}({\rm trunc}^{< 0}(\mathbb{S}_{d-1}^{- j}(\Lambda))) \to \mathbb{S}_{d-1}^{-(j+1)}(\Lambda) \to \mathbb{S}_{d-1}^{-1}(\tau_{d-1}^{-j} \Lambda) \to. \]
By assumption we have that ${\rm trunc}^{< 0}(\mathbb{S}_{d-1}^{- j}(\Lambda))$ is concentrated in degrees $\leq p-1$. Hence $\mathbb{S}_{d-1}^{-1}({\rm trunc}^{< 0}(\mathbb{S}_{d-1}^{- j}(\Lambda)))$ is also concentrated in degrees $\leq p-1$. Thus the triangle gives the desired isomorphism of homologies.
\end{proof}

\begin{lemma} \label{lem.prep_vosnex_2}
Let $\Lambda$ be an algebra of global dimension at most $d-1$. The following are equivalent:
\begin{enumerate}
\item $\forall i \in \{p, \ldots, -1\} \; \forall j \geq 0 \colon \Ho^i( \mathbb{S}_{d-1}^{-j}(\Lambda)) = 0;$
\item $\forall i \in \{p, \ldots, -1\} \; \forall j \geq 0 \colon \Ho^i( \mathbb{S}_{d-1}^{-1}(\tau_{d-1}^{- j} \Lambda)) = 0.$
\end{enumerate}
\end{lemma}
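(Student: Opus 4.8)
The plan is to prove the equivalence by a downward induction on $j$-blocks, using Lemma~\ref{lem.prep_vosnex_1} as the inductive engine. The implication (1) $\then$ (2) is immediate, since for each fixed $j$, taking $p' = p$ in the hypothesis of Lemma~\ref{lem.prep_vosnex_1} (which is exactly (1) applied to that $j$) yields $\Ho^i(\mathbb{S}_{d-1}^{-(j+1)}(\Lambda)) \iso \Ho^i(\mathbb{S}_{d-1}^{-1}(\tau_{d-1}^{-j}\Lambda))$ for all $i \geq p$; combined with (1) for $j+1$ this forces $\Ho^i(\mathbb{S}_{d-1}^{-1}(\tau_{d-1}^{-j}\Lambda)) = 0$ for $i \in \{p,\ldots,-1\}$, and the $j=0$ case is handled separately (or one notes $\mathbb{S}_{d-1}^{-1}(\tau_{d-1}^{0}\Lambda) = \mathbb{S}_{d-1}^{-1}(\Lambda)$, whose relevant homologies vanish by (1) with $j=1$ via the same lemma, or directly).

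For the converse (2) $\then$ (1), I would argue by induction on $j$. The base case $j = 0$ is trivial since $\mathbb{S}_{d-1}^0(\Lambda) = \Lambda$ is concentrated in degree $0$, so $\Ho^i(\Lambda) = 0$ for all $i \in \{p,\ldots,-1\}$ (as $p \leq -1$). For the inductive step, assume $\Ho^i(\mathbb{S}_{d-1}^{-j'}(\Lambda)) = 0$ for all $i \in \{p,\ldots,-1\}$ and all $j' \leq j$. In particular the hypothesis of Lemma~\ref{lem.prep_vosnex_1} holds for this $j$, so we get $\Ho^i(\mathbb{S}_{d-1}^{-(j+1)}(\Lambda)) \iso \Ho^i(\mathbb{S}_{d-1}^{-1}(\tau_{d-1}^{-j}\Lambda))$ for all $i \geq p$. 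Restricting to $i \in \{p,\ldots,-1\}$, the right-hand side vanishes by hypothesis (2) (for this particular $j$), hence so does the left-hand side, completing the induction.

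The one point requiring a little care — and the place I would be most careful about — is making sure the quantifier ranges match up when invoking Lemma~\ref{lem.prep_vosnex_1}: that lemma produces an isomorphism on homologies in \emph{all} degrees $i \geq p$, not merely in the window $\{p,\ldots,-1\}$, and its hypothesis is precisely the vanishing in the window $\{p,\ldots,-1\}$ for a \emph{single} value of $j$. So the induction is clean: at each stage I only need the vanishing already established for the current index $j$ to unlock the comparison isomorphism, and then read off vanishing for $j+1$. There is no genuine obstacle here; the proof is essentially bookkeeping on top of Lemma~\ref{lem.prep_vosnex_1}, with the truncation-triangle argument (the real content) already packaged in that lemma. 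I would write it up in two or three lines in each direction.
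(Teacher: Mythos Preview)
Your proof is correct and follows essentially the same approach as the paper: both directions are deduced from Lemma~\ref{lem.prep_vosnex_1}, with (2) $\then$ (1) done by induction on $j$ (the base case $j=0$ being trivial since $\Lambda$ is concentrated in degree $0$). The paper's proof is terser but identical in content.
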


\begin{proof}
(1) \then{} (2) follows immediately from Lemma~\ref{lem.prep_vosnex_1}.

(2) \then{} (1) follows from Lemma~\ref{lem.prep_vosnex_1} by induction on $j$.
\end{proof}

Now we are ready to prove Theorem~\ref{thm.gdimU}.

\begin{proof}[Proof of Theorem~\ref{thm.gdimU}]
We first note that $\R\Hom_{\Lambda}(D \Lambda, \Lambda) \overset{\L}{\otimes}_{\Lambda} -$ is the inverse Serre functor on ${\rm D^b}(\mod \Lambda)$, and that $\Pi$ is isomorphic to $\bigoplus_{j\geq 0}\tau_{d-1}^{-j}\Lambda$ as a $\Lambda$-module. Hence we get the following equivalences for $\ell\leq d-2$:
\begin{align*}
&  \Ho^i( \R\Hom_{\Lambda}(D \Lambda, \Lambda) \overset{\L}{\otimes}_{\Lambda} \Pi) = 0 \qquad \forall i \in \{\ell, \ldots, d-2\} \\
\iff{} & \Ho^i( \mathbb{S}^{-1}(\Pi)) = 0 \qquad \forall i \in \{\ell, \ldots, d-2\} \\
\iff{} & \Ho^i( \mathbb{S}^{-1}(\tau_{d-1}^{- j} \Lambda)) = 0 \qquad \forall i \in \{\ell, \ldots, d-2\} \; \forall j \geq 0 \\
\iff{} & \Ho^i( \mathbb{S}_{d-1}^{-1}(\tau_{d-1}^{- j} \Lambda)) = 0 \qquad \forall i \in \{-d+1+\ell, \ldots, -1\} \; \forall j \geq 0 \\
\iff{} & \Ho^i( \mathbb{S}_{d-1}^{-(j+1)}( \Lambda)) = 0 \qquad \forall i \in \{-d+1+\ell, \ldots, -1\} \; \forall j \geq 0,
\end{align*}
where the last equivalence is Lemma~\ref{lem.prep_vosnex_2} for $p=-d+1+\ell$.

We may drop the restriction to non-negative $j$, since $\mathbb{S}_{d-1}^{-j}(\Lambda)$ is concentrated in positive degrees for negative $j$. So we have
\begin{align*}
&  \Ho^i( \R\Hom_{\Lambda}(D \Lambda, \Lambda) \overset{\L}{\otimes}_{\Lambda} \Pi) = 0 \qquad \forall i \in \{\ell, \ldots, d-2\} \\
\iff{} & \Ho^i( \mathbb{S}_{d-1}^{-j}(\Lambda)) = 0 \qquad \forall i \in \{-d+1+\ell, \ldots, -1\} \; \forall j \\
\iff{} & \Hom_{{\rm D^b}(\mod \Lambda)}(\Lambda, \mathscr{U}[i]) = 0 \qquad \forall i \in \{ -d+1+\ell, \ldots, -1\}.
\end{align*}
Therefore we get \[\max\{i\leq d-2\mid {\rm H}^i(\Pi \otimes_{\Lambda}^{\L} \R \Hom_{\Lambda}(D \Lambda, \Lambda) )\neq 0 \}=d-1+\max\{ i<0\mid \Hom_{{\rm D^b}(\mod \Lambda)}(\mathscr{U},\mathscr{U}[i])\neq 0\}\] which finishes the proof of Theorem~\ref{thm.gdimU}.

\end{proof}

\subsection{The selfinjective case}

The situation of Theorem~\ref{thm.21} is especially nice when the Gorenstein dimension $g$ of the algebra $\Pi$ is $0$, that is when $\Pi$ is selfinjective.  In that case we prove that the algebra $\Pi$ is the preprojective algebra of a $(d-1)$-representation finite algebra.

\begin{definition}[{\cite[Def 2.2]{IO11}, \cite[Thm 3.1]{IO13}}]
A $\tau_{d-1}$-algebra $\Lambda$ is said to be \emph{$(d-1)$-representation finite} if the subcategory $\mathscr{U}\subset {\rm D^b}(\mod \Lambda)$ is stable under the Serre functor, that is $\mathbb{S}\mathscr{U}=\mathscr{U}$. 
\end{definition}

\begin{theorem}\label{thm.selfinj}
Let $\Pi$ be a finite dimensional selfinjective positively graded algebra which is bimodule stably $(1)$-twisted $d$-Calabi-Yau. Then $\Lambda=\Pi_0$ is a $(d-1)$-representation finite algebra and there is an isomorphism of graded algebra $\Pi\iso\Pi_d(\Lambda)$.
\end{theorem}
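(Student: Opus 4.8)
The plan is to deduce Theorem~\ref{thm.selfinj} from Theorem~\ref{thm.21} by verifying that a finite dimensional selfinjective positively graded algebra $\Pi$ which is bimodule stably $(1)$-twisted $d$-Calabi-Yau satisfies all three hypotheses of Theorem~\ref{thm.21}, and then showing in addition that $\Lambda = \Pi_0$ is not merely $\tau_{d-1}$-finite but actually $(d-1)$-representation finite. Hypothesis~(1) is immediate: a selfinjective algebra is Gorenstein of dimension $g = 0 \leq d-2$ (using $d \geq 2$). Hypothesis~(2) is the standing assumption. So the first real task is hypothesis~(3), namely $\Ext^j_{\gr\Pi^{\rm e}}(\Pi, \Pi^{\rm e}(i)) = 0$ for all $i < 0$ and $j > 0$.

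For hypothesis~(3) I would argue as follows. Since $\Pi$ is selfinjective, $\Pi^{\rm e}$ is also selfinjective, so $\R\Hom_{\Pi^{\rm e}}(\Pi,\Pi^{\rm e})$ is concentrated in homological degree $0$ and equals $\Hom_{\Pi^{\rm e}}(\Pi,\Pi^{\rm e})$; moreover by hypothesis~(2) (now in the ungraded sense, up to projectives) this is $\cong \Pi(-1)$ in $\stgrCM(\Pi^{\rm e}) = \stgr\Pi^{\rm e}$. More precisely, the bimodule stably $(1)$-twisted $d$-Calabi-Yau property for the selfinjective $\Pi$ says $\Omega^{d+1}_{\Pi^{\rm e}}\Pi \cong \Pi(1)$ in $\stgr\Pi^{\rm e}$, equivalently $\Omega^{-(d+1)}_{\Pi^{\rm e}}\Pi(-1) \cong \Pi$. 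Since $\Pi$ is concentrated in non-negative degrees and generated in degree $0$, the complex $\R\Hom_{\gr\Pi^{\rm e}}(\Pi, \Pi^{\rm e}(i))$ for $i<0$ computes, via this $\Omega$-periodicity, shifted self-extensions of $\Pi$ twisted into strictly negative degrees. Because the minimal projective bimodule resolution of $\Pi$ has all terms generated in non-negative degrees (this is Remark~\ref{rem.CM_repl_proj} together with positivity of $\Pi$) while $\Pi^{\rm e}(i)$ with $i<0$ is concentrated in strictly negative degrees in the relevant range, the relevant $\Hom$-spaces vanish. Here I expect to need the graded refinement of the Calabi-Yau isomorphism, i.e.\ that the periodicity $\Omega^{d+1}_{\Pi^{\rm e}}\Pi \cong \Pi(1)$ is an isomorphism of \emph{graded} bimodules up to graded-projective summands, so that one can track the degree shift carefully; this degree bookkeeping is the main obstacle in the argument.

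Granting (1), (2), (3), Theorem~\ref{thm.21} gives that $\Lambda = \Pi_0$ is $\tau_{d-1}$-finite of global dimension $\leq d-1$ and $\Pi \cong \Pi_d(\Lambda)$ as graded algebras. It remains to upgrade $\tau_{d-1}$-finite to $(d-1)$-representation finite, i.e.\ to show $\mathbb{S}\mathscr{U} = \mathscr{U}$, equivalently $\mathbb{S}_{d-1}\mathscr{U} = \mathscr{U}$ with $\mathscr{U} = \add\{\mathbb{S}_{d-1}^i\Lambda \mid i \in \mathbb{Z}\}$. For this I would invoke Theorem~\ref{thm.gdimU}: since $\Pi$ is selfinjective we have $g = 0$, so the theorem forces
\[
\max\{ i < 0 \mid \Hom_{{\rm D^b}(\mod\Lambda)}(\mathscr{U}, \mathscr{U}[i]) \neq 0 \} = g - (d-1) = -(d-1) = 1-d.
\]
Combined with the $(d-1)$-cluster tilting property of $\mathscr{U}$ (which kills $\Hom(\mathscr{U},\mathscr{U}[i])$ for $i = 1,\dots,d-2$) and with $\Hom$-vanishing for $i>0$ beyond a point, this pins the only possible negative self-extensions of $\mathscr{U}$ to degree $1-d$. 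By definition $\mathbb{S} = \mathbb{S}_{d-1}[d-1]$, so $\Hom(\mathscr{U}, \mathbb{S}\mathscr{U}[1-d]) = \Hom(\mathscr{U}, \mathbb{S}_{d-1}\mathscr{U}) $ being the "top" nonvanishing self-extension, together with the Serre duality $\Hom(X, \mathbb{S} Y) \cong D\Hom(Y, X)$, shows $\mathbb{S}_{d-1}\mathscr{U} \subseteq \mathscr{U}$ lands in the right homological position; concretely, for $U \in \mathscr{U}$ the object $\mathbb{S}_{d-1}^{-1}U$ has its homology concentrated in degree $0$ (no homology in degrees $1-d,\dots,-1$, by the vanishing statement applied to $\Pi \cong \bigoplus_j \tau_{d-1}^{-j}\Lambda$), hence $\mathbb{S}_{d-1}^{-1}U \in \mod\Lambda$ and in fact $\mathbb{S}_{d-1}^{-1}U \cong \tau_{d-1}^{-1}U \in \mathscr{U}$. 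Thus $\mathbb{S}_{d-1}^{-1}\mathscr{U} \subseteq \mathscr{U}$, and since $\mathbb{S}_{d-1}$ is an autoequivalence permuting the (finitely many) indecomposables of $\mathscr{U}$ it follows that $\mathbb{S}_{d-1}\mathscr{U} = \mathscr{U}$, i.e.\ $\Lambda$ is $(d-1)$-representation finite. This last inclusion-is-equality step is routine given finiteness; the conceptual content is entirely in reading off $g=0$ through Theorem~\ref{thm.gdimU}.
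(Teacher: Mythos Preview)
Your overall strategy matches the paper's: verify hypotheses~(1)--(3) of Theorem~\ref{thm.21}, apply it, then upgrade $\tau_{d-1}$-finite to $(d-1)$-representation finite. Hypothesis~(3) is fine --- indeed, your very first sentence already proves it: since $\Pi^{\rm e}$ is selfinjective, $\Pi^{\rm e}$ is injective over itself and $\Ext^j_{\gr\Pi^{\rm e}}(\Pi,\Pi^{\rm e}(i)) = 0$ for all $j>0$ and all $i$. The subsequent degree bookkeeping via $\Omega$-periodicity is unnecessary; the paper simply says condition~(3) ``clearly'' holds.

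The genuine gap is in your argument for $(d-1)$-representation finiteness. First, the claimed equivalence ``$\mathbb{S}\mathscr{U} = \mathscr{U}$, equivalently $\mathbb{S}_{d-1}\mathscr{U} = \mathscr{U}$'' is wrong: since $\mathscr{U} = \add\{\mathbb{S}_{d-1}^i\Lambda \mid i \in \mathbb{Z}\}$ by definition, the equality $\mathbb{S}_{d-1}\mathscr{U} = \mathscr{U}$ is a tautology. What you actually need is $\mathscr{U}[d-1] = \mathscr{U}$. Second, your claim that $\mathbb{S}_{d-1}^{-1}U$ has homology concentrated in degree $0$ is not justified: Theorem~\ref{thm.gdimU} with $g=0$ only gives vanishing of $\Hom_{{\rm D^b}(\mod\Lambda)}(\mathscr{U},\mathscr{U}[i])$ for $i \in \{-(d-2),\ldots,-1\}$, which for a module $U$ rules out homology of $\mathbb{S}_{d-1}^{-1}U$ in degrees $-(d-2),\ldots,-1$ but says nothing about degree $-(d-1)$. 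Indeed $H^{-(d-1)}(\mathbb{S}_{d-1}^{-1}U) = \Hom_\Lambda(D\Lambda,U)$, which is typically nonzero. So the conclusion $\mathbb{S}_{d-1}^{-1}U \in \mod\Lambda$ fails.

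The paper avoids this by a different route: from $g = 0 \leq 1$ and the corollary to Theorem~\ref{thm.gdimU} one gets that $\Lambda$ satisfies the vosnex property, and then \cite[Corollary~3.7]{IO13} is invoked to conclude that $\Lambda$ is $(d-1)$-representation finite. You should either follow that citation or, if you want a self-contained argument, prove directly that selfinjectivity of $\Pi_d(\Lambda)$ forces $\mathscr{U}[d-1] = \mathscr{U}$; but that is a separate argument, not the one you sketched.
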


In order to prove this result, we introduce a technical definition.

\begin{definition}\cite{IO13}
Let $\Lambda$ be a $\tau_{d-1}$-finite algebra. We say that $\Lambda$ has the \textbf{v}anishing \textbf{o}f \textbf{s}mall \textbf{n}egative \textbf{ex}tensions property (\textbf{vosnex} for short) if
\[ \Hom_{{\rm D^b}(\mod \Lambda)}( \mathscr{U}, \mathscr{U}[i]) = 0 \; \forall i \in \{ -(d-3), \ldots, -1 \} . \]

\end{definition}

 From Theorem~\ref{thm.gdimU}, we immediately deduce the following result, giving an equivalent but more transparent characterization of what it means for an algebra to satisfy the vosnex property.

\begin{corollary}
In the setup of Theorem~\ref{thm.21} the following are equivalent:
\begin{itemize}
\item[(a)] the graded algebra $\Pi$ has Gorenstein dimension $\leq 1$
\item[(b)] the algebra $\Lambda=\Pi_0$ has the vosnex property. 
\end{itemize}
\end{corollary}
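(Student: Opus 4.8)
The plan is to deduce this corollary directly from Theorem~\ref{thm.gdimU}, which expresses the Gorenstein dimension $g$ of $\Pi$ as
\[ g = d - 1 + \max \{ i < 0 \mid \Hom_{{\rm D^b}(\mod \Lambda)}(\mathscr{U}, \mathscr{U}[i]) \neq 0 \}. \]
First I would observe that the maximum on the right is taken over a nonempty set: since $\Pi$ is Gorenstein of dimension $g$ with $0 \leq g \leq d-2$ by assumption~(1), the formula forces $-(d-1) \leq \max\{ i<0 \mid \Hom_{{\rm D^b}(\mod \Lambda)}(\mathscr{U}, \mathscr{U}[i]) \neq 0\} \leq -1$, so in particular some such $i$ exists (and the formula is not vacuous).

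Next I would simply rewrite the inequality $g \leq 1$. By the formula above, $g \leq 1$ is equivalent to
\[ \max \{ i < 0 \mid \Hom_{{\rm D^b}(\mod \Lambda)}(\mathscr{U}, \mathscr{U}[i]) \neq 0 \} \leq 2 - d, \]
which in turn says precisely that $\Hom_{{\rm D^b}(\mod \Lambda)}(\mathscr{U}, \mathscr{U}[i]) = 0$ for every $i$ with $2-d < i < 0$, i.e.\ for every $i \in \{ -(d-3), \ldots, -1 \}$. This is exactly the vosnex property for $\Lambda = \Pi_0$ by definition, giving (a)~$\iff$~(b).

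I would close by noting the degenerate range: for $d = 2$ and $d = 3$ the index set $\{ -(d-3), \ldots, -1 \}$ is empty, so the vosnex property holds automatically — consistent with the fact that in those cases $g \leq d-2 \leq 1$ is automatic. There is no real obstacle here; the only point requiring a sentence of care is the nonemptiness of the set over which the maximum in Theorem~\ref{thm.gdimU} is taken, which is handled by the bound $0 \leq g \leq d-2$.
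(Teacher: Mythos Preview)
Your proof is correct and takes the same approach as the paper, which simply states that the corollary is immediately deduced from Theorem~\ref{thm.gdimU}. Your added remark about the nonemptiness of the set over which the maximum is taken is a reasonable point of care, and your handling of the degenerate cases $d=2,3$ is accurate.
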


Using this corollary together with Theorem~\ref{thm.21} and some results in \cite{IO13}, we achieve the proof of Theorem~\ref{thm.selfinj}.

\begin{proof}[Proof of Theorem~\ref{thm.selfinj}] First note that an algebra is selfinjective if and only if it is Gorenstein of dimension $0$. Moreover if $\Pi$ is selfinjective, then it clearly satisfies hypothesis (3) of Theorem \ref{thm.21}. Hence $\Pi$ is the $d$-preprojective algebra of its degree zero part $\Lambda$. Moreover, by \cite[Corollary 3.7]{IO13}, the vosnex property implies that $\Lambda$ is a $(d-1)$-representation finite algebra.
\end{proof}

\section{Bimodule Calabi-Yau properties of preprojective algebras} \label{section4}

Throughout this section $k$ is assumed to be an algebraically closed field.

\medskip

By a classical result due to Ringel \cite{Rin}, if $Q$ is an acyclic quiver, the $2$-preprojective algebra $\Pi$ of the hereditary algebra $kQ$ is the usual preprojective algebra of $Q$. If $Q$ is Dynkin, then it is well-known that $\Pi$ is selfinjective, finite-dimensional and that $\stmod \Pi$ is $2$-Calabi-Yau. 

On the other hand, if $\Lambda$ is a $\tau_2$-finite algebra of global dimension $2$, its preprojective algebra $\Pi=\Pi_3(\Lambda)$ is the endomorphism algebra of a cluster-tilting object in a $2$-Calabi-Yau category \cite[Theorem~4.10]{Ami09}. Hence by \cite[Theorem~3.3]{KR07}, the algebra $\Pi$ is Gorenstein and the stable category $\stCM \Pi$ is $3$-Calabi-Yau. 

More generally, if $\Lambda$ is an $\tau_{d-1}$-finite algebra of global dimension $d-1$,  its preprojective algebra $\Pi=\Pi_d(\Lambda)$ is the endomorphsim algebra of a $(d-1)$-cluster-tilting object in a $(d-1)$-Calabi-Yau category by \cite[Theorem~4.9]{Guo11}. If moreover $\Lambda$ is $(d-1)$-representation finite, the stable category $\stmod\Pi$ is $d$-Calabi-Yau by \cite[Corollary~4.6]{IO13} (see also \cite[Proposition 3.3]{Dugas}). 

In this section, we prove that these Calabi-Yau properties can be deduced from bimodule properties of the preprojective algebra. More precisely, we prove that in the above cases, the preprojective algebra satisfies the properties $(1)$, $(2)$ and $(3)$ of Theorem~\ref{thm.21}.

\subsection{Classical preprojective algebras} \label{section.d=2}

Let $Q$ be an acyclic quiver. Then the $2$-preprojective algebra $\Pi_2(kQ)=\Ten_{kQ}\Ext^1_{kQ}(DkQ,kQ)$ is given by the double quiver $\bar{Q}$, obtained from $Q$ by adding for any $a \colon i\to j$ an arrow $\bar{a}:j\to i$, with the preprojective relations: $\sum_{a\in Q_1}a\bar{a}-\bar{a}a$.
The functor $\tau_1$ is isomorphic to the Auslander-Reiten translation of ${\rm D^b}(\mod kQ)$. Thus $kQ$ is $\tau_1$-finite if and only if the quiver $Q$ is of Dynkin type.

Using this description, we prove the converse of Theorem~\ref{thm.21} for the case $d=2$.

\begin{theorem}\label{thm.d=2}
Let $\Lambda$ be a basic $\tau_1$-finite algebra of global dimension $\leq 1$. Then the $2$-preprojective algebra $\Pi:=\Pi_2(\Lambda)$ satisfies the following properties:
\begin{enumerate}
\item $\Pi$ is selfinjective (=Gorenstein of dimension 0);
\item $\Pi$ is bimodule stably $(1)$-twisted $2$-Calabi-Yau.
\end{enumerate}
In particular, $\stmod \Pi$ is a $2$-Calabi-Yau category.
\end{theorem}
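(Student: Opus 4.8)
The plan is to verify the three properties of Theorem~\ref{thm.21} directly for the preprojective algebra $\Pi = \Pi_2(\Lambda)$, using that $\Lambda$ is hereditary and $\tau_1$-finite. First I would recall that, since $\gldim \Lambda \leq 1$ and $\Lambda$ is $\tau_1$-finite, $\Pi$ is finite dimensional; the classical fact that preprojective algebras of Dynkin type are selfinjective (more generally, that $\Pi_2(\Lambda)$ is selfinjective for $\tau_1$-finite hereditary $\Lambda$) gives property~(1). Selfinjectivity immediately gives that Gorenstein dimension is $0 \leq d-2 = 0$, and it also makes property~(3) automatic, exactly as recorded in the proof of Theorem~\ref{thm.selfinj}. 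So the entire content is property~(2): the bimodule stably $(1)$-twisted $2$-Calabi-Yau isomorphism
\[ \R\Hom_{\Pi^{\rm e}}(\Pi, \Pi^{\rm e})[3] \iso \Pi(1) \quad \text{in } \stgrCM(\Pi^{\rm e}). \]

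To establish this, I would use the well-known explicit projective bimodule resolution of a preprojective algebra. Since $\Pi = \Pi_2(\Lambda)$ is given by the double quiver $\bar Q$ with the preprojective relations, there is a standard $2$-periodic-up-to-twist complex of projective $\Pi^{\rm e}$-modules. Concretely one has an exact sequence
\[ 0 \to \Pi \otimes_\Lambda \Pi (-1) \to \Pi \otimes_{\Lambda_0} \Pi^{\bar Q_1} \to \Pi \otimes_{\Lambda_0} \Pi \to \Pi \to 0 \]
(suitably graded, with the middle term reflecting the arrows of $\bar Q$ and the vertex idempotents), which expresses that $\Pi^{\rm e}$ has "projective dimension" behaving like that of a preprojective algebra and yields $\Omega^3_{\Pi^{\rm e}} \Pi \iso \Pi(1)$. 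Applying $\Hom_{\Pi^{\rm e}}(-, \Pi^{\rm e})$ to the truncation of this resolution, and tracking the grading shift carefully — here is where the twist by $(1)$ enters, coming from the single degree-$1$ generator $\Ext^1_\Lambda(D\Lambda, \Lambda)$ of the tensor algebra — gives the claimed isomorphism in $\stgrCM(\Pi^{\rm e})$. Alternatively, and perhaps more cleanly, I would invoke that $\Pi$ is the $2$-preprojective algebra and cite the known computation of $\R\Hom_{\Pi^{\rm e}}(\Pi,\Pi^{\rm e})$ for preprojective algebras of hereditary algebras (due to the $2$-periodicity / Calabi-Yau results of Erdmann--Snashin, or derivable from the presentation of $\Pi$ as a $1$-Calabi-Yau-completion of $kQ$ in the sense of Keller), specializing to the graded setting.

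Once property~(2) is in place, the "in particular" clause is immediate: by Theorem~\ref{bimodule stably implies stably}, $\Pi$ being bimodule stably $(1)$-twisted $2$-Calabi-Yau implies it is stably $(1)$-twisted $2$-Calabi-Yau, and since $\Pi$ is selfinjective we have $\stgrCM(\Pi) = \stgr \Pi$ and forgetting the grading (as noted after the proof of Theorem~\ref{bimodule stably implies stably}) yields that $\stmod \Pi$ is $2$-Calabi-Yau.

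The main obstacle is the careful bookkeeping of the grading in the bimodule resolution: one must be sure that the "extra" syzygy that makes $\Omega^3_{\Pi^{\rm e}} \Pi$ land back on $\Pi$ carries exactly the twist $(1)$ and no other shift, and that the $\Hom_{\Pi^{\rm e}}(-, \Pi^{\rm e})$ applied to the resolution is computed correctly modulo graded projectives. I expect the degree-$0$ and degree-$1$ pieces of the resolution to require the most attention, since the self-duality of the arrow space of $\bar Q$ (each arrow $a$ paired with its reverse $\bar a$) is what produces the symmetry, and one must check it interacts correctly with the grading in which the new arrows $\bar a$ sit in degree~$1$ while the original arrows of $Q$ sit in degree~$0$.
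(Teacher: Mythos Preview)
Your overall strategy matches the paper's: cite selfinjectivity, note that (3) is automatic, and prove (2) via the explicit bimodule resolution of $\Pi$ together with its self-duality under $\Hom_{\Pi^{\rm e}}(-,\Pi^{\rm e})$. However, there is a genuine error in the details.

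The claim that the resolution ``yields $\Omega^3_{\Pi^{\rm e}}\Pi \iso \Pi(1)$'' is false in general. For most Dynkin types (e.g.\ $A_n$ with $n\geq 2$, $D_{2n+1}$, $E_6$) the Nakayama automorphism of $\Pi$ is nontrivial, so $\Omega^3_{\Pi^{\rm e}}\Pi$ is a twisted bimodule ${}_1\Pi_\nu$, not $\Pi$. Correspondingly, your four-term sequence cannot be exact as written: the leftmost term $\Pi\otimes_\Lambda\Pi(-1)$ is not a projective $\Pi^{\rm e}$-module (you are tensoring over $\Lambda=kQ$, not over $kQ_0$), so it does not compute a syzygy. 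What the paper actually does is take the three-term truncation
\[ P_\bullet = \Big[\bigoplus_i \Pi e_i\otimes e_i\Pi(-1) \xrightarrow{d_2} \bigoplus_{a\in \bar Q_1} \Pi e_{\tail(a)}\otimes e_{\start(a)}\Pi(-\deg a) \xrightarrow{d_1} \bigoplus_i \Pi e_i\otimes e_i\Pi\Big] \]
and check directly that $\Hom_{\Pi^{\rm e}}(d_2,\Pi^{\rm e}) = d_1(1)$, so that $\Hom_{\Pi^{\rm e}}(P_\bullet,\Pi^{\rm e})[2]\iso P_\bullet(1)$ as complexes. Taking $H^{-2}$ on both sides gives
\[ \Hom_{\Pi^{\rm e}}(\Pi,\Pi^{\rm e}) \iso \Omega^3_{\Pi^{\rm e}}\Pi\,(1), \]
which is exactly the reformulation of (2) in Observation~\ref{obs.eq_CY}. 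The key point you correctly anticipate---the pairing $a\leftrightarrow\bar a$ producing the self-duality, with the degree bookkeeping---is precisely the verification that $d_2^\vee = d_1(1)$. So keep your plan, but replace the incorrect intermediate claim $\Omega^3\Pi\iso\Pi(1)$ by the self-duality of the complex $P_\bullet$; the conclusion then follows in one step rather than two.
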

Note that the selfinjectivity of $\Pi$ immediately implies that $\Ext_{\gr\Pi^{\rm e}}^j(\Pi, \Pi^{\rm e}(i))$ vanishes for all $i$ and all $j>0$, so condition (3) of Theorem~\ref{thm.21} is automatically satisfied.
\begin{proof}

$(1)$ is well known (see \cite{ES98b}).

The beginning of the minimal projective resolution of $\Pi$ as a graded $\Pi$-bimodule is of the following form:
\[P_\bullet:=\bigoplus_{i\in Q_0}\Pi e_i\otimes e_i\Pi(-1)\to^{d_2} \bigoplus_{a\in Q_1}(\Pi e_{\tail(\bar{a})}\otimes e_{\start(\bar{a})}\Pi(-1)\oplus  \Pi e_{\tail(a)}\otimes e_{\start(a)}\Pi)\to^{d_1}\bigoplus_{i\in Q_0}\Pi e_i\otimes e_i\Pi, \]
where the maps $d_1$ and $d_2$ are given on components by
\begin{align*}
d_1^{a \to i} \colon  \Pi e_{\tail(a)} \otimes e_{\start(a)} \Pi & \to \Pi e_i \otimes e_i \Pi \\
e_{\tail(a)} \otimes e_{\start(a)} & \mapsto a e_i \otimes e_i - e_i \otimes e_i a
\end{align*}
and 
\begin{align*}d_2^{i\to a}\colon \Pi e_i\otimes e_i\Pi & \to \Pi e_{\tail(\bar{a})}\otimes e_{\start(\bar{a})}\Pi\oplus  \Pi e_{\tail(a)}\otimes e_{\start(a)}\Pi \\
e_i\otimes e_i & \mapsto \sum_{a,\ \start(a)=i}e_i\otimes e_{\start (\bar{a})} a +\bar{a}e_{\tail (a)}\otimes e_i
\end{align*}
(cf \cite{Scho, ES98} for a non graded version).

Then one easily checks that $\Hom_{\Pi^{\rm e}}(d_2,\Pi^{\rm e})\iso d_1 (1)$, that is $\Hom_{\Pi^{e}}(P_\bullet,\Pi^{e})[2]\iso P_\bullet (1)$ in ${\rm K^b}(\gr\proj \Pi^{\rm e})$. Hence taking $H^{-2}$, one obtains $\Hom_{\Pi^{e}}(\Pi,\Pi^{e})\iso \Omega_{\Pi^{\rm e}}^3(\Pi)(1)$ in $\gr \Pi^{\rm e}$, which implies $(2)$ by Observation~\ref{obs.eq_CY} (note that $\Pi$ is Cohen-Macaulay by (1)).
\end{proof}

\subsection{The case $d=3$}

For the case $d=3$, we use a result due to Keller which shows that any $3$-preprojective algebra is given by a quiver with potential. So we start by recalling some definitions due to Derksen, Weyman and Zelevinsky \cite{DWZ08}.

\begin{definition}
Let $Q$ be a quiver, and $W$ a potential, that is a (possibly infinite) linear combination of cycles in $Q$. Then the associated \emph{Jacobian algebra} is
\[ {\rm J}=\Jac(Q, W) = \widehat{kQ} / (\partial_a W \mid \varphi \in Q_1), \]
where $\widehat{kQ}$ is the completion of path algebra $kQ$, and $\partial_a$ is the unique linear map such that $\partial_a p = \sum_{p = uav} vu$ for a path $p$.
\end{definition}

\begin{observation}
Let $(Q, W)$ be a quiver with potential. We have
\begin{align*}
& kQ_0 = {\rm J} / \Rad {\rm J} \text{, and} \\
& kQ_1 = \text{the $kQ_0 \otimes kQ_0^{\op}$-module generated by the arrows of $Q$.}
\end{align*}
The complex
 \begin{equation}\label{eq.rel_ar_ver}
\bigoplus_{a \in Q_1} {\rm J} e_{\start(a)} \otimes e_{\tail(a)} {\rm J} \to^{d_2}  \bigoplus_{a \in Q_1} {\rm J} e_{\tail(a)} \otimes e_{\start(a)} {\rm J} \to^{d_1} \bigoplus_{i \in Q_0} {\rm J} e_i \otimes e_i {\rm J}
\end{equation}
is the beginning of a projective resolution of ${\rm J}$ as ${\rm J}^{\rm e}$-module.

Here the maps are given on components by
\begin{align*}
d_1^{a \to i} \colon  {\rm J} e_{\tail(a)} \otimes e_{\start(a)} {\rm J} & \to {\rm J} e_i \otimes e_i {\rm J} \\
e_{\tail(a)} \otimes e_{\start(a)} & \mapsto a e_i \otimes e_i - e_i \otimes e_i a
\end{align*}
and $d_2^{a \to b} = \partial_{a,b} W$, where for a cyclic path $p$ we define
\begin{align*}
\partial_{a,b}(p) \colon  {\rm J} e_{\start(a)} \otimes e_{\tail(a)} {\rm J} & \to {\rm J} e_{\tail(b)} \otimes e_{\start(b)} {\rm J} \\
e_{\start(a)} \otimes e_{\tail(a)} & \mapsto \sum_{p = u_1 a u_2 b u_3} e_{\start(a)} u_2 e_{\tail(b)} \otimes e_{\start(b)} u_3 u_1 e_{\tail(a)} \\ & \qquad + \sum_{p = u_1 b u_2 a u_3} e_{\start(a)} u_3 u_1 e_{\tail(b)} \otimes e_{\start(b)} u_2 e_{\tail(a)}
\end{align*}
\end{observation}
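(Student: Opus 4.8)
The plan is to realise \eqref{eq.rel_ar_ver} as the start of the standard bimodule resolution of a quotient of a complete path algebra, and then to plug in the particular shape of the Jacobian relations $\partial_aW$. Write $A=\widehat{kQ}$, $E=kQ_0$, let $\mathfrak m\subseteq A$ be the arrow ideal, and $I=\overline{(\,\partial_aW\mid a\in Q_1\,)}$, so that ${\rm J}=A/I$. By the reduction theorem of \cite{DWZ08} we may assume $W$ is reduced (involves no cycle of length $\leq 2$); then $\partial_aW\in\mathfrak m^2$ for all $a$, hence $I\subseteq\mathfrak m^2$. The first two assertions are then immediate: $\Rad{\rm J}=\mathfrak m/I$, so ${\rm J}/\Rad{\rm J}=A/\mathfrak m=kQ_0$, and $\Rad{\rm J}/\Rad^2{\rm J}=\mathfrak m/\mathfrak m^2=kQ_1$ with its evident $kQ_0$-bimodule structure, generated by the classes of the arrows.

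For the complex, set $P_0={\rm J}\otimes_E{\rm J}=\bigoplus_i{\rm J}e_i\otimes e_i{\rm J}$ with multiplication $\mu\colon P_0\to{\rm J}$, and $P_1={\rm J}\otimes_E kQ_1\otimes_E{\rm J}=\bigoplus_a{\rm J}e_{\tail(a)}\otimes e_{\start(a)}{\rm J}$, with $d_1(x\otimes a\otimes y)=xa\otimes y-x\otimes ay$, which is the displayed $d_1$. Exactness at $P_0$ is classical: $\ker\mu$ is the bimodule $\Omega^1_{{\rm J}/E}$ of relative noncommutative differentials, it is generated as a ${\rm J}$-bimodule by the elements $a\otimes 1-1\otimes a$ ($a\in Q_1$; the idempotents contribute $0$), and $d_1$ surjects onto it, so $\im d_1=\ker\mu$. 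For exactness at $P_1$ I would use the noncommutative analogue of the conormal (second fundamental) exact sequence attached to $E\to A\to{\rm J}$: since $A$ is free over $E$ one has ${\rm J}\otimes_A\Omega^1_{A/E}\otimes_A{\rm J}=P_1$, and the sequence
\[ I/\overline{I^2}\xrightarrow{\delta}P_1\xrightarrow{d_1}\Omega^1_{{\rm J}/E}\to 0,\qquad \delta(\overline r)=(dr)\bmod I, \]
is exact, so $\ker d_1=\im\delta$. Since $I$ is the closed two-sided ideal generated by the $\partial_aW$, the ${\rm J}$-bimodule $I/\overline{I^2}$ is generated by the classes $\overline{\partial_aW}$, and $\partial_aW$ lies in $e_{\start(a)}{\rm J}e_{\tail(a)}$ (this dictates the source/target placement of the $P_2$-summands). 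Hence there is a surjection $P_2=\bigoplus_a{\rm J}e_{\start(a)}\otimes e_{\tail(a)}{\rm J}\twoheadrightarrow I/\overline{I^2}$, and composing with $\delta$ gives $d_2\colon P_2\to P_1$ with $\im d_2=\im\delta=\ker d_1$. Finally one identifies $d_2$ with the stated formula: writing $d\colon A\to A\otimes_E kQ_1\otimes_E A$, $c_1\cdots c_n\mapsto\sum_ic_1\cdots c_{i-1}\otimes c_i\otimes c_{i+1}\cdots c_n$, for the universal $E$-derivation, $d_2$ sends the generator of the $a$-summand to the image of $d(\partial_aW)$ in $P_1$; unfolding $\partial_aW=\sum_p\sum_{p=uav}vu$ (sum over the cyclic paths $p$ occurring in $W$) and sorting the occurrences of each $b\in Q_1$ inside $vu$ according to whether they lie in $v$ or in $u$ --- equivalently, after or before the marked $a$ within $p$ --- reproduces precisely the two sums defining $\partial_{a,b}W$.

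The main difficulty will be bookkeeping rather than anything structural. First, one must make the noncommutative conormal sequence rigorous in the \emph{completed} setting: work with $\mathfrak m$-adically completed differentials and the closure $\overline{I^2}$, and use that ${\rm J}$ is finite dimensional so that all bimodules in sight are finitely generated. Secondly, checking that $\delta$ composed with the surjection $P_2\twoheadrightarrow I/\overline{I^2}$ is $\partial_{a,b}W$ forces one to fix the path-composition convention once and for all, so that $d_1$ has the displayed form and $\partial_aW\in e_{\start(a)}{\rm J}e_{\tail(a)}$; with that in place the matching is routine, and as a sanity check $d_1d_2=0$ comes out directly, since $d_1(d(c_1\cdots c_n))$ telescopes to $(c_1\cdots c_n)\otimes 1-1\otimes(c_1\cdots c_n)$, which vanishes in $P_0$ because every term of $\partial_aW$ lies in $I$. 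A shorter route, if one prefers to cite rather than prove, is to read \eqref{eq.rel_ar_ver} off the bottom three terms of Ginzburg's self-dual bimodule resolution of the completed Ginzburg dg algebra $\Gamma(Q,W)$, whose zeroth cohomology is ${\rm J}$; the argument above keeps things self-contained. Note that we do not claim $d_2$ is injective --- ``beginning of a projective resolution'' only requires exactness at $P_0$ and $P_1$ together with $\mu$ surjective --- so no analysis of higher syzygies is needed here.
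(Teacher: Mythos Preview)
The paper offers no proof of this observation; it is recorded as a known fact (the complex is the bottom of Ginzburg's bimodule resolution of $\Jac(Q,W)$, and \cite{Gin06} is in the bibliography). Your argument is correct and gives a self-contained justification where the paper gives none.

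Your route via the relative K\"ahler differentials and the noncommutative conormal sequence for $E\to \widehat{kQ}\to {\rm J}$ is the natural one: exactness at $P_0$ is the standard identification $\ker\mu=\Omega^1_{{\rm J}/E}$, exactness at $P_1$ comes from $I/\overline{I^2}\to {\rm J}\otimes_A\Omega^1_{A/E}\otimes_A{\rm J}\to \Omega^1_{{\rm J}/E}\to 0$, and the explicit description of $d_2$ then drops out of expanding $d(\partial_aW)$ and sorting occurrences of $b$ according to their position relative to the marked $a$ in each cycle of $W$. Your caveats are the right ones: the reduction to a reduced potential is only needed for the first two assertions (so that $I\subseteq\mathfrak m^2$), the completion issues are harmless once one knows ${\rm J}$ is finite dimensional (which is the only case the paper uses), and you correctly note that ``beginning of a projective resolution'' does not ask for injectivity of $d_2$. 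The shortcut you mention --- reading \eqref{eq.rel_ar_ver} off the degree-zero part of the Ginzburg dg resolution --- is presumably what the paper has in mind, but your direct verification is cleaner for a reader who does not want to import that machinery.
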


\begin{observation} \label{obs.check_reversed}
Let $\Pi$ be a finite dimensional algebra, and let $\{e_i \mid i \in \{1, \ldots, n\}\}$ be a complete set of idempotents. Then
\begin{align*}
\Pi e_i \otimes e_j \Pi & \to \Hom_{\Pi^{\rm e}}(\Pi e_j \otimes e_i \Pi, \Pi^{\rm e}) \\
a_1 e_i \otimes e_j a_2 & \mapsto \left[ b_1 e_j \otimes e_i b_2 \mapsto b_1 e_j a_2 \otimes a_1 e_i b_2 \right]
\end{align*}
is an isomorphism for any $i, j \in \{1, \ldots, n\}$.
\end{observation}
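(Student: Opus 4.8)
This observation is the explicit form of the standard identification of the $\Pi^{\rm e}$-dual of an indecomposable projective $\Pi$-bimodule; its role is to allow one to compute $\Hom_{\Pi^{\rm e}}(-,\Pi^{\rm e})$ termwise on the resolution~\eqref{eq.rel_ar_ver}, just as in the case $d=2$. My plan is to factor the displayed map as an evaluation isomorphism followed by a flip.

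First I would identify $\Pi e_j \otimes e_i \Pi$ with the projective left $\Pi^{\rm e}$-module $\Pi^{\rm e}(e_j \otimes e_i)$, where $e_j \otimes e_i$ is the corresponding idempotent of $\Pi^{\rm e}=\Pi\otimes\Pi^{\rm op}$, acting by $x\cdot(e_j\otimes e_i)\cdot y = xe_j\otimes e_iy$. For any $\Pi$-bimodule $N$, evaluation at this generator then gives a natural bijection $\Hom_{\Pi^{\rm e}}(\Pi e_j \otimes e_i \Pi, N)\iso e_jNe_i$: a bimodule map $f$ is recovered from $v:=f(e_j\otimes e_i)\in e_jNe_i$ by $f(b_1e_j\otimes e_ib_2)=b_1vb_2$, and conversely every $v\in e_jNe_i$ defines such an $f$ (well defined because $v=e_jve_i$). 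Applying this with $N=\Pi^{\rm e}$ carrying its outer bimodule structure, for which $e_j\Pi^{\rm e}e_i=e_j\Pi\otimes\Pi e_i$, yields $\Hom_{\Pi^{\rm e}}(\Pi e_j\otimes e_i\Pi,\Pi^{\rm e})\iso e_j\Pi\otimes\Pi e_i$.

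Then I would compose the inverse of this evaluation bijection with the $k$-linear flip $\Pi e_i\otimes e_j\Pi\iso e_j\Pi\otimes\Pi e_i$, $a_1e_i\otimes e_ja_2\mapsto e_ja_2\otimes a_1e_i$. Tracing through the definitions, the composite sends $a_1e_i\otimes e_ja_2$ to the bimodule map determined on the generator by $e_j\otimes e_i\mapsto e_ja_2\otimes a_1e_i$, i.e. to $\bigl[\,b_1e_j\otimes e_ib_2\mapsto b_1e_ja_2\otimes a_1e_ib_2\,\bigr]$ --- precisely the map of the statement. As a composite of bijections it is bijective. To upgrade this to an isomorphism of $\Pi$-bimodules I would transport the standard bimodule structure on $\Hom_{\Pi^{\rm e}}(-,\Pi^{\rm e})$ (the one coming from the second tensor factor of $\Pi^{\rm e}$) across the evaluation isomorphism and check that the flip intertwines it with the usual $\Pi$-bimodule structure on $\Pi e_i\otimes e_j\Pi$.

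I do not expect any real obstacle: the only thing requiring care is keeping the two $\Pi^{\rm e}$-module structures on $\Pi^{\rm e}$ apart, and remembering that dualizing $\Pi e_j\otimes e_i\Pi$ interchanges the two idempotent labels --- which is exactly the source of the $i\leftrightarrow j$ swap in the statement.
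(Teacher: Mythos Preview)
Your argument is correct; the paper states this observation without proof, so there is nothing to compare against beyond noting that your evaluation-plus-flip factorisation is exactly the standard justification one would expect.
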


\begin{lemma} \label{lemma.QP_selfdual}
Let $(Q, W)$ be a quiver with potential. Let $a$ and $b$ be two arrows of $Q$.

Then, with the identification of Observation~\ref{obs.check_reversed},
\[ \Hom_{{\rm J}^{\rm e}}(\partial_{a,b} W, {\rm J}^{\rm e}) = \partial_{b,a} W. \]
\end{lemma}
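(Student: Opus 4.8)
The statement is a purely combinatorial identity comparing two maps between projective $\mathrm{J}^{\rm e}$-modules, so the plan is to compute both sides explicitly on generators and match the resulting sums term by term. First I would fix the identifications: by Observation~\ref{obs.check_reversed}, $\mathrm{J}e_{\tail(b)}\otimes e_{\start(b)}\mathrm{J}\iso \Hom_{\mathrm{J}^{\rm e}}(\mathrm{J}e_{\start(b)}\otimes e_{\tail(b)}\mathrm{J},\mathrm{J}^{\rm e})$ and likewise with $a$ in place of $b$. So $\Hom_{\mathrm{J}^{\rm e}}(\partial_{a,b}W,\mathrm{J}^{\rm e})$ becomes a map $\mathrm{J}e_{\tail(a)}\otimes e_{\start(a)}\mathrm{J}\to \mathrm{J}e_{\tail(b)}\otimes e_{\start(b)}\mathrm{J}$, i.e.\ it lives in the same space as $\partial_{b,a}W$, and the claim makes sense. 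Then I would take the generator $e_{\start(a)}\otimes e_{\tail(a)}$ (or rather the corresponding element of the Hom-space) and push it through, unwinding the definition of $\partial_{a,b}W$ from the displayed formula together with the dualization rule $a_1e_i\otimes e_ja_2\mapsto[b_1e_j\otimes e_ib_2\mapsto b_1e_ja_2\otimes a_1e_ib_2]$.

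The core of the computation is the following bookkeeping. Write $W=\sum$ (cycles); for each cyclic path $p$ appearing in $W$, the map $\partial_{a,b}(p)$ is a sum over the ways to factor $p$ as $u_1 a u_2 b u_3$ (contributing $e_{\start(a)}u_2 e_{\tail(b)}\otimes e_{\start(b)}u_3u_1 e_{\tail(a)}$) plus a sum over factorizations $p=u_1 b u_2 a u_3$ (contributing $e_{\start(a)}u_3u_1 e_{\tail(b)}\otimes e_{\start(b)}u_2 e_{\tail(a)}$). Dualizing exchanges the two tensor factors and swaps the roles of $a$ and $b$ in the indexing: a factorization $p=u_1au_2bu_3$ of the first type for $\partial_{a,b}$ is literally a factorization of the second type ($p=u_1' b u_2' a u_3'$ with $u_1'=u_1,u_2'=u_2,u_3'=u_3$) for $\partial_{b,a}$, and one checks that the term it produces in $\Hom(\partial_{a,b}W,\mathrm{J}^{\rm e})$ matches exactly the term it produces in $\partial_{b,a}W$; the second type of factorization for $\partial_{a,b}$ matches the first type for $\partial_{b,a}$ in the same way. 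Since both maps are determined by their value on the single generator (by $\mathrm{J}^{\rm e}$-linearity), matching these term-by-term finishes the proof.

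The main obstacle — such as it is — is purely notational: keeping the cyclic-ordering conventions straight (which subword becomes the ``$\otimes$'' middle piece $u_2$ and which becomes the ``wrap-around'' piece $u_3u_1$), and making sure that when one dualizes via Observation~\ref{obs.check_reversed} the idempotents $e_{\start},e_{\tail}$ land on the correct sides so that the two displayed sums for $\partial_{b,a}W$ are reproduced with the right signs of ordering (there are no minus signs here, but the cyclic order must be respected). I would organize the verification by processing one cyclic summand $p$ of $W$ at a time and, within it, one factorization at a time, so that the bijection between factorizations-for-$(a,b)$ and factorizations-for-$(b,a)$ is transparent; symmetry of the construction under $a\leftrightarrow b$ combined with the tensor-swap built into Observation~\ref{obs.check_reversed} then makes the identity essentially immediate. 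I do not expect any conceptual difficulty, only the need to write the elementary tensors carefully.
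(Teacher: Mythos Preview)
Your proposal is correct and follows exactly the approach the paper takes: reduce by linearity to a single cyclic path $p$, then verify the identity $i \circ \partial_{b,a}(p) = \Hom_{{\rm J}^{\rm e}}(\partial_{a,b}(p), {\rm J}^{\rm e}) \circ i$ (with $i$ the isomorphism of Observation~\ref{obs.check_reversed}) by a direct comparison of the elementary tensors. The paper in fact gives less detail than you do, simply declaring the verification a ``straight-forward calculation''; your term-by-term bijection between the two types of factorizations is precisely what that calculation amounts to.
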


\begin{proof}
It is enough to check this for a cyclic path $p$. More precisely we have to check that
\[ i \circ \partial_{b,a}(p) = \Hom_{{\rm J}^{\rm e}}(\partial_{a,b}(p), {\rm J}^{\rm e}) \circ i. \]
where $i$ is the isomorphism of Observation~\ref{obs.check_reversed}. This can be verified by a straight-forward calculation. 
\end{proof}

There is a link between $3$-preprojective algebras and Jacobian algebras given by the following result.

\begin{theorem}\cite[Theorem6.12 a)]{Kel11}\label{keller}
Let $\Lambda$ be a basic finite dimensional algebra of global dimension $\leq 2$. Let $Q$ be the quiver of $\Lambda$, and let $R$ be a minimal set of relations, such that $\Lambda \iso kQ / (R)$ and such that $R$ is the disjoint union of sets representing a basis of the $\Ext_{\Lambda}^2$-space between any two simple $\Lambda$-modules.

Then there is an isomorphism $\Ten_{\Lambda} \Ext_{\Lambda}^2(D \Lambda, \Lambda) \iso \Jac (\bar{Q}, W)$, where $\bar{Q}$ is obtained by adding to $Q$ an arrow $a_r:t(r)\to s(r)$ for each $r \in R$, and $W = \sum_{r \in R} a_r r$. The grading on $\Ten_{\Lambda} \Ext_{\Lambda}^2(D \Lambda, \Lambda)$ is given by the arrows of $Q$ having degree $0$, and the arrows corresponding to the relations having degree $1$.
\end{theorem}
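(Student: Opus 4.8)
The plan is to present both $\Ten_{\Lambda}\Ext^2_\Lambda(D\Lambda,\Lambda)$ and $\Jac(\bar Q,W)$ as the quotient of the path algebra of $\bar Q$ by one and the same two-sided ideal, so that the identity on arrows furnishes the isomorphism. Both are graded with the arrows of $Q$ in degree $0$ (where $\Lambda=kQ/(R)$ sits) and the arrows $a_r$ in degree $1$, so the task is purely to match the relations.

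First I would write down the start of the minimal projective bimodule resolution of $\Lambda$: since $\gldim\Lambda\leq 2$ and $R$ represents a basis of the $\Ext^2$-spaces between simples, it has length two,
\[ 0\to \bigoplus_{r\in R}\Lambda e_{\tail(r)}\otimes e_{\start(r)}\Lambda \to^{d_2} \bigoplus_{a\in Q_1}\Lambda e_{\tail(a)}\otimes e_{\start(a)}\Lambda \to^{d_1} \bigoplus_{i\in Q_0}\Lambda e_i\otimes e_i\Lambda \to \Lambda\to 0, \]
with $d_1$ as in \eqref{eq.rel_ar_ver} and $d_2^{r\to a}=\sum_{r=uav}u\otimes v$ the non-commutative derivative recording how $r$ is built from the arrow $a$ (extended linearly over $R$). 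Applying $\Hom_{\Lambda^{\rm e}}(-,\Lambda^{\rm e})$ and using that the two descriptions $\Ext^2_\Lambda(D\Lambda,\Lambda)\iso\Ext^2_{\Lambda^{\rm e}}(\Lambda,\Lambda^{\rm e})$ of the defining bimodule agree ($\Lambda$ having finite global dimension, as in the introduction), Observation~\ref{obs.check_reversed} dualizes each term by swapping its tensor factors, and since the resolution stops in degree $2$ we obtain
\[ \Ext^2_\Lambda(D\Lambda,\Lambda)\iso \Cok\Bigl(\bigoplus_{a}\Lambda e_{\start(a)}\otimes e_{\tail(a)}\Lambda \to^{g} \bigoplus_{r}\Lambda e_{\start(r)}\otimes e_{\tail(r)}\Lambda\Bigr),\quad g=\Hom_{\Lambda^{\rm e}}(d_2,\Lambda^{\rm e}). \]
The target free bimodule $F$ is exactly the free $\Lambda$-bimodule on the new arrows $a_r\colon\tail(r)\to\start(r)$, so $\Ten_\Lambda F\iso k\bar Q/(R)$; and because the tensor algebra of a quotient bimodule is the tensor algebra modulo the two-sided ideal generated by the bimodule relations, $\Ten_\Lambda\Ext^2_\Lambda(D\Lambda,\Lambda)\iso k\bar Q/\bigl((R)+(\Im g)\bigr)$.

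The crux is to identify $\Im g$ with the Jacobian relations of the old arrows. Dualizing $d_2^{r\to a}$ through Observation~\ref{obs.check_reversed} swaps the tensor factors, so $g$ sends the generator of the $a$-summand to the element of $F$ whose $r$-component is $\sum_{r=uav}v\otimes u$. On the other hand, expanding the cyclic derivative of $W=\sum_r a_r r$ gives, for $a\in Q_1$, $\partial_a W=\sum_r\sum_{r=uav}v\,a_r\,u$, whose $r$-component is precisely $\sum_{r=uav}v\otimes u$ once $F=\bigoplus_r\Lambda e_{\start(r)}\otimes e_{\tail(r)}\Lambda$ is identified with the span of paths through a single $a_r$. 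Hence $g(\mathrm{gen}_a)=\partial_a W$ and $(\Im g)=(\partial_a W\mid a\in Q_1)$; together with the immediate $\partial_{a_r}W=r$, which recovers $(R)$, this exhibits $\Ten_\Lambda\Ext^2_\Lambda(D\Lambda,\Lambda)$ as $\bar Q$ modulo the full Jacobian ideal $(\partial_c W\mid c\in\bar Q_1)$, i.e.\ as $\Jac(\bar Q,W)$, with the asserted grading. The conceptual reason this matching is forced is the self-duality of the Jacobian resolution, Lemma~\ref{lemma.QP_selfdual}: $\Hom_{\Lambda^{\rm e}}(\partial_{a_r,a}W,\Lambda^{\rm e})=\partial_{a,a_r}W$, with $\partial_{a_r,a}W$ itself agreeing with the differential $d_2^{r\to a}$ above.

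I expect the main obstacle to be precisely this last matching: reconciling the orientation and index conventions of the reversing isomorphism (Observation~\ref{obs.check_reversed}) with the two cyclic summands in the definition of $\partial_{a,b}$, and checking that $d_2^{r\to a}$ really is $\partial_{a_r,a}W$ so that Lemma~\ref{lemma.QP_selfdual} applies verbatim. Two smaller points must also be dispatched. No relations are hidden in degrees $\geq 2$: this is automatic on the tensor-algebra side, and on the Jacobian side it follows from $W$ being homogeneous of degree $1$, so the Jacobian ideal is generated by the degree-$0$ relations $r=\partial_{a_r}W$ and the degree-$1$ relations $\partial_a W$ alone. Finally, the completion in the definition of $\Jac$ is harmless here, since $\Lambda$ is finite dimensional and hence each graded piece of $\Ten_\Lambda\Ext^2_\Lambda(D\Lambda,\Lambda)$ is finite dimensional, so the graded tensor algebra and the completed Jacobian algebra agree degreewise.
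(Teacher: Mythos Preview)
The paper does not prove this theorem: it is quoted from \cite[Theorem~6.12~a)]{Kel11} and used as a black box in the proof of Theorem~\ref{thm.main_QP}. There is therefore no proof in the paper to compare your argument against.

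That said, your direct approach is sound and is essentially how the result is obtained once one unwinds Keller's argument (which goes via the Ginzburg dg algebra; taking $H^0$ reduces to exactly the computation you sketch). The key steps --- presenting $\Ext^2_{\Lambda^{\rm e}}(\Lambda,\Lambda^{\rm e})$ as the cokernel of $\Hom_{\Lambda^{\rm e}}(d_2,\Lambda^{\rm e})$ from the length-two bimodule resolution, identifying $\Ten_\Lambda F$ with $k\bar Q/(R)$, passing from the tensor algebra of a quotient bimodule to the quotient of the tensor algebra by the two-sided ideal generated by the bimodule relations, and matching the dualized differential with the cyclic derivatives $\partial_a W$ --- are all correct. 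Your caution about orientation conventions in the matching is well placed but is bookkeeping rather than a genuine obstacle: once a path-composition convention is fixed, the equality of $\Hom_{\Lambda^{\rm e}}(d_2^{r\to a},\Lambda^{\rm e})$ with the $r$-component of $\partial_a W$ is a one-line verification, and $\partial_{a_r}W=r$ is immediate.

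The only point that could be tightened is the completion argument at the end. Finite-dimensionality of each graded piece of the tensor algebra is the right fact, but you should say why the completed Jacobian algebra inherits the tensor-degree grading in the first place: this is because $W$ is homogeneous of degree $1$, so the Jacobian ideal and its closure in $\widehat{k\bar Q}$ are graded; then in each fixed degree the uncompleted quotient is already finite dimensional (degree $0$ is $\Lambda$, and degree $n$ is a quotient of a finite sum of tensor products of $\Lambda$-bimodules), hence coincides with its completion.
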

 Using Keller's description of $2$-preprojective algebras as Jacobian algebras, we can prove the converse of Theorem \ref{thm.21} for the case $d=3$.
\begin{theorem} \label{thm.main_QP}
Let $\Lambda$ be a $\tau_2$-finite algebra of global dimension $\leq 2$. Let $\Pi = \Ten_{\Lambda} \Ext_{\Lambda}^2(D \Lambda, \Lambda)$ be the associated $3$-preprojective algebra. Then
\begin{enumerate}
\item $\Pi$ is Gorenstein of dimension $\leq 1$;
\item $\Pi$ is bimodule stably $(1)$-twisted $3$-Calabi-Yau;
\item $\Ext_{\Pi^{\rm e}}^j(\Pi, \Pi^{\rm e}(i)) = 0$ for all $i < 0$ and all $j>0$.
\end{enumerate}
In particular, the category $\stCM \Pi$ is $3$-Calabi-Yau.
\end{theorem}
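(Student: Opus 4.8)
The plan is to verify properties $(1)$, $(2)$, $(3)$ directly from Keller's description of $\Pi$ as the Jacobian algebra $\Jac(\bar Q, W)$ given by Theorem~\ref{keller}, by analyzing the start of the canonical bimodule resolution of a Jacobian algebra. First I would take the complex \eqref{eq.rel_ar_ver} from the Observation above and extend it on the left to a full projective bimodule resolution $P_\bullet$ of $\Pi$. In the quiver with potential $\bar Q$, there are three kinds of arrows: the original arrows of $Q$ (degree $0$), the new arrows $a_r$ (degree $1$), and the vertices (degree $0$); the potential $W=\sum_r a_r r$ is homogeneous of degree $1$. The key observation is that Keller shows (and this is the content of his "$3$-Calabi-Yau" statement) that this resolution is self-dual: applying $\Hom_{\Pi^{\rm e}}(-,\Pi^{\rm e})$ to $P_\bullet$ and using Observation~\ref{obs.check_reversed} together with Lemma~\ref{lemma.QP_selfdual} (which says $\Hom_{\Pi^{\rm e}}(\partial_{a,b}W,\Pi^{\rm e})=\partial_{b,a}W$) one gets back $P_\bullet$ itself, shifted. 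Tracking the gradings, the duality introduces a degree shift by $1$, so that $\Hom_{\Pi^{\rm e}}(P_\bullet,\Pi^{\rm e})[d+1]\iso P_\bullet(1)$ in ${\rm K^b}(\gr\proj\Pi^{\rm e})$ with $d=3$; here the $d_1$-term maps to the $d_2$-term via Lemma~\ref{lemma.QP_selfdual}, the vertex-term $\bigoplus_i \Pi e_i\otimes e_i\Pi$ is self-dual, and the left-most terms of the resolution are glued using Keller's result that $\Pi$ is bimodule $(d+1)$-Calabi-Yau when infinite-dimensional — but here we must be careful since $\Pi$ is finite-dimensional, so we only get the isomorphism after passing to the stable category $\stgrCM(\Pi^{\rm e})$.

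Concretely, the cleanest route for $(2)$ is: the first four terms of $P_\bullet$ form a complex whose dual (after the shift $[4]$ and twist $(1)$) agrees, by Observation~\ref{obs.check_reversed} and Lemma~\ref{lemma.QP_selfdual}, with the first four terms of $P_\bullet$ again. Splicing, this yields $\Hom_{\Pi^{\rm e}}(\Pi,\Pi^{\rm e})(-1)\iso\Omega^{4}_{\Pi^{\rm e}}\Pi$ up to projective summands; by Observation~\ref{obs.eq_CY}, and once we know $\Pi$ is Gorenstein of dimension $\leq 1$ so that $\Omega^4_{\Pi^{\rm e}}\Pi$ is Cohen-Macaulay (Lemma~\ref{lemma_syzygyCM}), this is precisely the statement that $\Pi$ is bimodule stably $(1)$-twisted $3$-Calabi-Yau. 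This also mirrors exactly how Theorem~\ref{thm.d=2} was proved for $d=2$, just with the middle of the resolution being the Jacobian-algebra resolution instead of the preprojective-relations one.

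For $(1)$, the Gorenstein dimension: once $(2)$ is established (or rather its bimodule-complex predecessor), one reads off that $\R\Hom_{\Pi^{\rm e}}(\Pi,\Pi^{\rm e})$ is concentrated in homological degrees $0$ and $1$ only — because the self-dual resolution terminates: the terms $\Omega^{\geq 4}$ contribute nothing to the cohomology of the dual in degrees $\geq 2$. More precisely, dualizing $P_\bullet$ and using that the complex \eqref{eq.rel_ar_ver} extended one more step is exact in the relevant spots, $\Ext^j_{\Pi^{\rm e}}(\Pi,\Pi^{\rm e})=0$ for $j\geq 2$, which gives $\pd_{\Pi^{\rm e}}\Pi\leq\ldots$ no — rather it gives $\id_{\Pi^{\rm e}}\Pi$ controlled, hence $\Pi$ Gorenstein of dimension $\leq 1$ (dimension exactly $1$ unless $\Pi$ happens to be selfinjective). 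Alternatively one can invoke Theorem~\ref{thm.gdimU}: $g = d-1+\max\{i<0\mid \Hom(\mathscr U,\mathscr U[i])\neq 0\} = 2 + \max\{i<0\mid\cdots\}$, and since $d=3$ the only possible negative index is $i=-1$, forcing $g\leq 1$ automatically — this is the slickest argument and avoids any calculation. For $(3)$, one uses the explicit grading in Keller's theorem: the resolution $P_\bullet$ has its projective bimodules generated in degrees $0$ and $1$ (vertices and $Q$-arrows in degree $0$, relation-arrows in degree $1$), so the dual complex $\Hom_{\Pi^{\rm e}}(P_\bullet,\Pi^{\rm e})$ lives in degrees $\geq -1$; twisting and truncating, $\Ext^j_{\gr\Pi^{\rm e}}(\Pi,\Pi^{\rm e}(i))$ for $i<0$ would require negative-degree contributions that aren't there. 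I expect the main obstacle to be bookkeeping: matching up which summand of the dualized resolution corresponds to which summand of the original (the $a\leftrightarrow b$ swap in Lemma~\ref{lemma.QP_selfdual} and the $e_i\otimes e_j\leftrightarrow e_j\otimes e_i$ flip in Observation~\ref{obs.check_reversed} must be threaded consistently through all three arrow-types), and correctly accounting for the single degree-$1$ twist that appears because exactly the relation-arrows sit in degree $1$ while the Calabi-Yau dimension "$d+1=4$" counts all four resolution terms. The homological-algebra content — that the resolution is self-dual — is essentially Keller's theorem; our job is just to track the grading and descend to the stable category.
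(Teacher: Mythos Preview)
Your proposal has the right architecture --- Keller's Jacobian description plus the self-duality encoded in Lemma~\ref{lemma.QP_selfdual} --- but there is a genuine gap in the argument for~(1), and this propagates because the paper's proofs of~(2) and~(3) both \emph{use}~(1).

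\textbf{The gap in (1).} Invoking Theorem~\ref{thm.gdimU} is circular: that theorem is stated ``in the setup of Theorem~\ref{thm.21}'', which already assumes conditions~(1), (2), (3). So you cannot use it to \emph{deduce}~(1). Your alternative route --- reading off $\Ext^{\geq 2}_{\Pi^{\rm e}}(\Pi,\Pi^{\rm e})=0$ from exactness of a four-term complex $P_3\to P_2\to P_1\to P_0$ --- requires you to define $d_3$ and check exactness at $P_2$, which you have not done (and which, in the finite-dimensional case, does not follow from Keller's Calabi-Yau completion result). The paper sidesteps this entirely: it cites \cite[Proposition~2.1]{KR07} for~(1), which gives the Gorenstein bound from the cluster-tilting picture, independently of any bimodule resolution.

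\textbf{On (2).} Your four-term self-dual complex approach would work, but it forces you to verify the $d_1^\vee \leftrightarrow d_3$ match and exactness at $P_2$. The paper's route is more economical: once~(1) is in hand, Lemma~\ref{lemma.omega2} reduces~(2) to showing $(\Omega^2_{\Pi^{\rm e}}\Pi)^\vee \cong \Omega^2_{\Pi^{\rm e}}\Pi(1)$, and this follows from the self-duality of $d_2$ alone (equation $\Hom_{\Pi^{\rm e}}(d_2,\Pi^{\rm e})=d_2(1)$, via Lemma~\ref{lemma.QP_selfdual}), together with the fact that $\mathrm{Coker}\,d_2$ is Cohen--Macaulay (which uses~(1)). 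No $P_3$ needed.

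\textbf{On (3).} Your grading count is too coarse: the dual of $P_1$ has summands generated in degree $-1$, so a priori $\Ext^1_{\gr\Pi^{\rm e}}(\Pi,\Pi^{\rm e}(-1))$ could be nonzero. The paper instead splits $\Omega_{\Pi^{\rm e}}\Pi = N\oplus P$ with $P$ projective generated in degree~$0$, uses~(2) to identify $\Hom_{\Pi^{\rm e}}(N,\Pi^{\rm e})\cong \Omega^3_{\Pi^{\rm e}}\Pi(1)$, and then invokes $\gldim\Lambda\leq 2$ to see that $(\Omega^3_{\Pi^{\rm e}}\Pi)_{\leq 0}=0$. This is where the actual vanishing in degree $-1$ comes from, and it is not visible from the degrees of $P_0,P_1,P_2$ alone.
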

We start by the following lemma which gives an equivalent condition for (2) in terms of the second syzygy of $\Pi$.
\begin{lemma}\label{lemma.omega2}
A finite dimensional graded algebra of Gorenstein dimension $\leq 1$ is bimodule stably $(1)$-twisted $3$-Calabi-Yau if and only if there is an isomorphism $\Hom_{\Pi^{\rm e}}(\Omega^2_{\Pi^{\rm e}}\Pi,\Pi^{\rm e})\iso \Omega_{\Pi^{\rm e}}^2\Pi(1)$ in $\gr\CM \Pi^{\rm e}$.
\end{lemma}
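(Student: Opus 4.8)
The plan is to unwind Definition~\ref{def bimod stably CY} using the same manipulation as in Observation~\ref{obs.eq_CY}, but now specialized to $d=3$ and Gorenstein dimension $g\leq 1$. Recall that $\Pi$ is bimodule stably $(1)$-twisted $3$-Calabi-Yau means exactly $\R\Hom_{\Pi^{\rm e}}(\Pi,\Pi^{\rm e})[4]\iso\Pi(1)$ in $\stgrCM(\Pi^{\rm e})$. First I would rewrite the right-hand side: since $\Pi$ has Gorenstein dimension $\leq 1$, Lemma~\ref{lemma_syzygyCM} gives that $\Omega^2_{\Pi^{\rm e}}\Pi$ is Cohen-Macaulay (any syzygy past the Gorenstein dimension is), so in $\stgrCM(\Pi^{\rm e})$ we have $\Pi(1)\iso\Omega^2_{\Pi^{\rm e}}\Pi[2](1)$, hence $\R\Hom_{\Pi^{\rm e}}(\Pi,\Pi^{\rm e})[4]\iso\Omega^2_{\Pi^{\rm e}}\Pi[2](1)$, i.e. $\R\Hom_{\Pi^{\rm e}}(\Pi,\Pi^{\rm e})[2]\iso\Omega^2_{\Pi^{\rm e}}\Pi(1)$.

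Next I would handle the left-hand side. Using the triangle $\Omega^2_{\Pi^{\rm e}}\Pi[1]\to P\to\Pi\to$, where $P$ is the first two terms of a projective bimodule resolution of $\Pi$, applying $\R\Hom_{\Pi^{\rm e}}(-,\Pi^{\rm e})$ and noting that $\R\Hom_{\Pi^{\rm e}}(P,\Pi^{\rm e})$ is a genuine complex of projectives (so zero in $\stgrCM(\Pi^{\rm e})$), we get $\R\Hom_{\Pi^{\rm e}}(\Pi,\Pi^{\rm e})\iso\R\Hom_{\Pi^{\rm e}}(\Omega^2_{\Pi^{\rm e}}\Pi,\Pi^{\rm e})[-1]$ in $\stgrCM(\Pi^{\rm e})$. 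Moreover $\Omega^2_{\Pi^{\rm e}}\Pi$ being Cohen-Macaulay means $\R\Hom_{\Pi^{\rm e}}(\Omega^2_{\Pi^{\rm e}}\Pi,\Pi^{\rm e})$ is concentrated in degree $0$, so it equals $\Hom_{\Pi^{\rm e}}(\Omega^2_{\Pi^{\rm e}}\Pi,\Pi^{\rm e})$, which is again Cohen-Macaulay. Combining, $\R\Hom_{\Pi^{\rm e}}(\Pi,\Pi^{\rm e})[2]\iso\Hom_{\Pi^{\rm e}}(\Omega^2_{\Pi^{\rm e}}\Pi,\Pi^{\rm e})[1]$ in $\stgrCM(\Pi^{\rm e})$. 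Putting the two sides together, the $(1)$-twisted $3$-Calabi-Yau condition becomes $\Hom_{\Pi^{\rm e}}(\Omega^2_{\Pi^{\rm e}}\Pi,\Pi^{\rm e})[1]\iso\Omega^2_{\Pi^{\rm e}}\Pi(1)$ in $\stgrCM(\Pi^{\rm e})$; desuspending, $\Hom_{\Pi^{\rm e}}(\Omega^2_{\Pi^{\rm e}}\Pi,\Pi^{\rm e})\iso\Omega_{\Pi^{\rm e}}^{-1}\bigl(\Omega^2_{\Pi^{\rm e}}\Pi(1)\bigr)$, and since $\Omega^2_{\Pi^{\rm e}}\Pi(1)$ is Cohen-Macaulay this cosyzygy can be taken inside the Frobenius category $\gr\CM\Pi^{\rm e}$.

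Finally I would argue that the isomorphism can be upgraded from $\stgrCM(\Pi^{\rm e})$ to an honest isomorphism in $\gr\CM\Pi^{\rm e}$, i.e. that the shift by $1$ in the grading and the (co)syzygy can be absorbed so that the statement reads $\Hom_{\Pi^{\rm e}}(\Omega^2_{\Pi^{\rm e}}\Pi,\Pi^{\rm e})\iso\Omega^2_{\Pi^{\rm e}}\Pi(1)$ in $\gr\CM\Pi^{\rm e}$. Here one uses that both $\Hom_{\Pi^{\rm e}}(\Omega^2_{\Pi^{\rm e}}\Pi,\Pi^{\rm e})$ and $\Omega^2_{\Pi^{\rm e}}\Pi(1)$ are objects of $\gr\CM\Pi^{\rm e}$ with no projective summands (an isomorphism in $\stgrCM$ between such modules lifts to an isomorphism in $\gr\CM$, up to projective summands, and one checks the projective parts match by a degree/rank count), together with the fact that in $\stgrCM(\Pi^{\rm e})$ the shift $[1]$ is $\Omega^{-1}$ and $\Omega^{-1}\Omega^2\Pi(1)\iso\Omega\Pi(1)$, and $\Omega$ of $\Omega\Pi(1)$ versus... — in short, one matches up $\Omega^2_{\Pi^{\rm e}}\Pi$ on both sides carefully. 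The main obstacle I expect is precisely this last bookkeeping: tracking projective summands and the exact degree in which they are generated (degree $0$ versus degree $1$), exactly the kind of care taken in Observation~\ref{obs.eq_CY}, so that the clean $\gr\CM$-statement in the lemma, rather than a merely stable one, is justified. For the converse direction one simply runs the chain of equivalences backwards, which is immediate since every step above is reversible.
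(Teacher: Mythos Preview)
Your approach is the same as the paper's, but there is a bookkeeping slip in the second paragraph that propagates and causes the muddled third paragraph. From the triangle $\Omega^2_{\Pi^{\rm e}}\Pi[1]\to P\to\Pi\to\Omega^2_{\Pi^{\rm e}}\Pi[2]$, applying the contravariant functor $\R\Hom_{\Pi^{\rm e}}(-,\Pi^{\rm e})$ yields the triangle
\[
\R\Hom_{\Pi^{\rm e}}(\Pi,\Pi^{\rm e})\to\R\Hom_{\Pi^{\rm e}}(P,\Pi^{\rm e})\to\R\Hom_{\Pi^{\rm e}}(\Omega^2_{\Pi^{\rm e}}\Pi,\Pi^{\rm e})[-1]\to\R\Hom_{\Pi^{\rm e}}(\Pi,\Pi^{\rm e})[1].
\]
Since the middle term is perfect, in $\stgrCM(\Pi^{\rm e})$ you get $\R\Hom_{\Pi^{\rm e}}(\Pi,\Pi^{\rm e})\iso\R\Hom_{\Pi^{\rm e}}(\Omega^2_{\Pi^{\rm e}}\Pi,\Pi^{\rm e})[-2]$, not $[-1]$. (Equivalently and more directly: $\Pi[-2]\iso\Omega^2_{\Pi^{\rm e}}\Pi$ in $\stgrCM(\Pi^{\rm e})$, so $\R\Hom_{\Pi^{\rm e}}(\Pi,\Pi^{\rm e})[2]=\R\Hom_{\Pi^{\rm e}}(\Pi[-2],\Pi^{\rm e})\iso\Hom_{\Pi^{\rm e}}(\Omega^2_{\Pi^{\rm e}}\Pi,\Pi^{\rm e})$.) With the correct shift, your two computations combine to give precisely
\[
\Hom_{\Pi^{\rm e}}(\Omega^2_{\Pi^{\rm e}}\Pi,\Pi^{\rm e})\iso\Omega^2_{\Pi^{\rm e}}\Pi(1)\quad\text{in }\stgrCM(\Pi^{\rm e}),
\]
with no spurious cosyzygy to absorb.

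The final step is then much cleaner than your third paragraph suggests: since the Gorenstein dimension is at most $1$, already $\Omega_{\Pi^{\rm e}}\Pi$ is Cohen--Macaulay, so taking a \emph{minimal} resolution, $\Omega^2_{\Pi^{\rm e}}\Pi$ is Cohen--Macaulay without projective summands; the same then holds for its $\Pi^{\rm e}$-dual, because $\Hom_{\Pi^{\rm e}}(-,\Pi^{\rm e})$ is a duality on $\gr\CM\Pi^{\rm e}$. Two Cohen--Macaulay modules without projective summands which are isomorphic in $\stgrCM(\Pi^{\rm e})$ are already isomorphic in $\gr\CM\Pi^{\rm e}$, and the converse is obvious. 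This is exactly the paper's argument.
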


\begin{proof}
First note that since the Gorenstein dimension of $\Pi$ is $\leq 1$, $\Omega_{\Pi^{\rm e}}\Pi$ is Cohen-Macaulay by Lemma~\ref{lemma_syzygyCM}. Therefore $\Omega^2_{\Pi^{\rm e}}(\Pi)$ is also Cohen-Macaulay and does not have any projective direct summands.
Then we observe that 
\begin{align*} & \R\Hom_{\Pi^{\rm e}}(\Pi,\Pi^{\rm e})[4]\iso \Pi (1) \ {\rm in}\ \stgrCM(\Pi^{\rm e})\\
\Longleftrightarrow\ &\R\Hom_{\Pi^{\rm e}}(\Pi[-2],\Pi^{\rm e})\iso \Pi [-2](1) \ {\rm in}\ \stgrCM(\Pi^{\rm e})\\
\Longleftrightarrow\ &\Hom_{\Pi^{\rm e}}(\Omega^2_{\Pi^{\rm e}}\Pi,\Pi^{\rm e})\iso \Omega_{\Pi^{\rm e}}^2\Pi (1) \ {\rm in}\ \stgrCM(\Pi^{\rm e})\\
\Longleftrightarrow\ &\Hom_{\Pi^{\rm e}}(\Omega^2_{\Pi^{\rm e}}\Pi,\Pi^{\rm e})\iso \Omega_{\Pi^{\rm e}}^2\Pi (1) \ {\rm in}\ \gr\CM(\Pi^{\rm e}).
\end{align*}
The last equivalence holds since both $\Hom_{\Pi^{\rm e}}(\Omega^2_{\Pi^{\rm e}}\Pi,\Pi^{\rm e})$ and $\Omega_{\Pi^{\rm e}}^2\Pi (1)$ are Cohen-Macaulay without projective summands.
\end{proof}

\begin{proof}[Proof of Theorem~\ref{thm.main_QP}]
(1) holds by \cite[Proposition 2.1]{KR07}.

By Theorem \ref{keller}, there exists a quiver with potential $(\bar{Q},W)$ with an isomorphism $\Pi\iso {\rm Jac}(\bar{Q},W)$. We consider the graded version of the exact sequence in \eqref{eq.rel_ar_ver}, and its terms by
\begin{align*}
P_2 & = \bigoplus_{a \in \bar{Q}_1} \Pi e_{\start(a)} \otimes e_{\tail(a)} \Pi (-1+{\deg }\,a), \\
P_1 & = \bigoplus_{a \in \bar{Q}_1} \Pi e_{\tail(a)} \otimes e_{\start(a)} \Pi(-{\deg}\, a), \text{ and} \\
P_0 & = \bigoplus_{i \in \bar{Q}_0} \Pi e_i \otimes e_i \Pi(0).
\end{align*}
By Lemma~\ref{lemma.QP_selfdual}, keeping track of the external grading, we have
\begin{equation}\label{d_2selfdual} \Hom_{\Pi^{\rm e}}(d_2, \Pi^{\rm e}) = d_2(1). \end{equation}

We denote $(-)^\vee=\Hom_{\Pi^{\rm e}}(-,\Pi^{\rm e})$.
Since the Gorenstein dimension of $\Pi$ is $\leq 1$, the cokernel of $d_2$ is Cohen-Macaulay, and we have an isomorphism \[ ({\rm Coker}\, d_2)^\vee\iso {\rm Ker} (d_2^\vee).\]
Using \eqref{d_2selfdual} we get the following isomorphisms
\begin{align*}
(\Omega_{\Pi^{\rm e}}^2 \Pi)^\vee \iso ({\rm Im}\, d_2)^\vee & \iso {\rm Coker} (({\rm Coker}\, d_2)^\vee\mono P_1^\vee)\\ &\iso {\rm Coker} ({\rm Ker} (d_2^\vee)\mono P_1^\vee)\\
 & \iso  {\rm Coker} ({\rm Ker} (d_2(1))\mono P_2(1))\\ & \iso {\Im}\, d_2(1) \iso \Omega^2_{\Pi^{\rm e}}\Pi(1)
\end{align*} 
Hence we get (2) by Lemma~\ref{lemma.omega2}.

\medskip
By (1), $\Ext^j_{\Pi^{\rm e}}(\Pi,\Pi^{\rm e})$ vanishes for $j\geq 2$.

Denote by $N$ the maximal summand of $\Omega_{\Pi^{\rm e}}\Pi$ without projective summands. The module $N$ is Cohen-Macaulay and we have $\Omega_{\Pi^{\rm e}}\Pi\iso N\oplus P$. The projective module $P$ is a summand of $P_1$ and the induced map $P_2\to P$ vanishes. Since the arrows of degree $1$ correspond to minimal relations of $\Lambda$, and hence also to certain minimal relations of $\Ten_{\Lambda} \Ext_{\Lambda}^2(D \Lambda, \Lambda)$, $P$ is generated in degree $0$, that is $P\in \add \Pi^{\rm e}(0)$. 

Now since $\Pi$ is bimodule stably $(1)$-twisted $3$-Calabi-Yau, we have isomorphisms
\[ \Hom_{\Pi^{\rm e}}(N,\Pi^{\rm e}) \iso \Omega_{\Pi^{\rm e}}^2 N (1)\iso \Omega^3_{\Pi^{\rm e}}\Pi(1) \ {\rm in}\ \gr\CM\Pi^{\rm e}\] The right isomorphism holds since $\Omega^3_{\Pi^{\rm e}}\Pi$ does not have any projective summands. 

Now we have 
\begin{align*}\Ext^1_{\gr\Pi^{\rm e}}(\Pi,\Pi^{\rm e}(i)) &\iso \Hom_{\gr\Pi^{\rm e}}(\Omega_{\Pi^{\rm e}}\Pi,\Pi^{\rm e}(i))\\ & \iso (\Hom_{\Pi^{\rm e}}(\Omega_{\Pi^{\rm e}}\Pi,\Pi^{\rm e}))_i\\
 & \iso (\Hom_{\Pi^{\rm e}}(N,\Pi^{\rm e}))_i\oplus(\Hom_{\Pi^{\rm e}}(P,\Pi^{\rm e}))_i\\
 & \iso (\Omega^3_{\Pi^{\rm e}}\Pi)_{i+1}\oplus (\Hom_{\Pi^{\rm e}}(P,\Pi^{\rm e}))_i\end{align*}
 Now if $i<0$, $(\Hom_{\Pi^{\rm e}}(P,\Pi^{\rm e}))_i$ clearly vanishes since $P$, hence  $\Hom_{\Pi^{\rm e}}(P,\Pi^{\rm e})$ are generated in degree $0$. Moreover since $\gldim \Lambda \leq 2$ the minimal projective resolution of $\Pi$ is generated in strictly positive degrees from position $3$ on, that is $(\Omega_{\Pi^{\rm e}}^3 \Pi)_{\leq 0}$ vanishes and thus claim (3) here holds.
\end{proof}

\subsection{The $(d-1)$-representation finite case}

\begin{theorem}\label{thm.RF}
Let $\Lambda$ be a $(d-1)$-representation finite algebra. Then its $d$-preprojective algebra $\Pi$ is finite dimensional selfinjective and stably bimodule $(1)$-twisted $d$-Calabi-Yau. In particular, $\stmod \Pi$ is $d$-Calabi-Yau.
\end{theorem}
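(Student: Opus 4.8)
The plan is to check, for $\Pi = \Pi_d(\Lambda)$, the hypotheses of Theorem~\ref{thm.21} --- of which only condition~(2) requires real work --- and then to read off the Calabi-Yau statement from Theorem~\ref{bimodule stably implies stably}. First I would dispose of the formal parts: a $(d-1)$-representation finite algebra is in particular $\tau_{d-1}$-finite, so $\Pi = \bigoplus_{p\geq 0}\tau_{d-1}^{-p}\Lambda$ is finite dimensional, and $\Pi$ is selfinjective by \cite{IO13}. Selfinjectivity is exactly ``Gorenstein of dimension $0$'', which is $\leq d-2$, so condition~(1) holds; and since $\Pi^{\rm e}$ is then selfinjective, hence injective over itself, $\Ext^j_{\gr\Pi^{\rm e}}(\Pi,\Pi^{\rm e}(i))$ vanishes for all $j>0$ and all $i$, so condition~(3) holds too (it is not logically needed for the present statement, but it places $\Pi$ squarely inside Theorem~\ref{thm.21}).

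The real point is condition~(2), that $\Pi$ is bimodule stably $(1)$-twisted $d$-Calabi-Yau. Since $\Pi$ is selfinjective, the Cohen-Macaulay replacement of Observation~\ref{obs.eq_CY} is trivial ($M=\Pi$, $K=0$), $\stgrCM(\Pi^{\rm e})=\stgr\Pi^{\rm e}$, and $\R\Hom_{\Pi^{\rm e}}(\Pi,\Pi^{\rm e})=\Hom_{\Pi^{\rm e}}(\Pi,\Pi^{\rm e})$ is concentrated in homological degree $0$. So, exactly as in the proofs of Theorems~\ref{thm.d=2} and \ref{thm.main_QP}, condition~(2) reduces to the single graded bimodule isomorphism
\[ \Hom_{\Pi^{\rm e}}(\Pi,\Pi^{\rm e}) \iso (\Omega^{d+1}_{\Pi^{\rm e}}\Pi)(1) \quad\text{in } \stgr\Pi^{\rm e}. \]
The left-hand side is accessible from the general theory of selfinjective algebras --- it is the bimodule $\Pi$ twisted by the Nakayama automorphism --- and the right-hand side is governed by the periodicity results of Dugas. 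Concretely, by \cite{Dugas} the $(d-1)$-representation finiteness of $\Lambda$ forces $\Pi$ to be twisted periodic of period $d+1$ as a $\Pi$-bimodule, so that $\Omega^{d+1}_{\Pi^{\rm e}}\Pi$ is, up to projective summands, $\Pi$ twisted by an automorphism. Reinstating the internal grading on $\Pi = \Pi_d(\Lambda)$ --- with $\tau_{d-1}^{-p}\Lambda$ in degree $p$, so that $\Pi$ is generated over $\Lambda$ in degree $1$ --- produces the internal degree-$1$ shift, exactly as the self-dualities $\Hom_{\Pi^{\rm e}}(d_2,\Pi^{\rm e})\iso d_1(1)$ and $\Hom_{\Pi^{\rm e}}(d_2,\Pi^{\rm e}) = d_2(1)$ did in the $d=2$ and $d=3$ cases; matching the two twists of $\Pi$ gives the displayed isomorphism, hence condition~(2).

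All hypotheses of Theorem~\ref{thm.21} now hold, so $\Pi$ is the $d$-preprojective algebra of $\Lambda$ (a tautology here), and by Theorem~\ref{bimodule stably implies stably} the algebra $\Pi$ is stably $(1)$-twisted $d$-Calabi-Yau. Forgetting the grading --- and using $\stmod\Pi = \stCM\Pi$, which holds since $\Pi$ is selfinjective --- this gives that $\stmod\Pi$ is a $d$-Calabi-Yau triangulated category, the ``in particular'' of the statement.

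The step I expect to be the obstacle is the comparison in the middle paragraph: Dugas's periodicity is formulated for the ungraded algebra and with a twist by algebra automorphisms whose shape has to be pinned down and, uniformly in $d$, identified with the Nakayama automorphism together with the internal grading shift $(1)$ (the Gorenstein parameter); one must also confirm that the period is genuinely $d+1$ (not a proper divisor) and that no spurious projective bimodule summands spoil the isomorphism in $\stgr\Pi^{\rm e}$. Everything else is formal once Observation~\ref{obs.eq_CY} is on the table.
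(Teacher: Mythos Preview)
Your outline agrees with the paper's: selfinjectivity from \cite{IO13}, reduction of condition~(2) via Observation~\ref{obs.eq_CY} to a single isomorphism $\Hom_{\Pi^{\rm e}}(\Pi,\Pi^{\rm e})\iso\Omega^{d+1}_{\Pi^{\rm e}}\Pi(1)$ in $\stgr\Pi^{\rm e}$, and an appeal to Dugas for the right-hand side. The gap is exactly where you predicted: ``matching the two twists'' is asserted but not carried out, and your proposed route --- identify $\Hom_{\Pi^{\rm e}}(\Pi,\Pi^{\rm e})$ as $\Pi$ twisted by the Nakayama automorphism, identify $\Omega^{d+1}_{\Pi^{\rm e}}\Pi$ as $\Pi$ twisted by Dugas's automorphism, then compare --- would require pinning down both automorphisms and the grading shift explicitly, which you do not do.

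The paper avoids this comparison of abstract automorphisms entirely. It uses Dugas's result in its functorial form on the cluster-tilting subcategory $\mathscr{U}$: there is an isomorphism $\Omega^{d+1}_{\mathscr{U}\otimes\mathscr{U}^{\rm op}}(\Hom_{\mathscr{U}}(-,-))\iso\Hom_{\mathscr{U}}(-,-[-d+1])$ up to projectives. Evaluating at $\Lambda$ and $\mathbb{S}_{d-1}^{-i}\Lambda$ and summing over $i$ computes $\Omega^{d+1}_{\Pi^{\rm e}}\Pi$ directly as $\bigoplus_i\Hom_{{\rm D^b}}(\mathbb{S}\Lambda,\mathbb{S}_{d-1}^{-i+1}\Lambda)$. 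On the other side, the key identity is $\Hom_{\Pi^{\rm e}}(\Pi,\Pi^{\rm e})\iso\Hom_{\Pi}(D\Pi,\Pi)$, which unpacks as $\bigoplus_i\Hom_{{\rm D^b}}(D\Lambda,\mathbb{S}_{d-1}^{-i}\Lambda)$. Since $\mathbb{S}\Lambda=D\Lambda$, the two direct sums agree with an index shift by $1$, which is precisely the grading twist $(-1)$. No automorphism bookkeeping is needed; the Gorenstein parameter falls out of the reindexing. Your concerns about the period being a proper divisor of $d+1$ and about stray projective summands are non-issues: the isomorphism is only needed in $\stgr\Pi^{\rm e}$, where projectives vanish, and Dugas's statement gives an isomorphism at step $d+1$ regardless of whether the minimal period is smaller.
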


\begin{proof}
Finite dimensionality and selfinjectivity are proved in \cite[Corollary~3.4]{IO13}.

\medskip 

Let $\Lambda$ be a $(d-1)$-representation-finite algebra. Then the category $\mathscr{U}$ is a $(d-1)$-cluster tilting subcategory of ${\rm D^b}(\mod\Lambda)$, which satisfies $\mathscr{U}=\mathscr{U}[d-1]$. Moreover the category ${\rm D^b}(\mod \Lambda)$ is an algebraic triangulated category, that is it is equivalent to the stable category of some Frobenius category. Hence we can apply Theorem 3.2 of \cite{Dugas} and we obtain an isomorphism
\[\Omega^{d+1}_{\mathscr{U}\otimes\mathscr{U}^{\rm op}}(\Hom_{\mathscr{U}}(-,-))\iso \Hom_{\mathscr{U}}(-,-[-d+1]) \]  in $\mod (\mathscr{U}\otimes\mathscr{U}^{\rm op})$ up to projective summands.

So we have the following isomorphisms in $\stgr\Pi^{\rm e}$: 
\begin{align*}\Omega^{d+1}_{\Pi^{\rm e}}(\Pi) &\iso \bigoplus_{i\geq 0}\Hom_{{\rm D^b}(\mod\Lambda)}(\Lambda,\mathbb{S}_{d-1}^{-i}\Lambda[-d+1])\\
 & \iso  \bigoplus_{i\geq 0}\Hom_{{\rm D^b}(\mod\Lambda)}(\mathbb{S}\Lambda,\mathbb{S}_{d-1}^{-i+1}\Lambda)
\end{align*}

On the other hand
\begin{align*}
\Hom_{\Pi^{\rm e}}(\Pi,\Pi^{\rm e}) & \iso \Hom_\Pi(D\Pi,\Pi) \\
& \iso \bigoplus_{i \in \mathbb{Z}} \Hom_{{\rm D^b}(\mod\Lambda)}(D \Lambda,\mathbb{S}_{d-1}^{-i}\Lambda).
\end{align*}

Combining these two isomorphisms, and observing that $\mathbb{S} \Lambda = D \Lambda$, we obtain
 $\Omega^{d+1}_{\Pi^{\rm e}}(\Pi)\iso\Hom_{\Pi^{\rm e}}(\Pi,\Pi^{\rm e})(-1)$ in $\stgr \Pi^{\rm e}$. Since $\Pi$ is selfinjective, $\Pi$ is Cohen-Macaulay, so $\Pi$ is bimodule stably $(1)$-twisted $d$-Calabi-Yau by Observation \ref{obs.eq_CY}. 
\end{proof}

 \begin{remark}
Let $\Lambda$ be an algebra which is $\tau_{d-1}$-finite and satisfying the vosnex property. In \cite{IO13}, the authors prove that the algebra $\Pi=\Pi_d(\Lambda)$ is Gorenstein of dimension $\leq 1$ and that the stable category $\stCM \Pi$ is $d$-Calabi-Yau (Theorem~1.2(1)). It follows from the proof (see \cite[Theorem~5.11]{IO13}) that moreover the category of graded Cohen-Macaulay modules $\stgrCM \Pi$ is $(1)$-twisted $d$-Calabi-Yau.

Unfortunately, it is not clear from the proof that this Calabi-Yau property comes from a bimodule property. So we cannot use the results in \cite{IO13} to prove a result similar to Theorem \ref{thm.RF} in the case where the algebra $\Pi$ has Gorenstein dimension $1$.
\end{remark}

\end{document}